\documentclass[11pt,reqno]{amsart}
\usepackage[T1]{fontenc}
\usepackage{lmodern}
\usepackage[margin=1in]{geometry}
\usepackage{amsmath,amssymb,mathtools}
\usepackage{mathrsfs}
\usepackage{enumitem}
\usepackage{needspace}
\usepackage{microtype}
\usepackage{xcolor}
\usepackage[unicode,colorlinks=true,linkcolor=blue!45!black,citecolor=blue!45!black,urlcolor=blue!45!black]{hyperref}
\hypersetup{pdftitle={Coanalytic Closures and Bases of Prescribed Hausdorff Dimension},pdfauthor={Yi Wang}}
\newtheorem{maintheorem}{Theorem}

\newtheorem{lemma}{Lemma}[section]
\newtheorem{theorem}[lemma]{Theorem}
\newtheorem{proposition}[lemma]{Proposition}
\newtheorem{corollary}[lemma]{Corollary}

\newtheorem*{claim*}{Claim}
\theoremstyle{definition}
\newtheorem{definition}[lemma]{Definition}
\newtheorem{convention}[lemma]{Convention}
\newtheorem{remark}[lemma]{Remark}
\newtheorem{question}[lemma]{Question}
\newcommand{\R}{\mathbb R}
\newcommand{\Q}{\mathbb Q}

\newcommand{\Z}{\mathbb Z}
\newcommand{\C}{\mathbb C}

\newcommand{\Cut}{\operatorname{Cut}}
\newcommand{\Dec}{\operatorname{Dec}}
\newcommand{\cl}{\operatorname{cl}}
\newcommand{\dcl}{\operatorname{dcl}}

\newcommand{\dimH}{\dim_{\mathrm H}}
\newcommand{\diam}{\operatorname{diam}}
\newcommand{\ran}{\operatorname{ran}}

\newcommand{\leqh}{\leq_{\mathrm h}}
\newcommand{\leqT}{\leq_{\mathrm T}}
\newcommand{\equivh}{\equiv_{\mathrm h}}

\newcommand{\Haus}{\mathcal H}

\DeclareMathOperator{\spanop}{span}
\DeclareMathOperator{\alg}{alg}

\DeclareMathOperator{\lab}{lab}
\DeclareMathOperator{\bat}{bat}
\DeclareMathOperator{\cell}{cell}
\numberwithin{equation}{section}
\setlist[enumerate]{leftmargin=*,itemsep=3pt}
\setlist[itemize]{leftmargin=*,itemsep=3pt}
\allowdisplaybreaks[1]
\title[Coanalytic closures and bases]{Coanalytic Closures and Bases\\of Prescribed Hausdorff Dimension}
\author{Yi Wang}
\address{Qiuzhen College, Tsinghua University, Beijing, China}
\subjclass[2020]{03E15, 03E45, 03C64, 28A78, 12F20}
\keywords{Coanalytic set, constructible universe, closure under partial functions, hyperarithmetic evaluation, Hausdorff dimension, subfield of the reals, real exponential field, Hamel basis, transcendence basis, pregeometry}
\date{}

\begin{document}
\begin{abstract}
Assume \(V=L\). We prove two uniform theorems producing coanalytic sets
of prescribed Hausdorff dimension in Euclidean space. The first concerns
countable families of partial operations of finite arity on \(\R^n\)
whose evaluation is hyperarithmetic and which are tame in each vector
variable separately. For such families a single lightface \(\Pi^1_1\) relation
yields strictly increasing, left-continuous chains of proper sets closed
under the operations, one of each Hausdorff dimension in \([0,n]\),
uniformly in the family and in \(n\); no exchange property of the
closure is needed. As an application, there are coanalytic proper
subfields of \(\R\) of every Hausdorff dimension in \([0,1]\). The second theorem constructs
coanalytic bases of every possible Hausdorff dimension for effectively
coded pregeometries. As an application, there are coanalytic Hamel bases of \(\R^n\)and transcendence bases of \(\C\) of every Hausdorff dimension in \([0,n]\) and \([0,2]\).
\end{abstract}
\maketitle
\tableofcontents

\section{Introduction}

Let \(n\geq1\), and let \(\mathcal F\) be a countable collection of
partial operations \((\R^n)^m\rightharpoonup\R^n\), each of some finite
arity \(m\). A set \(A\subseteq\R^n\) is closed under \(\mathcal F\) if
it contains \(f(\bar a)\) whenever \(f\in\mathcal F\), \(\bar a\) is a
tuple from \(A\), and \(f(\bar a)\) is defined. We ask which Hausdorff
dimensions a proper set of this kind can have, and how simple its
definition can be. Many familiar structures arise in this way. The field
operations give subfields of \(\R\) or of \(\C\), and adding functions
that select roots of polynomials gives real closed or algebraically
closed subfields. More generally, the definable functions of an
o-minimal expansion of the real field are closed under composition, and
the sets closed under them are the definably closed sets.

Our first main theorem answers this question under \(V=L\) for a broad
class of effectively presented families, in which the operations and
the ambient dimension \(n\) are both allowed to vary. Its hypotheses are
of two kinds, and we keep them apart. The effective hypothesis is that
the evaluation relation is hyperarithmetic; the geometric one asks that
every operation, viewed as a function of a single vector argument with
the remaining arguments fixed, be undefined, constant, or a local
bi-Lipschitz homeomorphism on an open neighborhood of a subcube of
proportional size in every cube. Under these hypotheses we obtain, for every
\(\alpha\in[0,n]\), a proper set of dimension \(\alpha\) closed under
the operations, and a single coanalytic relation defines these sets
jointly in the family, the dimension and the point. The construction
controls the whole generated closure, not merely a set of generators,
and it needs no exchange property of the closure operator.
Our second main theorem treats a related problem for bases: for suitable
effectively coded pregeometries, it constructs coanalytic bases of every
dimension allowed by the ambient Euclidean space.

The field case has a classical motivation. Erd\H{o}s and Volkmann
\cite{EV66} constructed Borel additive subgroups of \(\R\) of every
Hausdorff dimension in \([0,1]\), and Erd\H{o}s asked whether the same
dimensions could be realized by subrings or subfields
\cite[p.~119]{Erd79}; see also Problem~1154 in \cite{Blo}.
Multiplication creates a substantial obstruction at low descriptive
complexity. Edgar and Miller
\cite[Theorem~1 and the remarks on p.~1124]{EM03} proved that an analytic
subring \(S\subseteq\R\) satisfies
\[
                 \dimH S=0\quad\hbox{or}\quad S=\R,
\]
and Bourgain \cite{Bou03} independently excluded Borel subrings of
intermediate Hausdorff dimension. Under the continuum hypothesis,
Mauldin \cite{Mau16} constructed subfields of every dimension in
\([0,1]\); his final remark observes that under \(V=L\) the fields
can be chosen \(\Sigma^1_2\).

Applied to the field operations, Theorem~\ref{thm:A} gives, under
\(V=L\), a strictly increasing chain of proper subfields of \(\R\), one
of each Hausdorff dimension in \([0,1]\), all defined by a single
lightface \(\Pi^1_1\) relation (Corollary~\ref{cor:subfields}). This
improves the \(\Sigma^1_2\) bound in Mauldin's remark, and by the
theorem of Edgar and Miller coanalytic is the optimal complexity.
Richer families of operations give stronger closure properties: the
subfields can be taken to be real closed, and even to be elementary
substructures of the real exponential field
(Corollary~\ref{cor:exponential}).

For bases, the dimension question is already nontrivial without a
definability requirement. Lutz, Qi and Yu \cite{LQY24} proved in ZFC
that a Hamel basis of \(\R\) over \(\Q\) can have any Hausdorff
dimension in \([0,1]\).
Theorem~\ref{thm:B} provides coanalytic bases of every dimension under
\(V=L\), uniformly over a general family of pregeometries and the
prescribed dimension.

No Hamel basis of \(\R^n\), \(n\ge1\), over a countable subfield of
\(\R\), and no transcendence basis of \(\C\) over a countable subfield
of \(\C\), is analytic \cite[Theorems~3.2 and~5.2]{WZ12}.
Thus the coanalytic complexity is optimal for these real-field and
basis applications, apart from the countable zero-dimensional field sections.
We make no such claim for the general function families of
Theorem~\ref{thm:A}.

\begin{definition}[Coded function families]\label{def:function-family}
A coded function family consists of a lightface \(\Delta^1_1\) set
\(I\subseteq(\omega\setminus\{0\})\times2^\omega\) and, for every
\(\iota=(n,\gamma)\in I\), a countable family
\(\mathcal F_\iota=\{f_{\iota,e}:e\in\omega\}\) of partial maps
\((\R^n)^m\rightharpoonup\R^n\), \(m\geq0\), with the following
properties.
\begin{enumerate}[label=\textup{(F\arabic*)}]
\item \(\mathcal F_\iota\) contains the projections
      \((x_1,\ldots,x_m)\mapsto x_i\) and, as nullary maps, the constants
      \(q\in\Q^n\). It is closed under composition, under permutation
      and identification of variables, and under the addition of dummy
      variables. (This is a requirement on the family as a set of maps;
      indices of compositions need not be computable.)
\item The arity of \(f_{\iota,e}\) is a computable function of \(e\),
      and the evaluation relation
      \[
       \operatorname{Eval}(n,\gamma,e,\bar a,y)
       \quad\Longleftrightarrow\quad f_{\iota,e}(\bar a)=y
      \]
      is lightface \(\Delta^1_1\).
\item For every map in \(\mathcal F_\iota\) and every choice of all but
      one of its vector arguments, the resulting section
      \(h(x)=f_{\iota,e}(\bar a,x)\), \(x\in\R^n\), admits
      \(\varepsilon_h>0\) such that every compact axis-parallel cube
      \(Q\subseteq\R^n\) of positive side length contains such a cube
      \(Q'\) satisfying \(\ell(Q')\geq\varepsilon_h\ell(Q)\) and
      admitting an open neighborhood \(V\) with one of the following
      properties:
      \begin{enumerate}[label=\textup{(\alph*)}]
      \item \(h\) is undefined throughout \(V\);
      \item \(h\) is defined on all of \(V\) and \(h|_V\) is constant;
      \item \(h\) is defined on all of \(V\) and \(h|_V\) is locally
            a bi-Lipschitz homeomorphism onto an open set.
      \end{enumerate}
      Here \(\ell(Q)\) denotes the side length of \(Q\).
\end{enumerate}
Neither the constants \(\varepsilon_h\) nor the neighborhoods are
required to be effective. Define
\[
 \cl_\iota(A)=
 \{f(\bar a):f\in\mathcal F_\iota,\ \bar a\in A^{<\omega},\
                       f(\bar a)\text{ is defined}\}.
\]
The family may be given relative to a fixed real parameter \(\delta\),
in which case the effective requirements are relativized to \(\delta\).
\end{definition}

\begin{maintheorem}[Uniform coanalytic closures]\label{thm:A}
Assume \(V=L\), and let \((\mathcal F_\iota)_{\iota\in I}\) be a
coded function family. There is a lightface \(\Pi^1_1\) set
\[
 G\subseteq
 \{(n,\gamma,\alpha,y):(n,\gamma)\in I,\ 0\leq\alpha\leq n,\
                                             y\in\R^n\}
\]
such that for every \(\iota=(n,\gamma)\in I\) the sections
\[
 A_\iota(\alpha)=\{y\in\R^n:(n,\gamma,\alpha,y)\in G\},
 \qquad 0\leq\alpha\leq n,
\]
have the following properties.
\begin{enumerate}[label=\textup{(\arabic*)}]
\item Each \(A_\iota(\alpha)\) is a proper subset of \(\R^n\) closed
      under \(\mathcal F_\iota\), and \(A_\iota(0)=\cl_\iota(\varnothing)\).
\item \(\dimH A_\iota(\alpha)=\alpha\).
\item \(A_\iota(\alpha)\subsetneq A_\iota(\beta)\) whenever
      \(\alpha<\beta\), and
      \[
       A_\iota(\alpha)=
       \bigcup_{\substack{r\in\Q\\0<r<\alpha}}A_\iota(r)
       \qquad(\alpha>0).
      \]
\end{enumerate}
Since \(G\) is lightface, each \(A_\iota(\alpha)\) is
\(\Pi^1_1(\gamma,\alpha)\). If the family is given relative to a real
parameter \(\delta\), the same holds with \(G\in\Pi^1_1(\delta)\).
\end{maintheorem}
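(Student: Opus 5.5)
The plan is to combine a transfinite recursion of length \(\omega_1\) along the canonical \(\Delta^1_2\) well-order of \(\R\) in \(L\) with the standard Miller / Vidnyánszky technique for obtaining coanalytic sets. The construction is uniform in \(\iota\) and \(\alpha\). Fix, for each \(\alpha\in(0,n]\), a compact set \(K_\alpha\subseteq\R^n\) with \(\dimH K_\alpha=\alpha\) and \(\Haus^\alpha(K_\alpha\cap U)>0\) for every open \(U\) meeting it, built effectively from \(\alpha\) (for instance a homogeneous Cantor-type set). We will build countable-stage sets \(A_{\iota,\xi}(\alpha)\), \(\xi<\omega_1\), all closed under \(\mathcal F_\iota\) and increasing in \(\xi\).

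The final sets must satisfy two requirements. First, \(\dimH\le\alpha\): we arrange \(A_\iota(\alpha)\subseteq \cl_\iota(\varnothing)\cup\bigcup_{r<\alpha,\,r\in\Q}\cl_\iota(\text{generators in } F_r)\), where each \(F_r\) is a countable union of images of \(K_r\)-like pieces. The upper bound \(\dimH\cl_\iota(F)\le\dimH F\) requires care. A countable closure is harmless, but a closure of an uncountable set is not automatically small. Hence the generators added at stage \(\xi\) are taken from \(K_r\) in \emph{general position} relative to the countable model \(L_{\delta_\xi}\) containing the previous stage. Second, \(\dimH\ge\alpha\) and properness: at stage \(\xi\) we add a new generator \(x_\xi^r\in K_r\) for each rational \(r\), and we also diagonalize against the \(\xi\)-th real \(z_\xi\), which is a code for a point that must be excluded, so the sets stay proper. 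Most importantly, the generators must be \emph{dense enough} in \(K_r\) that the union has dimension \(\ge r\). We achieve this by choosing, at each stage, points in every rational cube meeting \(K_r\), and by making the final set contain a perfect-like subset of \(K_r\) up to a countable set, via a fusion.

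The key step, which I expect to be the main obstacle, is the \emph{genericity lemma}. Let \(M\) be a countable set closed under \(\mathcal F_\iota\) and let \(Q\) be a cube meeting \(K_r\). Then there is a point \(x\in K_r\cap Q\) such that \(\cl_\iota(M\cup\{x\})\) avoids a given countable set \(Z\) of forbidden points, and such that adding \(x\) does not enlarge the dimension beyond \(r\). Hypothesis (F3) is tailored for this. For each \(f\) and each tuple from \(M\), the section \(h\) in the single new variable \(x\) (after identifying variables via (F1), the multiple occurrences of \(x\) reduce to one variable) is, on a subcube of proportional size, either undefined, constant, or locally bi-Lipschitz. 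In the constant case the value already lies in \(\cl_\iota(M)=M\), since the constant equals \(h(q)\) for a rational \(q\)... (more precisely, a value attained at points of \(M\) densely). In the bi-Lipschitz case, the preimage of each \(z\in Z\) is locally a single point, hence countable. The proportional-size condition lets us run a Cantor-scheme fusion inside \(K_r\): at each level we pass to subcubes of side \(\ge\varepsilon_h\ell(Q)\), which loses only a fixed proportion. Because \(K_r\) is homogeneous, the \(r\)-dimensional mass is preserved along the fusion, up to a factor that can be handled by iterating countably many requirements with shrinking losses. Thus the bad set of \(x\) (those hitting \(Z\)) is countable and meager in \(K_r\). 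Upper bounds for the dimension follow because bi-Lipschitz images preserve dimension and every element of the closure is a bi-Lipschitz image of new generators on pieces. Here the exchange property is never used, since we only control the single new variable, one at a time, with \(M\) fixed.

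Finally we address definability and the chain properties. We define \(G\) by saying that \(y\) lies in the section iff there exists a countable \(L_\delta\) coded by a real hyperarithmetic in \((\iota,\alpha,y)\), containing the construction up to the stage where \(y\) appears. The witness choice is made \(\le_L\)-least, so that every point \(y\) appears by a stage whose code is \(\Delta^1_1(y)\). This uses the usual self-constructibility trick: each generator is chosen to encode the current stage, via the Vidnyánszky coding lemma applied inside the perfect fusion. By Spector–Gandy, the existential quantifier over hyperarithmetic reals together with the \(\Delta^1_1\) evaluation of (F2) gives a \(\Pi^1_1\) relation. Monotonicity and left-continuity hold because \(A_\iota(\alpha)\) is defined to be the union of the \(A_\iota(r)\) for rational \(r<\alpha\), and the rational stages are nested by construction. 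Strictness comes from dimensions. We set \(A_\iota(0)=\cl_\iota(\varnothing)\).
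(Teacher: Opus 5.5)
Your proposal has a genuine gap in the step you treat as routine: the upper bound $\dimH A_\iota(\alpha)\le\alpha$. At a single stage you control only countably many sets $h(K_r\cap P)$, one for each section $h=f(\bar a,\cdot)$ with $\bar a$ from the countable set $M$, and each has dimension at most $r$. But $A_\iota(\alpha)$ is the union of these over all $\omega_1=2^{\aleph_0}$ stages. Hausdorff dimension is only countably stable, so ``bi-Lipschitz images preserve dimension'' says nothing about the final set. In the field case, the sums $z_\xi+z_\eta$ of generators already range over an uncountable subset of $K_r+K_r$, which typically has dimension $\min\{2r,1\}$. Choosing generators in general position over countable models spreads such values out rather than confining them.

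What is missing is a single set, fixed before the recursion and of dimension at most $r$, into which every new value at every stage is forced. The paper uses the approximation set $E_n(r)$: the points lying within $2^{-\lceil nN/r\rceil}$ of the lattice $2^{-N}\Z^n$ for infinitely many $N$. At each stage it builds a fresh Cantor set $K$ (Proposition~\ref{prop:general-cantor}) by pulling lattice windows back through the local bi-Lipschitz branches of all sections over earlier generators. This gives $h(z)\in E_n(r)$ whenever $h(z)$ is new, while $\dimH K=r$. Hence $A_\iota(\alpha)\subseteq\cl_\iota(\varnothing)\cup\bigcup_{r<\alpha}E_n(r)$, a countable union.

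A fixed $K_r$ could not be used anyway. (F3) only provides a good subcube somewhere in each cube, and for a nowhere dense $K_r$ that subcube may lie in a gap. For instance, if $K_r$ is porous, a section whose domain is exactly $K_r$ satisfies (QT) through alternative (a), yet no alternative holds at any point of $K_r$. So the Cantor set must be built around the trimmed cubes.

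Two further steps also fail as stated. For the lower bound, density of the generators in rational cubes gives nothing, since dense sets can have dimension $0$. Making $A_\iota(\alpha)$ contain a perfect subset of $K_r$ up to a countable set is impossible for subfields. Such a set would contain a compact set of positive dimension, and the $\sigma$-compact ring it generates would be $\R$ by Edgar--Miller. The paper instead diagonalizes against coded covers: for every code $q$, some generator of tag $r_q^n$ lies outside the $G_\delta$ set $N_n(q)$, whose dimension is below $r_q^n$.

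For definability, encoding the stage into the generators is not enough. Your Spector--Gandy definition needs every element of the closure, such as $y=f(\bar a,z)$, to compute a witness hyperarithmetically. The paper does three things here:
\begin{itemize}
\item it codes the whole history into the image values themselves, $c\leqT\Cut(h(z))'$;
\item it recovers $z$ from $y$ and $\bar a$ using local injectivity of the section and its $\Delta^1_1$ graph;
\item it imposes this over every finite tuple $\bar a$ of earlier generators, not just over the earlier closure $M$.
\end{itemize}
The last point matters because a value generated by tags below $\alpha$ may already lie in the closure of earlier generators of larger tag. In that case only the minimal-support argument of Lemma~\ref{cl:finite-support} gives a witness hyperarithmetic in $y$.
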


The set \(A_\iota(0)=\cl_\iota(\varnothing)\) is countable and contains
\(\Q^n\), and by (2) it is the only countable member of the chain. If
every \(A_\iota(\alpha)\) is to contain a prescribed countable set of
parameters, it suffices to add these parameters to the family as
nullary maps.

Condition (F3) is the only geometric hypothesis. In dimension one it
holds for a partial map \(h\) whenever \(\R\) can be partitioned into
finitely many points and open intervals on each of which \(h\) is
undefined, constant, or \(C^1\) with nowhere vanishing derivative; this
is the case for rational functions and, more generally, for the
definable functions of an o-minimal expansion of the real field, see
Lemma~\ref{lem:one-variable} and the proof of
Corollary~\ref{cor:exponential}. In any dimension a
\(C^1\) map with nonsingular derivative is locally a bi-Lipschitz
homeomorphism onto an open set, by the inverse function theorem, but differentiability
is not part of the hypotheses. Note also that \textup{(a)--(c)} concern the
whole vector variable \(x\in\R^n\): a section of intermediate rank, such as
\((x_1,x_2)\mapsto(x_1,0)\) on \(\R^2\), is neither locally constant nor
locally invertible, and (F3) fails for it.

Take \(n=1\), and let \(\mathcal F\) consist of the partial maps
\(\R^m\rightharpoonup\R\), \(m\ge0\), given by fractions \(P/Q\), where
\(P,Q\in\Q[x_1,\ldots,x_m]\), each defined exactly where its displayed
denominator \(Q\) is nonzero. The case \(Q\equiv0\) gives the
nowhere-defined map. A subset of \(\R\) is closed under \(\mathcal F\)
exactly when it is a subfield,
and \(\cl(\varnothing)=\Q\). The hypotheses of Theorem~\ref{thm:A} are
checked at the end of Section~\ref{sec:general-closures}; for (F3), a unary
section is either nowhere defined or a rational function of one variable
on the complement of finitely many points.

\begin{corollary}[Subfields of every dimension]\label{cor:subfields}
Assume \(V=L\). There is a lightface \(\Pi^1_1\) set
\(G_{\mathrm{fld}}\subseteq[0,1]\times\R\) such that the sections
\[
 F(\alpha)=\{y\in\R:(\alpha,y)\in G_{\mathrm{fld}}\},
 \qquad 0\leq\alpha\leq1,
\]
are proper subfields of \(\R\) with \(\dimH F(\alpha)=\alpha\).
Moreover, \(F(0)=\Q\), and \(F(\alpha)\subsetneq F(\beta)\) for
\(\alpha<\beta\), with \(F(\alpha)=\bigcup_{r\in\Q,\,0<r<\alpha}F(r)\)
for \(\alpha>0\).
\end{corollary}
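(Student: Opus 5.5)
The plan is to apply Theorem~\ref{thm:A} to the coded function family with \(I=\{(1,\gamma_0)\}\) for a fixed recursive real \(\gamma_0\) (say the constant zero sequence), and with \(\mathcal F_{(1,\gamma_0)}\) the family of rational maps \(P/Q\) described just before the corollary. The first step is to fix a computable enumeration \(e\mapsto(m_e,P_e,Q_e)\) of all triples with \(P_e,Q_e\in\Q[x_1,\dots,x_{m_e}]\), and to set \(f_e(\bar a)=P_e(\bar a)/Q_e(\bar a)\) where \(Q_e(\bar a)\neq0\). Then I check the hypotheses in order.

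For (F1): projections are \(x_i/1\), and the rational constants are nullary fractions. Composition of fractions, permutation or identification of variables, and adding dummy variables again give maps of the form \(P/Q\). The one point needing care is that the domain of a composite must be exactly the nonvanishing set of \emph{some} displayed denominator. Clearing inner denominators, the composite \(P(g_1,\dots,g_k)/Q(g_1,\dots,g_k)\) with \(g_j=P_j/Q_j\) has domain \(\{\prod_j Q_j\neq0,\ Q(g)\neq0\}\). Multiplying numerator and denominator by a high power of \(\prod_j Q_j\) yields polynomials \(P',Q'\) with \(Q'=(\prod_j Q_j)^N\cdot Q(g)\). Then \(Q'\neq0\) holds exactly on that domain, and the value is the same. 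Since (F1) is a requirement on the set of maps only, no effectivity of composition indices is needed.

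For (F2): the arity \(m_e\) is computable. The relation \(\operatorname{Eval}(1,\gamma,e,\bar a,y)\) says \(\gamma=\gamma_0\), \(Q_e(\bar a)\neq0\), and \(P_e(\bar a)=yQ_e(\bar a)\). This is arithmetical in the reals, hence \(\Delta^1_1\).

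For (F3): fix a section \(h(x)=P(\bar a,x)/Q(\bar a,x)\). If \(Q(\bar a,x)\equiv0\) as a polynomial in \(x\), then \(h\) is nowhere defined and alternative (a) holds with \(Q'=Q\) and \(V\) any open neighborhood. Otherwise \(h\) is a one-variable rational function defined off the finitely many zeros of \(Q(\bar a,\cdot)\). Its derivative is either identically zero, in which case \(h\) is locally constant, or vanishes at finitely many points. So \(\R\) splits into finitely many points and open intervals on each of which \(h\) is constant or \(C^1\) with nonvanishing derivative, and Lemma~\ref{lem:one-variable} gives (F3) with some \(\varepsilon_h>0\). If one prefers to avoid that lemma, there is a direct argument. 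With at most \(k\) bad points, any interval \(Q\) split into \(k+2\) equal parts contains one part \(Q''\) free of bad points. Taking \(Q'\) to be the middle third of \(Q''\) gives \(\varepsilon_h=1/(3(k+2))\), and a small open neighborhood \(V\) of \(Q'\) stays inside a single good interval, where \(h\) is constant or locally bi-Lipschitz by the mean value theorem. I expect this uniform choice of \(\varepsilon_h\) to be the only real obstacle, and it is mild.

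Finally, apply Theorem~\ref{thm:A} and let \(G_{\mathrm{fld}}=\{(\alpha,y):(1,\gamma_0,\alpha,y)\in G\}\). This is lightface \(\Pi^1_1\) because \(\gamma_0\) is recursive. A set closed under all \(P/Q\) contains \(0,1\), and is closed under \(x\pm y\), \(xy\), and \(x/y\) for \(y\neq0\). Conversely, a subfield is closed under every such map. So each \(F(\alpha)\) is a subfield, and it is proper by (1) of Theorem~\ref{thm:A}. The remaining properties transfer directly: \(F(0)=\cl(\varnothing)=\Q\), \(\dimH F(\alpha)=\alpha\) by (2), and strict monotonicity with left continuity by (3).
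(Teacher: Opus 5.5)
Your proposal is correct and follows the paper's proof. You apply Theorem~\ref{thm:A} at the recursive label $(1,\gamma_0)$, verify (F1)--(F3) with (F3) via Lemma~\ref{lem:one-variable}, and identify the closed sets with subfields and $\cl(\varnothing)$ with $\Q$. The only difference is presentational: you enumerate fractions $P/Q$ directly and obtain closure under composition by clearing denominators with a power of $\prod_jQ_j$ exceeding $\deg Q$, so that $Q'$ also vanishes where an inner denominator does, whereas the paper enumerates field terms, for which composition is immediate, and derives the fraction normal form $P_t\prod_iR_{t,i}/(Q_t\prod_iR_{t,i})$ for (F2) and (F3).
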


\Needspace{14\baselineskip}
\begin{corollary}[Elementary substructures of the real exponential field]
\label{cor:exponential}
Assume \(V=L\), and let
\(\R_{\exp}=(\R;<,+,-,\cdot,0,1,\exp)\).
There is a lightface \(\Pi^1_1\) set
\(G_{\exp}\subseteq[0,1]\times\R\) such that the sections
\[
 E(\alpha)=\{y\in\R:(\alpha,y)\in G_{\exp}\},
 \qquad 0\leq\alpha\leq1,
\]
are proper elementary substructures of \(\R_{\exp}\), with
\(\dimH E(\alpha)=\alpha\) and
\(E(0)=\dcl_{\R_{\exp}}(\varnothing)\).
Moreover, \(E(\alpha)\subsetneq E(\beta)\) whenever
\(\alpha<\beta\), and
\[
 E(\alpha)=
 \bigcup_{\substack{r\in\Q\\0<r<\alpha}}E(r)
 \qquad(\alpha>0).
\]
In particular, each \(E(\alpha)\) is a real closed subfield of \(\R\).
\end{corollary}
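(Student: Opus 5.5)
We derive Corollary~\ref{cor:exponential} from Theorem~\ref{thm:A}, applied with \(n=1\) and a single index (so \(I\) is a singleton, trivially \(\Delta^1_1\)), to the family \(\mathcal F_{\exp}\) of all \(\varnothing\)-definable partial functions \(\R^m\rightharpoonup\R\) of \(\R_{\exp}\). Each such function is given by a formula \(\varphi(\bar x,y)\) of the language of \(\R_{\exp}\); we let \(f_{e}\) be the function defined by the \(e\)-th formula when \(\varphi\) defines the graph of a partial function, and the nowhere-defined map otherwise. We must check (F1)--(F3), and then show that sets closed under \(\mathcal F_{\exp}\) are exactly elementary substructures.

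\textbf{Step 1 (F1 and F2).} (F1) holds because \(\varnothing\)-definable functions contain the projections and the rational constants (which are \(\varnothing\)-definable), and are closed under composition, permutation and identification of variables, and dummy variables. For (F2), the arity is read off from the formula. The evaluation relation \(f_e(\bar a)=y\) is equivalent to \(\R_{\exp}\models\varphi_e(\bar a,y)\) together with the arithmetical-in-truth statement that \(\varphi_e\) defines a function. Truth in \(\R_{\exp}\) for formulas with real parameters is obtained by iterating quantifiers over \(\R\) through finitely many levels, so for each fixed formula it is projective of bounded complexity. To get \(\Delta^1_1\) uniformly we instead use o-minimality and model completeness (Wilkie): every formula is equivalent to an existential one. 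Moreover, definable functions are piecewise given, on finitely many cells, by implicitly defined functions from exponential polynomials with rational coefficients, which makes \(\{y: f_e(\bar a)=y\}\) computable-approximable. The cleaner route is to observe that the full satisfaction relation \(\{(e,\bar a,y): \R_{\exp}\models\varphi_e(\bar a,y)\}\) is \(\Delta^1_1\). It is the unique solution of a Tarski recursion over \(\omega\) whose clauses quantify over reals, and the existence and uniqueness of such a satisfaction class is expressible both \(\Sigma^1_1\) (there exists a satisfaction function on \(\omega\times\R^{<\omega}\) coded suitably) and \(\Pi^1_1\). I expect this step to be \textbf{the main obstacle}. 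The satisfaction class is a set of reals, not a real, so the coding argument must go through model completeness. With Wilkie's theorem, the truth of existential formulas \(\exists\bar z\,\psi\), with \(\psi\) quantifier-free, is \(\Sigma^1_1\), indeed arithmetical-in-reals by density and continuity, and the negated formulas are covered by the existential equivalents found by effective search (the theory of \(\R_{\exp}\) being needed only for checking equivalences). To avoid decidability issues, we index only those \(e\) for which a proof of the equivalence is found, relative to the complete theory as a real parameter \(\delta\), which Theorem~\ref{thm:A} allows. The conclusion is then \(\Pi^1_1(\delta)\). To reach lightface, note that \(\delta\in L_{\omega_1^{CK}}\)? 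This is not obviously so. Instead, take the theory \(T=\mathrm{Th}(\R_{\exp})\), which is \(\Delta^1_1\) (even arithmetical in real-quantifier truth, hence hyperarithmetic), so relativizing to it preserves lightface \(\Pi^1_1\).

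\textbf{Step 2 (F3).} Sections are unary \(\R_{\exp}\)-definable functions with parameters. By o-minimal monotonicity, \(\R\) splits into finitely many points and open intervals on which \(h\) is undefined, constant, or \(C^1\) with nonvanishing derivative. Lemma~\ref{lem:one-variable} then gives (F3).

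\textbf{Step 3 (conclusion).} Let \(E(\alpha)=A(\alpha)\). A set closed under all \(\varnothing\)-definable functions is \(\dcl\)-closed. Since \(\R_{\exp}\) has definable Skolem functions (an o-minimal expansion of a field), Tarski--Vaught shows that it is an elementary substructure. Then \(E(0)=\cl(\varnothing)=\dcl(\varnothing)\), and the properties of dimension, properness, the chain and left-continuity come directly from Theorem~\ref{thm:A}. Real closedness follows from elementarity.
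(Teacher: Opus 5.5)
The gap is in Step~1, at (F2), which is the step you flag yourself. You index \emph{all} formulas \(\varphi_e\), so evaluating \(f_e\) requires knowing whether \(\varphi_e\) is functional and finding an existential equivalent. Your final route does both with the oracle \(T=\mathrm{Th}(\R_{\exp})\), and the lightface conclusion then rests entirely on \(T\) being hyperarithmetic. You give no argument for that. ``Arithmetical in real-quantifier truth'' is circular, since truth of sentences with real quantifiers is exactly what is in question. One sentence earlier you concede that \(\delta\in L_{\omega_1^{CK}}\) is not obvious, and for \(\delta=T\subseteq\omega\) that is the same assertion. The satisfaction-class route fails for the reason you give. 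The claim is true: by definable choice \(\dcl_{\R_{\exp}}(\varnothing)\) is an elementary substructure of \(\R_{\exp}\), and existential definitions of its elements give an arithmetical presentation, whose theory is hyperarithmetic. But this needs an argument you have not supplied. As written, (F2) is not established, and without it you only get \(G_{\exp}\in\Pi^1_1(T)\).

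The missing idea, which the paper uses, is to enumerate objects for which no negation ever has to be evaluated.
\begin{itemize}
\item Let \(R_e(\bar x,y)\) be \(\exists\bar z\,\tau_e(\bar x,y,\bar z)=0\) for an enumeration of \(\exp\)-terms \(\tau_e\).
\item Let \(f_e(\bar x)\) be the unique \(y\) with \(R_e(\bar x,y)\) if there is exactly one such \(y\), and undefined otherwise.
\item Each \(f_e\) is \(\varnothing\)-definable. Wilkie's model completeness is used only non-effectively, to see that every \(\varnothing\)-definable partial function has an existential graph and hence equals some \(f_e\). No equivalents are ever searched for.
\item Then \(f_e(\bar a)=y\) iff \(R_e(\bar a,y)\wedge\neg\exists v\,(v\ne y\wedge R_e(\bar a,v))\). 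Writing \(v\ne y\) as \(\exists w\,(v-y)w=1\), both conjuncts are arithmetic uniformly in \(e,\bar a,y\), by the density-and-compactness observation you already make.
\end{itemize}
This gives lightface (F2) directly, and Theorem~\ref{thm:A} then yields a lightface \(G_{\exp}\).

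A smaller point in Step~2: monotonicity only gives pieces on which \(h\) is constant or continuous and strictly monotone (consider \(x^3\) at \(0\)). You need the piecewise \(C^1\) theorem and a refinement at the finitely many zeros of \(h'\) before Lemma~\ref{lem:one-variable} applies. Your (F1) and Step~3 match the paper.
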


The second theorem addresses bases. A pregeometry is a closure operator
with finite character and exchange; a basis is an independent set whose
closure is the whole space. Here the hypotheses concern countable
closures, addition, and effective membership in a finite closure.

\begin{definition}[Coded families of pregeometries]\label{def:pregeometry-family}
A coded family of pregeometries consists of a lightface \(\Delta^1_1\) set
\(I\subseteq(\omega\setminus\{0\})\times2^\omega\) and, for each
\(\iota=(k,\gamma)\in I\), a pregeometry \(\cl_\iota\) on \(\R^k\), subject to:
\begin{enumerate}[label=\textup{(C\arabic*)}]
\item \(\cl_\iota(A)\) is countable whenever \(A\subseteq\R^k\) is countable;
\item \(a+b\in\cl_\iota(\{a,b\})\) for all \(a,b\in\R^k\);
\item the relation
      \[
       (k,\gamma)\in I\quad\hbox{and}\quad
       y\in\cl_{(k,\gamma)}(\ran\bar x)
      \]
      is lightface \(\Delta^1_1\), uniformly in \(k,\gamma\), the finite tuple
      \(\bar x\in(\R^k)^{<\omega}\), and \(y\in\R^k\).
\end{enumerate}
\end{definition}

\begin{maintheorem}[Bases of pregeometries]\label{thm:B}
Assume \(V=L\), and let \((\cl_\iota)_{\iota\in I}\) be a coded family of pregeometries.
There is a lightface \(\Pi^1_1\) set
\[
 \mathcal G\subseteq
 \{(k,\gamma,s,x):(k,\gamma)\in I,\ 0\le s\le k,\ x\in\R^k\}
\]
such that every section
\[
 \mathcal G_{k,\gamma,s}=
 \{x:(k,\gamma,s,x)\in\mathcal G\}
\]
is a basis of \(\cl_{(k,\gamma)}\) and has Hausdorff dimension \(s\).
Each section is \(\Pi^1_1(\gamma,s)\). If the family is given relative
to a real parameter \(\delta\), the same holds with
\(\mathcal G\in\Pi^1_1(\delta)\).
\end{maintheorem}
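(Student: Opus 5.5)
We will prove Theorem~\ref{thm:B} by a transfinite recursion of length \(\omega_1\) along the canonical \(\Sigma^1_2\)-good wellorder of \(\R\) in \(L\), in the Miller style. We then verify that the resulting set is \(\Pi^1_1\) using the standard ``self-constructibility'' criterion: each new element added at stage \(\xi\) codes, in the \(\Delta^1_1\) sense, a countable level \(L_\beta\) witnessing the construction up to that stage. Fix \(\iota=(k,\gamma)\) and \(s\in[0,k]\). The target set \(B\) is a union \(B=B_0\cup B_1\).

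The part \(B_0\) is an independent set of dimension exactly \(s\). Its independent closure is \(\cl_\iota(B_0)\), a proper subspace of dimension \(s\) (this uses countable closures and exchange). We take \(B_0\) to be a compact-type Cantor set \(K_s\) of dimension \(s\), thinned to be independent. Because \(\cl_\iota\) of countable sets is countable by (C1), a Mycielski-type argument produces a perfect independent set of any prescribed dimension \(\le s\). We choose it, and the subsequent recursion, so that the closure of what has been chosen never swallows too much.

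The part \(B_1\) is obtained by enumerating the reals \((z_\xi)_{\xi<\omega_1}\) in the \(L\)-order. At stage \(\xi\) we have a countable independent set \(B_{<\xi}\). If \(z_\xi\in\cl_\iota(B_0\cup B_{<\xi})\), we do nothing. Otherwise we must add an element \(b_\xi\) so that \(z_\xi\) lands in the closure while independence is preserved and the dimension is not raised. We take \(b_\xi=z_\xi+t\) together with \(t\), where \(t\) is chosen in a fixed null, \(0\)-dimensional perfect set \(P\). Condition (C2) then gives \(z_\xi\in\cl_\iota(\{t,z_\xi+t\})\). We require that \(\{t,z_\xi+t\}\) be independent over the countable set \(\cl_\iota(B_0\cup B_{<\xi}\cup\{z_\xi\})\). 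This excludes only countably many ``bad'' pairs for fixed \(z_\xi\), so some \(t\in P\) works, and also \(z_\xi+t\notin\) the previous closure. Since \(z_\xi\) is not in the prior closure, exchange shows that adding both elements keeps independence.

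The added points lie in \(P\cup(P+\text{countably many }z)\), a countable union of \(0\)-dimensional sets. Hence \(\dimH B=\dimH B_0=s\), and the final set is a basis. We choose \(t\) \(L\)-least among those coding a countable model \(L_\beta\) containing the recursion history. This coding is done by interleaving the code into the binary digits of \(t\) inside the perfect set \(P\), which can be made a set of reals whose even digits are free and odd digits are zero. With this choice, membership \(x\in B\) is witnessed by a countable \(L_\beta\) recoverable hyperarithmetically from \(x\) (or from \(x-z\) with \(z\) also recoverable). The membership statement is therefore \(\Pi^1_1\) uniformly in \((k,\gamma,s,x)\), using (C3) to evaluate finite closures inside \(L_\beta\) absolutely. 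The case \(s=0\) is handled by taking \(B_0=\varnothing\).

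The main obstacle is the coding step. The inserted code must not destroy independence, since we choose among uncountably many codings of the same \(L_\beta\) by varying free digits, and only countably many are excluded. It must also keep dimension zero, which requires sparse coding with long zero blocks so that \(\dimH P=0\). For \(B_0\), we make the Cantor set itself self-coding only at the level of \(\gamma,s\), so that it is \(\Delta^1_1(\gamma,s)\) and the \(\Pi^1_1\) bound survives.
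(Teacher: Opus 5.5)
Running Miller's recursion along $<_L$ directly, instead of invoking Vidnyánszky's theorem, is fine. The problems are in the spanning step and in $B_0$.

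\emph{The spanning step.} There are two genuine gaps here, and the paper closes both with a device your proposal lacks.

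First, dimension. The recursion has length $\omega_1$ and adds a partner $z_\xi+t_\xi$ at every stage where $z_\xi$ is not yet spanned. So the partners lie in $\bigcup_{\xi<\omega_1}(z_\xi+P)$, a union of $\aleph_1$ translates, not ``$P+$ countably many $z$''. Countable stability gives no bound here, and nothing confines these points to a fixed set of dimension at most $s$.

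Second, definability. Only $t$ carries the code of $L_\beta$. The bounded-witness ($\Pi^1_1$ via Spector--Gandy) definition of membership also needs the partner $x=z_\xi+t$ to hyperarithmetically recover its witness, and you give no mechanism for this. Getting $t=x-z_\xi$ requires $z_\xi$, and your only source for $z_\xi$ is $L_\beta$, i.e.\ $t$ itself.

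The missing idea is the reflection Cantor set of Proposition~\ref{prop:reflection-cantor}. For every $y\in\R^k$ and every real $c$ (for you, a code of $L_\beta$ joined with $\Cut(y)$), there is a perfect set $K$ such that for every $x\in K$, both $x$ and $y-x$ lie in the single fixed zero-dimensional set $U_k(0)$ and both compute $c$. This works because $x$ is placed in coded windows at even scales and $y-x$ at odd scales. Hence $U_k(0)+U_k(0)=\R^k$, all spanning points stay in $U_k(0)\subseteq U_k(s)$, and $x\equivh y-x$. Either point therefore recovers the whole batch and its record, which is exactly what (B3)(b) and \eqref{eq:basis-bounded} use.

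\emph{The perfect part $B_0$.} This is a separate gap. Once $B_0$ is perfect, $\cl_\iota(B_0\cup B_{<\xi}\cup\{z_\xi\})$ has size continuum, not countable as you write. So the count ``only countably many bad $t$'' is unavailable, and nothing prevents $P$, or the perfect set of $t$'s coding a given $L_\beta$, from lying inside that closure.

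You also only assert that a Mycielski argument yields a perfect independent set of dimension exactly $s$ that is $\Delta^1_1(\gamma,s)$ uniformly. Mycielski's theorem by itself gives no control of Hausdorff dimension. If this step worked, every section with $s>0$ would be a coanalytic basis of dimension $s$ containing a perfect set. That is exactly what the paper leaves open in Question~\ref{q:perfect}, so it cannot be taken for granted.

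The paper needs no $B_0$; the lower bound comes from dimension demands. Let $J$ be the countable independent set built so far. For each null-set code $q$ with $r_q^k<s$, the paper adjoins a point of $K_k(r_q^k,0,c)\subseteq U_k(s)$ outside $N_k(q)\cup\cl_\lambda(J)$. Such a point exists because $\Haus^t(K)>0$ while $\Haus^t(N_k(q)\cup\cl_\lambda(J))=0$ for $\theta_q^k<t<r_q^k$. Corollary~\ref{cor:dimension-test} then gives $\dimH B\ge s$. Since $J$ is always countable, every set to be avoided is countable or $\Haus^t$-null, and the counting arguments go through.
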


\begin{corollary}[Hamel and transcendence bases]\label{cor:bases}
Assume \(V=L\). There are lightface \(\Pi^1_1\) relations
\(H(n,s,x)\) and \(T(s,z)\) such that:
\begin{enumerate}[label=\textup{(\arabic*)}]
\item For every \(n\ge1\) and \(s\in[0,n]\), the set
      \[
       H_{n,s}=\{x\in\R^n:H(n,s,x)\}
      \]
      is a Hamel basis of \(\R^n\) over \(\Q\) with
      \(\dimH H_{n,s}=s\).
\item For every \(s\in[0,2]\), the set
      \[
       T_s=\{z\in\C:T(s,z)\}
      \]
      is a transcendence basis of \(\C\) over \(\Q\) with
      \(\dimH T_s=s\).
\end{enumerate}
Each section \(H_{n,s}\) and \(T_s\) is \(\Pi^1_1(s)\).
\end{corollary}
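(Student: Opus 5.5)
The plan is to derive the corollary directly from Theorem~\ref{thm:B} by exhibiting two coded families of pregeometries, one for Hamel bases and one for transcendence bases, and checking (C1)--(C3).

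For (1), take
\[
 I_H=\{(n,0^\omega):n\ge1\}.
\]
Let \(\cl_{(n,0^\omega)}(A)\) be the \(\Q\)-linear span of \(A\) in \(\R^n\), viewed as a \(\Q\)-vector space. This is a pregeometry: it has finite character, and exchange follows from Steinitz exchange. Its bases are exactly the Hamel bases of \(\R^n\) over \(\Q\). Conditions (C1) and (C2) are immediate. For (C3), note that \(y\in\spanop_\Q(\ran\bar x)\) holds iff there are rationals \(q_1,\dots,q_m\) with \(y=\sum q_ix_i\). This is a countable existential quantifier over an equality of reals, which is a \(\Pi^0_1\) condition. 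The relation is therefore arithmetic, uniformly in \(n\) and \(\bar x\), and in particular lightface \(\Delta^1_1\).

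Theorem~\ref{thm:B} now yields a lightface \(\Pi^1_1\) set \(\mathcal G\). Set
\[
 H(n,s,x)\iff \mathcal G(n,0^\omega,s,x).
\]
Substituting the recursive real \(0^\omega\) preserves lightface \(\Pi^1_1\), and each section \(H_{n,s}\) is \(\Pi^1_1(s)\).

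For (2), take \(k=2\) and identify \(\C\) with \(\R^2\). Let \(\cl(A)\) be the relative algebraic closure in \(\C\) of \(\Q(A)\). Algebraic closure in a field is a pregeometry (exchange is the usual Steinitz argument), and its bases are the transcendence bases over \(\Q\). (C1) holds because the algebraic closure of a countable field is countable. (C2) holds since \(a+b\in\Q(a,b)\).

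The main point to check is (C3). Membership \(y\in\cl(\ran\bar x)\) holds iff there is a nonzero polynomial \(P\in\Q[X_1,\dots,X_m][Y]\) whose coefficients, when evaluated at \(\bar x\), do not all vanish, and which satisfies \(P(\bar x,y)=0\). This quantifies over a countable set, namely codes of polynomials. The inner conditions are: the equation \(P(\bar x,y)=0\), which is \(\Pi^0_1\); and a coefficient polynomial being nonzero at \(\bar x\), which is \(\Sigma^0_1\). The whole relation is therefore arithmetic, hence \(\Delta^1_1\).

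One can take \(I_T=\{(2,0^\omega)\}\), or simply the single index, and define \(T(s,z)\) from \(\mathcal G\) as before. Theorem~\ref{thm:B} then gives the dimension \(s\in[0,2]\).

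No genuine obstacle is expected. The only care needed is the coefficient-nonvanishing clause in (C3), so that the zero polynomial and polynomials vanishing identically at \(\bar x\) are excluded, and a uniform coding of \(\bar x\) and polynomials so that the relation is lightface.
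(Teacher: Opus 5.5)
Your proposal is correct and follows the paper's proof essentially verbatim. Like the paper, you apply Theorem~\ref{thm:B} to the \(\Q\)-linear span on \(\R^n\) with labels \(\{(n,\gamma_0):n\ge1\}\) and to relative algebraic closure on \(\C\cong\R^2\) with the single label \((2,\gamma_0)\), and you verify (C3) by a number quantifier over rational coefficients or polynomials with \(P(\bar x,Y)\ne0\) in \(\C[Y]\). The paper makes explicit one step you leave implicit: the complex conditions are split into real and imaginary parts, which are rational polynomials in the real coordinates, so that Lemma~\ref{lem:cuts}(c) applies.
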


The definability method originates in work of Erd\H{o}s, Kunen and
Mauldin \cite{EKM81} and Miller \cite{Mil89}. We use Vidny\'anszky's
selection theorem \cite{Vid14}: under \(V=L\), a coanalytic recursion
whose admissible choices are cofinal in the hyperarithmetic degrees has
a coanalytic set of construction records. This selection step is the
only use of \(V=L\).

A closely related method appears in Richter's construction of coanalytic
counterexamples to Marstrand's projection theorem \cite{Ric25}, where
sparse digit coding controls countably many scalar images within a
coanalytic recursion. Our Cantor constructions use a related strategy.
For Theorem~\ref{thm:A}, we additionally arrange recovery of
finite-support construction records from generated values outside the
closure of the empty set. This recovery permits a coanalytic definition
of the whole generated closure.

In the proof of Theorem~\ref{thm:A} the sets \(A_\iota(\alpha)\) are
generated by points chosen one at a time, and each generator \(z\)
carries a rational tag \(r\), the dimension it is allowed to
contribute. The new generator is taken from a Cantor set built so as to
control, simultaneously, the countably many unary sections
\(h(x)=f(\bar a,x)\) with \(\bar a\) from the earlier generators. By
(F3) we may shrink the cubes of the construction until each section is
undefined, constant, or locally invertible near every point of the
Cantor set. A locally constant section contributes nothing new: its
value is obtained by substituting a rational vector, so it lies in the
closure of \(\bar a\) already. On a locally invertible branch, the
\(\Delta^1_1\) graph of the section lets us recover \(z\)
hyperarithmetically from \(h(z)\) and \(\bar a\); the geometric
witnesses never have to be computed.

The Cantor set is chosen so that every value which is new over some
finite tuple of earlier generators lies in an approximation set
\(E_n(r)\) of dimension at most \(r\), and encodes the history of the
construction. We impose this over every finite tuple, and not merely
over the whole earlier closure, because the definability argument
proceeds through minimal finite supports: if \(y\) is generated by a
minimal finite set of generators, then the last of them, together with
the history before it, is hyperarithmetic in \(y\). A
hyperarithmetically bounded existential quantifier then gives a
\(\Pi^1_1\) definition of the generated closure. Only finite character
and the recovery of local inverses are used here, which is why no
exchange property is needed.

The lower bound on the dimension comes from dimension demands: for
every rational \(r\) and every coded set of dimension less than \(r\),
some generator of tag \(r\) lies outside that set. A separate
reservation demand chooses, for each \(\iota\), a point that is kept
out of the closure at all later stages; this makes the sets proper
even when \(\alpha=n\).
Finally, \(A_\iota(\alpha)\) is the closure of the generators with tag
below \(\alpha\), which gives monotonicity, left continuity and
\(A_\iota(0)=\cl_\iota(\varnothing)\) at once.

For Theorem~\ref{thm:B}, a spanning demand for \(y\in\R^k\) is met,
when necessary, by adjoining a pair \(\{x,y-x\}\) from a fixed
zero-dimensional approximation set. The addition hypothesis ensures
that the pair spans \(y\), and exchange ensures independence.
Both points compute the spanning target and are hyperarithmetically
equivalent, so either point recovers the finite batch. Together with
the label, it recovers the corresponding construction record.
Further demands avoid all coded sets of smaller dimension. Thus the basis construction
has its own algebraic hypotheses and does not require the geometric
condition \textup{(F3)} used for closures.

Section~\ref{sec:preliminaries} fixes the effective and geometric
conventions. Sections~\ref{sec:cantor-sets} and~\ref{sec:null-sets} supply
the coded Cantor sets and null sets used in both constructions.
Section~\ref{sec:general-closures} proves Theorem~\ref{thm:A} and
Corollary~\ref{cor:subfields}, and gives further field applications.
Section~\ref{sec:bases} proves Theorem~\ref{thm:B} and verifies its Hamel- and
transcendence-basis applications. The final section discusses descriptive
optimality, contrasting consistency results, and further questions.

\section{Preliminaries}\label{sec:preliminaries}

Elements of \(\omega^\omega\) and of \(2^\omega\subseteq\omega^\omega\) are called
reals. Fix a computable pairing function \(\langle\cdot,\cdot\rangle:\omega^2\to\omega\).
For \(a\in\omega^\omega\), its \(n\)-th column is
\(a^{[n]}(i)=a(\langle n,i\rangle)\).
Finite and countable joins are denoted by \(\oplus\); in a finite join all
unused columns are zero. Write \(a^+(i)=a(i+1)\), and let \(e^\frown u\) have
first entry \(e\) and tail \(u\). The constant sequence with value \(k\) is
denoted by \(\widehat k\).

Fix a computable enumeration \((\rho_i)_{i\in\omega}\) of \(\Q\), without
repetitions. For \(x\in\R\), define
\[
 \Cut(x)(i)=1\quad\Longleftrightarrow\quad \rho_i<x.
\]
For \(x=(x_1,\ldots,x_k)\in\R^k\), put
\(\Cut(x)=\Cut(x_1)\oplus\cdots\oplus\Cut(x_k)\).
Identify \(\C\) with \(\R^2\).
Scalar cuts and the codes of one-coordinate vectors are computably
interconvertible; in constructions uniform in \(k\), we use the vector code
also when \(k=1\).
Let \(\Dec(v)\) be the real whose scalar cut is \(v\), if one exists, and let
\(\Dec(v)=0\) otherwise.

\begin{convention}\label{conv:codes}
In tuples involving Euclidean points, replace each such point by its cut.
Lightface pointclasses always refer to the resulting real codes.
Thus \(\Pi^1_1(\gamma,s)\) means \(\Pi^1_1(\gamma\oplus\Cut(s))\).
When \(x\in\R^k\) occurs in a reducibility statement, it stands for \(\Cut(x)\).
This oracle contains all coordinate cuts; an individual coordinate is
written explicitly as \(\Cut(x_i)\).
For a label \(\iota=(n,\gamma)\), use the real code
\(\widehat n\oplus\gamma\).
Finite tuples are coded by finite joins together with their lengths.
\end{convention}

\begin{lemma}[Arithmetic operations on cuts]\label{lem:cuts}
\begin{enumerate}[label=\textup{(\alph*)}]
\item The set \(\{(k,\Cut(x)):k\ge1,\ x\in\R^k\}\) is arithmetic.
\item For \(x\in\R\), a cut \(\Cut(x)\), together with \(n\in\omega\), uniformly computes
      rationals \(a<x\le b\) with \(b-a\le2^{-n}\).
\item For a rational polynomial \(P\) and a real tuple \(\bar x\), the relation
      \(P(\bar x)>0\) is \(\Sigma^0_1\), and \(P(\bar x)=0\) is \(\Pi^0_1\), in
      \(\Cut(\bar x)\), uniformly in \(P\). Consequently
      \(\Cut(P(\bar x))\) is uniformly computable in \(\Cut(\bar x)'\).
\item If \(\cell_N(x)\) is the unique integer \(v\) satisfying
      \[
                  v2^{-N}<x\le(v+1)2^{-N},
      \]
      then \(\cell_N(x)\) is uniformly computable from \(N\) and \(\Cut(x)\).
\end{enumerate}
\end{lemma}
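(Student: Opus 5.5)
The four parts are elementary facts about Dedekind cuts over the fixed computable enumeration \((\rho_i)\) of \(\Q\). I will verify each directly, in the order (a), (b), (d), (c), since (d) and (c) rely on (b).

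For (a), \(v\in2^\omega\) is a scalar cut exactly when \(\{\rho_i:v(i)=1\}\) is a nonempty, bounded-above, downward closed subset of \(\Q\) with no maximum. Each clause is arithmetic in \(v\):
\begin{itemize}
\item \(\forall i\,\forall j\,(v(j)=1\wedge\rho_i<\rho_j\to v(i)=1)\);
\item \(\forall j\,(v(j)=1\to\exists i\,(\rho_i>\rho_j\wedge v(i)=1))\);
\item \(\exists i\,v(i)=1\) and \(\exists i\,v(i)=0\).
\end{itemize}
The first two clauses are \(\Pi^0_1\) and \(\Pi^0_2\), respectively. Such a set is \(\{q:q<x\}\) for \(x=\sup\), and the map \(x\mapsto\Cut(x)\) is injective. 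For vectors I would add: \(w\) is a vector cut of length \(k\ge1\) iff every column \(w^{[j]}\) with \(j<k\) is a scalar cut and every column \(w^{[j]}\) with \(j\ge k\) is zero. This is arithmetic uniformly in \(k\).

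For (b), I would search for indices \(i,j\) with \(v(i)=1\), \(v(j)=0\) and \(\rho_j-\rho_i\le2^{-n}\), and output \(a=\rho_i\), \(b=\rho_j\). Then \(\rho_i<x\le\rho_j\). The search terminates because rationals are dense on both sides of \(x\): there is a rational in \((x-2^{-n-1},x)\) and one in \([x,x+2^{-n-1})\).

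For (d), the value \(\cell_N(x)\) is the unique integer \(v\) with \(v2^{-N}<x\) and \(\neg((v+1)2^{-N}<x)\). Since the dyadic rationals have computable indices in the enumeration, both conditions are read off the cut by single queries. An initial interval from (b) gives a finite range of candidates for \(v\), and I would check those candidates.

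For (c), I would use (b) to compute rational boxes around \(\bar x\) of width \(2^{-s}\). By interval arithmetic, these give rational enclosures \([l_s,u_s]\) of \(P(\bar x)\) that shrink to the value, with a modulus computable from \(P\). Then \(P(\bar x)>0\) iff \(\exists s\,(l_s>0)\), which is \(\Sigma^0_1\) in \(\Cut(\bar x)\). Likewise \(P(\bar x)=0\) iff \(\forall s\,(l_s\le0\le u_s)\), which is \(\Pi^0_1\). Finally, \(\Cut(P(\bar x))(i)=1\) iff \(P(\bar x)-\rho_i>0\). This is \(\Sigma^0_1\) in \(\Cut(\bar x)\) uniformly in \(i\), since \(P-\rho_i\) is again a rational polynomial, and hence it is decided by the jump.

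The only point needing care is the uniformity and correctness of the interval enclosures in (c). I would handle this by choosing boxes small enough that the Lipschitz bound of \(P\) on a bounded region gives \(u_s-l_s\to0\).
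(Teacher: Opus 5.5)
Your proposal is correct and follows the paper's proof essentially step by step. You use the same characterization of scalar and vector cuts for (a), interval-arithmetic enclosures with the positivity test applied to \(P-\rho_i\) for (c), and an initial bound followed by a finite search for (d); the only cosmetic difference is that in (b) you search directly for a bracketing pair of width at most \(2^{-n}\) rather than bisecting an initial bracket, which is equally valid.
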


\begin{proof}
Scalar cuts are exactly the codes for nonempty proper downward-closed
subsets of \(\Q\) with no largest element. These conditions are arithmetic.
For vector cuts apply them to the first \(k\) columns and require the
remaining columns to vanish.

A cut decides each comparison \(q<x\). Find a rational bracket for \(x\)
and bisect it to prove (b). Interval arithmetic then gives rational intervals
containing \(P(\bar x)\), with lengths tending effectively to zero.
Positivity is witnessed when one of these intervals has positive left
endpoint; equality to zero holds exactly when all contain zero.
Apply the positivity test to \(P-\rho_i\) to obtain (c).
Finally, first find an integer bound on \(|x|\), then perform a finite search
for the largest integer \(v\) satisfying \(v2^{-N}<x\), proving (d).
\end{proof}

A scalar cut computes a Cauchy name by Lemma~\ref{lem:cuts}(b), and a
vector cut does so coordinatewise, uniformly in the ambient dimension.
Conversely, strict comparison with a rational is semidecidable from a
Cauchy name, and one jump computes the whole cut. Thus these representations give
the usual effective projective pointclasses on Euclidean spaces, in particular
\(\Delta^1_1\) and \(\Pi^1_1\); see \cite{Mos09}.
For an individual computable real the cut is computable: use direct
comparison if it is rational, and successive approximation if it is
irrational.

Write \(a\leqh b\) if \(a\) is hyperarithmetic in \(b\), equivalently if
\(a\in\Delta^1_1(b)\), and write \(a\equivh b\) for mutual reducibility.
Finite joins and finite Turing jumps of reals hyperarithmetic in \(b\)
are again hyperarithmetic in \(b\).
We use the following standard effective facts \cite{Sac90,Mos09,Mil95,Vid14}.

\begin{lemma}\label{fact:effective}
\begin{enumerate}[label=\textup{(\alph*)}]
\item The relation \(a\leqh b\) is lightface \(\Pi^1_1\).
\item If \(P(v,b)\) is \(\Pi^1_1(t)\), then
      \[
                   (\exists b\leqh v)\,P(v,b)
      \]
      is \(\Pi^1_1(t)\) as a condition on \(v\).
\item If a \(\Sigma^1_1(d)\) set of reals is countable, every member is
      hyperarithmetic in \(d\). In particular, the unique member of a
      \(\Delta^1_1(d)\) singleton is hyperarithmetic in \(d\).
\item The classes \(\Pi^1_1\) and \(\Delta^1_1\) are closed under number
      quantifiers and finite conjunctions and disjunctions. The class
      \(\Delta^1_1\) is also closed under complements. These statements
      relativize uniformly.
\end{enumerate}
\end{lemma}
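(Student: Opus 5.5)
These four facts are standard results of effective descriptive set theory, and I would not reprove them from scratch. The plan is to recall the appropriate classical theorem for each clause, relativizing uniformly where needed. All of them are taken from \cite{Sac90,Mos09}.

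For (a), I use Kleene's characterization that \(a\leqh b\) iff \(a\in\Delta^1_1(b)\), iff there is a notation \(o\in\mathcal O^b\) with \(a\leqT H^b_o\). The set \(\mathcal O^b\) is \(\Pi^1_1\) uniformly in \(b\). The relation ``\(H\) is the \(H\)-set along \(o\)'' is arithmetic in \((H,o,b)\) once \(o\in\mathcal O^b\), and for such \(o\) the \(H\)-set is unique. Hence
\[
a\leqh b\iff \exists o\,\bigl[o\in\mathcal O^b\wedge \forall H\,(H=H^b_o\rightarrow \exists e\ a=\Phi_e^H)\bigr],
\]
which is \(\Pi^1_1\) because number quantifiers preserve \(\Pi^1_1\). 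Alternatively, one may write \(\Delta^1_1(b)\) membership directly using a \(\Pi^1_1\) parametrization of \(\Delta^1_1\) codes. Clause (d) is routine: closure of \(\Pi^1_1\) under number quantifiers and finite Boolean combinations follows from the normal form, contracting quantifiers via the pairing function. Closure of \(\Delta^1_1\) under complement is immediate from the definition, and all of this is uniform in parameters.

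Clause (b) is the Spector--Gandy/Kleene restricted quantification theorem, and it is the one real point. I would prove it by writing \(b\leqh v\) as \(b=\Phi_e^{H^v_o}\) with \(o\in\mathcal O^v\). Then
\[
(\exists b\leqh v)P(v,b)\iff\exists o\,\exists e\,\bigl[o\in\mathcal O^v\wedge \forall H\,(H=H^v_o\rightarrow \Phi^H_e\text{ total}\wedge P(v,\Phi^H_e))\bigr].
\]
The bracketed condition is \(\Pi^1_1(t)\), since it consists of a universal real quantifier over an arithmetic hypothesis and a \(\Pi^1_1\) matrix. The outer number quantifiers preserve \(\Pi^1_1\) by (d). The main obstacle is correctly handling the uniqueness of \(H\)-sets, so that the universal quantifier really pins down \(H^v_o\). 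That uniqueness is standard for \(o\in\mathcal O^v\), by effective transfinite induction.

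For (c), I use the effective perfect set theorem: a \(\Sigma^1_1(d)\) set with a non-hyperarithmetic-in-\(d\) member contains a perfect subset. This is Harrison's theorem, proved via the Gandy basis theorem and a Gandy--Harrington style splitting argument. Hence a countable \(\Sigma^1_1(d)\) set consists of reals in \(\Delta^1_1(d)\). For a \(\Delta^1_1(d)\) singleton \(\{a\}\) there is a more direct argument: \(a(n)=k\iff \exists x\,(x\in S\wedge x(n)=k)\iff\forall x\,(x\in S\rightarrow x(n)=k)\), so the graph of \(a\) is \(\Delta^1_1(d)\).
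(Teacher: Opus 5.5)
Your proposal is correct and takes the same route as the paper, which simply cites these standard facts from \cite{Sac90,Mos09}, naming (b) the Spector--Gandy theorem and (c) the effective perfect set theorem; your $H$-set arguments for (a) and (b) are the usual textbook proofs. One wording fix: in (d), $\Pi^1_1$ is closed under finite conjunctions and disjunctions, not under arbitrary Boolean combinations, since it is not closed under negation.
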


Part (b) is the Spector--Gandy theorem.
Part (c) is the effective perfect set theorem; a convenient statement is
\cite[Theorem~10.21]{Mar13}.
A useful consequence is that a partial function with
\(\Delta^1_1(d)\) graph has values hyperarithmetic in \(d\) and its input,
wherever it is defined.
This applies equally to partial maps between Euclidean spaces, with the
input and output represented by their whole-vector cuts.

A \emph{history} is a finite or countably infinite sequence of reals.
Fix the usual recursive coding of the space
\((\omega^\omega)^{\le\omega}\) by reals, so that its length and its entries
are uniformly available from its code.
For a history \(a\), let \(\ran a=\{a(n):n<|a|\}\).

\begin{definition}\label{def:compatible}
A set \(Y\subseteq\omega^\omega\) is compatible with
\[
 \mathscr F\subseteq(\omega^\omega)^{\le\omega}
                        \times\omega^\omega\times\omega^\omega
\]
if there are enumerations
\(\omega^\omega=\{p_\xi:\xi<\omega_1\}\) and
\(Y=\{b_\xi:\xi<\omega_1\}\), and histories \(a_\xi\) with
\(\ran a_\xi=\{b_\eta:\eta<\xi\}\), such that
\((a_\xi,p_\xi,b_\xi)\in\mathscr F\) for every \(\xi<\omega_1\).
A set \(X\) is cofinal in the hyperdegrees if, for every \(u\), some
\(x\in X\) satisfies \(u\leqh x\).
\end{definition}

\begin{theorem}[Vidny\'anszky's selection theorem]\label{fact:selection}
Assume \(V=L\).
Let \(\mathscr F\subseteq(\omega^\omega)^{\le\omega}
\times\omega^\omega\times\omega^\omega\) be \(\Pi^1_1(t)\).
If every section
\(\mathscr F_{a,p}=\{b:(a,p,b)\in\mathscr F\}\)
is cofinal in the hyperdegrees, there is a \(\Pi^1_1(t)\) set compatible
with \(\mathscr F\).
\end{theorem}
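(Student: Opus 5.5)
The plan is to run the transfinite recursion along the canonical good well-order \(<_L\) of the reals. Each chosen \(b_\xi\) will also be required to hyperarithmetically compute a code of a countable level of \(L\) in which the whole recursion up to stage \(\xi\) is visible. Membership in \(Y\) then reduces to the existence of a code \(c\leqh b\) of a well-founded model that certifies \(b\) as a stage value. Lemma~\ref{fact:effective}(b) (Spector--Gandy) turns this into a \(\Pi^1_1(t)\) definition.

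First, I fix the recursion. Enumerate \(\omega^\omega=\{p_\xi:\xi<\omega_1\}\) in \(<_L\)-order. At stage \(\xi\), let \(a_\xi\) be the \(<_L\)-least history enumerating \(\{b_\eta:\eta<\xi\}\). Let \(\delta_\xi\) be the least countable ordinal such that \(L_{\delta_\xi}\) contains \(t\), \(p_\xi\) and the sequence \((a_\eta,p_\eta,b_\eta)_{\eta<\xi}\), and such that \(L_{\delta_\xi}\models\mathrm{KP}\) with \(\delta_\xi>\omega_1^{t\oplus p_\xi\oplus a_\xi}\). Let \(u_\xi\) be the \(<_L\)-least real coding \(L_{\delta_\xi}\). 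By cofinality of \(\mathscr F_{a_\xi,p_\xi}\), some \(b\) satisfies \((a_\xi,p_\xi,b)\in\mathscr F\) and \(u_\xi\leqh b\); let \(b_\xi\) be the \(<_L\)-least such \(b\). Also arrange that \(b_\xi\notin\{b_\eta:\eta<\xi\}\); this is possible because the cofinal section has uncountably many members. Then \(Y=\{b_\xi:\xi<\omega_1\}\) is compatible with \(\mathscr F\), since every real eventually occurs as some \(p_\xi\).

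Second, I define \(Y\) by the condition that \((\exists c\leqh b)\) the following hold.
\begin{enumerate}
\item \(c\) codes a well-founded extensional model, which is therefore isomorphic to some \(L_\delta\).
\item In that model, the recursion above, relativized to it, is defined up to some stage \(\xi\), and its stage-\(\xi\) output is \(b\).
\item \((a_\xi,p_\xi,b)\in\mathscr F\) holds in \(V\).
\end{enumerate}
Well-foundedness is \(\Pi^1_1\) in \(c\). The internal clauses are arithmetic in \(c\). Clause (3) is \(\Pi^1_1(t)\). The hyperarithmetic existential quantifier preserves \(\Pi^1_1(t)\) by Lemma~\ref{fact:effective}(b). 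For soundness, every \(b_\xi\) has the witness \(c=u_\xi\) or a code of a slightly larger level.

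The main obstacle is to make the internal recursion agree with the true one. This matters in particular for the minimality clause: \(b\) must be the \(<_L\)-least admissible candidate, and the admissibility test involves \(\mathscr F\), which is \(\Pi^1_1(t)\). Stating "no \(<_L\)-smaller candidate lies in \(\mathscr F\)" externally would introduce a \(\Sigma^1_1\) conjunct under a universal number quantifier, which would break the \(\Pi^1_1\) form. I would avoid this by evaluating both \(\mathscr F\) and "\(u\leqh b'\)" inside the coded model. The model is an admissible \(L_\delta\) with \(\delta\) above the relevant \(\omega_1^{x}\), and for such models \(\Pi^1_1\) facts about its reals are absolute, because well-foundedness of the relevant recursive trees is decided correctly below \(\delta\). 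Thus the internal computation is \(\Delta^1_1\) in \(c\), and it coincides with the external one on every real that appears.

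To make this work, I must ensure two things. First, the witness levels \(L_\delta\) are closed enough; here I would require \(\delta\) to be a limit of admissibles, so that this is an arithmetic condition on \(c\) modulo well-foundedness. Second, any two certifying models compute the same initial segment of the construction. This follows from the condensation-style fact that initial segments of the \(<_L\)-recursion are computed identically in every sufficiently closed \(L_\delta\). Hence a \(b\) certified by any witness \(c\) really equals some \(b_\xi\), which gives the converse inclusion.
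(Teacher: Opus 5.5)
The paper does not prove this statement; it quotes it from \cite[Theorem~3.4]{Vid14}. Your overall architecture is the natural one: a recursion along \(<_L\), each \(b_\xi\) forced to compute the stage, and a hyperarithmetically bounded quantifier. However, the soundness step, that every \(b_\xi\) has a witness \(c\leqh b_\xi\), fails as stated.

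Clause (2) needs the model coded by \(c\) to contain \(b\), and \(c\leqh b\) bounds its height by \(\omega_1^{b}\). So your set contains only reals with \(b\in L_{\omega_1^{b}}\). To decide your minimality clause internally you need, in addition, such a level to be admissible; you even ask for a limit of admissibles.

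The code \(u_\xi\) cannot be the witness, because \(b_\xi\notin L_{\delta_\xi}\). Otherwise admissibility of \(\delta_\xi\) together with \(u_\xi\leqh b_\xi\) would put \(u_\xi\in L_{\delta_\xi}\).

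Nor does taking the \(<_L\)-least element of the \(\Pi^1_1\) set \(\{b\in\mathscr F_{a_\xi,p_\xi}:u_\xi\leqh b\}\) control the \(L\)-rank of \(b_\xi\) relative to \(\omega_1^{b_\xi}\). Nothing prevents the least candidate from being a Gandy-basis-type real \(b\) with \(b\) not hyperarithmetic in \(u_\xi\) and \(\omega_1^{b}=\omega_1^{u_\xi}\). Such a \(b\) is not in \(L_{\omega_1^{b}}\). Even when \(b_\xi\in L_{\omega_1^{b_\xi}}\), the ordinal \(\omega_1^{b_\xi}\) may be the next admissible after the level of \(b_\xi\), as for a canonical code of \(L_{\omega_1^{\mathrm{CK}}}\). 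In either case no witness of your kind exists. Limits of admissibles are not needed anyway: \(x\in L_\delta\) with \(\delta\) admissible already gives \(\omega_1^x\le\delta\).

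What is missing is a selection whose correctness is checked \emph{outside} the witness model. Then that model never has to contain \(b_\xi\) or certify \(<_L\)-minimality, which externally is exactly the \(\Sigma^1_1\) obstruction you identified. One way to do this is as follows.

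Apply Kondo--Addison uniformization, relative to \(t\), to
\[
 R=\{(a,p,u,b):(a,p,b)\in\mathscr F,\ u\leqh b\},
\]
whose sections are nonempty by cofinality. This gives a function \(g\) with \(\Pi^1_1(t)\) graph. Let \(u_\xi\) be a canonically chosen code of \(L_{\delta_\xi}\). For instance, choose \(\delta_\xi\) so that there is a surjection \(\omega\to L_{\delta_\xi}\) definable over \(L_{\delta_\xi}\), and take the code it induces; canonicity is then arithmetic in the code. Put \(b_\xi=g(a_\xi,p_\xi,u_\xi)\).

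Then \(b\in Y\) iff there is \(u\leqh b\) with the following properties: \(u\) canonically codes a well-founded admissible \(L_\delta\), the recursion runs inside it up to some \(\xi\) with \(\delta=\delta_\xi\), and \((a_\xi,p_\xi,u,b)\) lies in the graph of \(g\). Admissibility makes the internal evaluations of \(g\) at earlier stages correct. The last clause is an external \(\Pi^1_1(t)\) condition. The witness \(u_\xi\leqh b_\xi\) exists by construction. Lemma~\ref{fact:effective}(b) then yields the \(\Pi^1_1(t)\) definition.
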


This is \cite[Theorem~3.4]{Vid14}.
The selection relations used below are lightface unless a fixed real
parameter is explicitly introduced.

Use the Euclidean norm \(|\cdot|\) and the maximum norm
\(\|\cdot\|_\infty\) on \(\R^k\).
All cubes are axis-parallel with positive side length.
For \(A\subseteq\R^k\), \(t>0\), and \(\delta>0\), put
\[
 \Haus_\delta^t(A)=
 \inf\left\{\sum_{i\in\omega}(\diam V_i)^t:
 A\subseteq\bigcup_iV_i,\ \diam V_i\le\delta\right\},
 \qquad
 \Haus^t(A)=\lim_{\delta\downarrow0}\Haus_\delta^t(A).
\]
The Hausdorff dimension is
\(\dimH A=\inf\{t>0:\Haus^t(A)=0\}\), with \(\dimH\varnothing=0\).
We recall the elementary facts needed in the constructions; see \cite{Fal14}.

\begin{lemma}\label{lem:hausdorff}
Let \(A,A_0,A_1,\ldots\subseteq\R^k\).
\begin{enumerate}[label=\textup{(\alph*)}]
\item If \(t>\dimH A\), then \(\Haus^t(A)=0\); if
      \(0<t<\dimH A\), then \(\Haus^t(A)>0\).
\item For \(t>0\), \(\Haus^t\) is countably subadditive and vanishes on
      countable sets. In particular,
      \[
           \dimH\bigcup_n A_n=\sup_n\dimH A_n.
      \]
\item Suppose \(K\subseteq\R^k\) is compact and \(\mu\) is an atomless Borel
      probability measure supported on \(K\).
      If \(\mu(Q)\le C\lambda^t\) for every closed cube \(Q\) of side
      \(\lambda>0\), then \(\Haus^t(K)\ge C^{-1}\).
\end{enumerate}
\end{lemma}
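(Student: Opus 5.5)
The three items of Lemma~\ref{lem:hausdorff} are standard, and I would prove them directly from the definition of \(\Haus^t\) and \(\dimH\).

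\textbf{Item (a).} First I would record monotonicity in \(t\). If \(s<t\) and every cover has \(\diam V_i\le\delta\le1\), then \(\sum(\diam V_i)^t\le\delta^{t-s}\sum(\diam V_i)^s\). Hence \(\Haus^t_\delta(A)\le\delta^{t-s}\Haus^s_\delta(A)\), so \(\Haus^s(A)<\infty\) forces \(\Haus^t(A)=0\). This shows that \(\{t>0:\Haus^t(A)=0\}\) is upward closed.
\begin{itemize}
\item If \(t>\dimH A\), some \(t'<t\) in this set exists by the definition of the infimum, so \(\Haus^t(A)=0\) by upward closure.
\item If \(0<t<\dimH A\) and \(\Haus^t(A)=0\), then \(t\) would be in the set, giving \(\dimH A\le t\), a contradiction. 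So \(\Haus^t(A)>0\).
\end{itemize}

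\textbf{Item (b).} For countable subadditivity, fix \(\delta\) and \(\varepsilon>0\), and choose covers of each \(A_n\) with \(\delta\)-sum at most \(\Haus^t_\delta(A_n)+\varepsilon2^{-n}\). Their union covers \(\bigcup_nA_n\), which gives subadditivity of \(\Haus^t_\delta\); then let \(\delta\downarrow0\).

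A singleton is covered by one set of arbitrarily small diameter, so it has measure zero. Subadditivity then shows that countable sets are null.

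For the dimension formula:
\begin{itemize}
\item Monotonicity of \(\Haus^t\) under inclusion gives the inequality \(\geq\).
\item For \(\leq\), take any \(t>\sup_n\dimH A_n\). Each \(\Haus^t(A_n)=0\) by (a), so \(\Haus^t(\bigcup_nA_n)=0\) by subadditivity. Hence \(\dimH\bigcup_nA_n\le t\).
\item The case where the supremum is \(0\) is handled the same way, with \(t>0\) arbitrary.
\end{itemize}

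\textbf{Item (c): mass distribution principle.}
\begin{enumerate}
\item Fix \(\delta>0\) and a cover \(\{V_i\}\) of \(K\) with \(\diam V_i\le\delta\). We may discard sets with \(\diam V_i=0\): these are at most singletons, and they carry no \(\mu\)-mass because \(\mu\) is atomless.
\item Each remaining \(V_i\) lies in a closed cube \(Q_i\) of side \(\lambda_i=\diam V_i\) (project onto the axes). So \(\mu(V_i)\le\mu(Q_i)\le C(\diam V_i)^t\).
\item Then \(1=\mu(K)\le\sum_i\mu(V_i)\le C\sum_i(\diam V_i)^t\).
\item Hence \(\Haus^t_\delta(K)\ge C^{-1}\) for every \(\delta\), and so \(\Haus^t(K)\ge C^{-1}\).
\end{enumerate}

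The only points requiring care are the measurability of the \(V_i\) and the zero-diameter pieces. Measurability is avoided by using the outer measure, or by replacing \(V_i\) with the closed cube \(Q_i\) directly; the zero-diameter pieces are handled by atomlessness as above. I expect no real obstacle here.
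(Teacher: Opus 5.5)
Your proposal is correct and follows the same route as the paper: (a) and (b) come from monotonicity of \(\Haus^t\) in the exponent and countable subadditivity. For (c) you use the same mass distribution argument: enclose each positive-diameter cover set in a closed cube of side \(\diam V_i\), discard the zero-diameter sets by atomlessness, sum to get \(1\le C\sum_i(\diam V_i)^t\) via outer measure, and take infima; you simply spell out routine details (the estimate \(\Haus^t_\delta\le\delta^{t-s}\Haus^s_\delta\) and the \(\varepsilon2^{-n}\) covers) that the paper leaves implicit.
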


\begin{proof}
The monotonicity of Hausdorff measures in the exponent gives (a), and
countable subadditivity gives (b).
For (c), any set \(V\) of positive diameter \(\lambda\) lies in a closed
cube of side \(\lambda\), so
\(\mu^*(V)\le C(\diam V)^t\).
Sets of diameter zero have zero mass.
For a countable cover of \(K\), summing these inequalities gives
\(1\le C\sum_i(\diam V_i)^t\), and taking infima proves the assertion.
\end{proof}

A pregeometry on a set \(X\) is an extensive, monotone, idempotent map
\(\cl:\mathcal P(X)\to\mathcal P(X)\) with finite character and exchange:
\[
 a\in\cl(A\cup\{b\})\setminus\cl(A)
       \quad\Longrightarrow\quad b\in\cl(A\cup\{a\}).
\]
A set \(A\) is independent if \(a\notin\cl(A\setminus\{a\})\) for every
\(a\in A\), and is a basis if it is independent and \(\cl(A)=X\).

\begin{lemma}\label{lem:independence}
\begin{enumerate}[label=\textup{(\alph*)}]
\item A set is independent if and only if every finite subset is independent.
\item If \(A\) is independent and \(b\notin\cl(A)\), then \(A\cup\{b\}\) is independent.
\item The union of a chain of independent sets is independent.
\end{enumerate}
\end{lemma}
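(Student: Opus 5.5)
Each part follows directly from finite character together with exchange, and I will prove them in the order (a), (b), (c), since (c) uses (a).

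For (a), one direction is immediate. Suppose \(A\) is independent and \(B\subseteq A\) is finite. For \(a\in B\), monotonicity gives \(\cl(B\setminus\{a\})\subseteq\cl(A\setminus\{a\})\), and \(a\) is not in the latter, so \(B\) is independent. Conversely, suppose \(a\in\cl(A\setminus\{a\})\) for some \(a\in A\). By finite character there is a finite \(B_0\subseteq A\setminus\{a\}\) with \(a\in\cl(B_0)\). Then the finite set \(B=B_0\cup\{a\}\) is dependent, because \(B\setminus\{a\}=B_0\).

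For (b), let \(A\) be independent and \(b\notin\cl(A)\); since \(\cl\) is extensive, \(b\notin A\). The new element is fine: \(b\notin\cl((A\cup\{b\})\setminus\{b\})=\cl(A)\). Now fix \(a\in A\), put \(A'=A\setminus\{a\}\), and suppose toward a contradiction that \(a\in\cl(A'\cup\{b\})\). Independence of \(A\) gives \(a\notin\cl(A')\). So \(a\in\cl(A'\cup\{b\})\setminus\cl(A')\), and exchange yields \(b\in\cl(A'\cup\{a\})=\cl(A)\), contradicting the choice of \(b\).

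For (c), let \(A=\bigcup_i A_i\) be the union of a chain of independent sets. By (a) it suffices to show that every finite \(B\subseteq A\) is independent. Since the \(A_i\) form a chain and \(B\) is finite, \(B\) lies inside a single \(A_i\). By (a) applied to \(A_i\), the set \(B\) is independent, and we are done.

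The only step with real content is the use of exchange in (b). There the hypothesis \(a\notin\cl(A')\) must be available so that the exchange axiom applies in exactly the stated form.
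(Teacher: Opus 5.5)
Your proof is correct and follows the same route as the paper: finite character (with monotonicity) gives (a), and (a) plus the chain property gives (c). For (b), exchange together with the independence of \(A\) yields \(b\in\cl(A)\), the same contradiction the paper uses; you also spell out the small checks, such as \(b\notin A\), that the paper leaves implicit.
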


\begin{proof}
Finite character proves (a), and then (c).
For (b), if some \(a\in A\) belonged to
\(\cl((A\setminus\{a\})\cup\{b\})\), exchange and the independence of
\(A\) would imply \(b\in\cl(A)\), a contradiction.
\end{proof}

\section{Coded Cantor sets}
\label{sec:cantor-sets}

This section provides the geometric constructions used in both main
theorems.  The first construction controls the images of a Cantor set
under countably many partial maps of a Euclidean space.  The second controls
a point and its reflection in every Euclidean dimension.  In both
constructions, sparse binary digits of the images encode a prescribed
real without changing the desired Hausdorff dimension.

\begin{definition}\label{def:approximation}
For an integer $k\geq 1$ and a real $t>0$, let $E_k(t)$ be the set of
$x\in\R^k$ such that, for infinitely many integers $n\geq 1$, there is
$v\in\mathbb Z^k$ with
\[
   \|x-2^{-n}v\|_\infty\leq 2^{-\lceil kn/t\rceil}.
\]
For $s\in[0,k]$, put
\[
   U_k(s)=\bigcap\{E_k(t):t\in\Q,\ t>s\}.
\]
\end{definition}

\begin{lemma}\label{lem:approximation}
The following statements hold for every $k\geq 1$.
\begin{enumerate}[label=\textup{(\alph*)}]
\item If $t\geq k$, then $E_k(t)=\R^k$.  In particular,
      $U_k(k)=\R^k$.
\item If $0<t\leq t'$, then $E_k(t)\subseteq E_k(t')$.  If
      $0\leq s\leq s'\leq k$, then $U_k(s)\subseteq U_k(s')$.
\item The relations
\[
 \{(k,t,x):k\geq1,\ t\in\Q_{>0},\ x\in E_k(t)\}
 \quad\hbox{and}\quad
 \{(k,s,x):k\geq1,\ s\in[0,k],\ x\in U_k(s)\}
\]
      are arithmetic under the fixed coding by cuts.
\item If $0<t<k$ and $t'>t$, then
      $\mathcal H^{t'}(E_k(t))=0$.  Consequently
\[
       \dimH E_k(t)\leq t \quad(t>0),
       \qquad \dimH U_k(s)\leq s \quad(0\leq s\leq k).
\]
\end{enumerate}
\end{lemma}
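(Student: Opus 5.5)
We treat the four items in turn, with (d) as the only substantive one; the main step there is the standard limsup covering estimate.

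For (a), fix $t\ge k$ and $x\in\R^k$. For each $n\ge1$ choose $v\in\Z^k$ coordinatewise with $|x_i-2^{-n}v_i|\le2^{-n-1}$. We need $2^{-n-1}\le2^{-\lceil kn/t\rceil}$. Since $kn/t\le n$ we have $\lceil kn/t\rceil\le n<n+1$, so the inequality holds for every $n$, and hence $x\in E_k(t)$. Consequently every rational $t>k$ satisfies $E_k(t)=\R^k$. The intersection defining $U_k(k)$ therefore consists of copies of $\R^k$ and equals $\R^k$.

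For (b), if $t\le t'$ then $\lceil kn/t'\rceil\le\lceil kn/t\rceil$. Any witness inequality for $t$ is thus also one for $t'$, giving $E_k(t)\subseteq E_k(t')$. For $s\le s'$, every rational $t>s'$ is also $>s$, so $U_k(s)$ is an intersection over a larger family of sets and $U_k(s)\subseteq U_k(s')$.

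For (c), the condition $\|x-2^{-n}v\|_\infty\le2^{-m}$ is a finite conjunction of non-strict comparisons of the coordinates $x_i$ with rationals. Each such comparison is $\Pi^0_1$ in $\Cut(x)$, by Lemma~\ref{lem:cuts}(b),(c) or directly from the cut. Membership in $E_k(t)$ is then a $\forall N\exists n>N\exists v$ statement over this matrix, hence arithmetic uniformly in $k$, $t$ and the cut. Validity of the cut is arithmetic by Lemma~\ref{lem:cuts}(a). For $U_k(s)$, the quantifier ``for all rational $t$, if $t>s$ then $x\in E_k(t)$'' is a number quantifier, and $t>s$ is $\Sigma^0_1$ in $\Cut(s)$. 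So the whole relation is arithmetic.

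For (d), fix $0<t<k$ and $t'>t$. It suffices to work in a unit cube $[-M,M]^k$ and take the countable union over $M$, using Lemma~\ref{lem:hausdorff}(b). For each $N$, the set $E_k(t)\cap[-M,M]^k$ is covered by the cubes centred at $2^{-n}v$, for $n\ge N$ and $v\in\Z^k\cap2^n[-M-1,M+1]^k$, with side $2\cdot2^{-\lceil kn/t\rceil}$. At level $n$ there are at most $C_M2^{kn}$ such cubes, each of diameter at most $C2^{-kn/t}$. Their contribution to the $\Haus^{t'}$ sum is therefore at most
\[
C'\,2^{kn}\,2^{-knt'/t}=C'\,2^{-kn(t'/t-1)}.
\]
This is a geometric series in $n$, so the tail from $N$ onward tends to $0$ as $N\to\infty$, and the diameters tend to $0$ as well. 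Hence $\Haus^{t'}(E_k(t))=0$. By the definition of dimension this gives $\dimH E_k(t)\le t$ for $0<t<k$. For $t\ge k$ the bound is trivial, since $\dimH\R^k=k\le t$.

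Finally, $U_k(s)\subseteq E_k(t)$ for every rational $t>s$, so $\dimH U_k(s)\le t$ for all such $t$. Letting $t\downarrow s$ through rationals gives $\dimH U_k(s)\le s$; this also covers $s=0$.
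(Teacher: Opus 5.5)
Your proposal is correct and follows the paper's proof essentially step for step: rounding to the lattice $2^{-n}\Z^k$ for (a), monotonicity of the exponent $\lceil kn/t\rceil$ for (b), number quantifiers over cut comparisons for (c), and, for (d), covering $E_k(t)\cap[-M,M]^k$ at each level $n\ge N$ by at most $C_M2^{kn}$ cubes of diameter $O(2^{-kn/t})$, then summing the geometric tail. The one cosmetic difference is that you obtain $\dimH U_k(s)\le s$ by letting rational $t\downarrow s$ (using the trivial bound $\dimH E_k(t)\le k\le t$ when $t\ge k$), whereas the paper uses rationals $t\in(s,k)$ and treats $s=k$ separately; this is harmless, as is calling $[-M,M]^k$ a ``unit cube''.
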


\begin{proof}
If $t\geq k$, then $\lceil kn/t\rceil\leq n$.  Rounding each
coordinate of $2^n x$ to a nearest integer gives
$\|x-2^{-n}v\|_\infty\leq2^{-n-1}$, proving the first assertion.
The inclusions in the second assertion follow directly from the
monotonicity of the exponents and the definition of $U_k(s)$.

For rational $t>0$, the integer $\lceil kn/t\rceil$ is computable from
$k,n,t$.  Comparisons of a coordinate of $x$ with a rational number
are arithmetic in its cut.  The displayed approximation condition,
with its integer quantifiers over $n$ and $v$, is therefore arithmetic,
uniformly in $k,t,x$.  Also, $s<t$ is an arithmetic condition in
$\Cut(s)$ and the rational $t$.  Hence
\[
 x\in U_k(s)
 \quad\Longleftrightarrow\quad
 (\forall t\in\Q_{>0})\,[s<t\ \Longrightarrow\ x\in E_k(t)]
\]
is arithmetic, proving the third assertion.

To prove the last assertion, fix $M\in\mathbb N$, $0<t<k$, and
$t'>t$.  At approximation level $n$, the relevant
points in $[-M,M]^k$ are covered by cubes of side
$2\cdot2^{-\lceil kn/t\rceil}$ centered at $2^{-n}v$, with
$\|2^{-n}v\|_\infty\leq M+1$.  There are at most
\[
 (2^{n+1}(M+1)+1)^k\leq (2M+3)^k2^{kn}
\]
such centers, and each cube has Euclidean diameter at most
$2\sqrt{k}\,2^{-kn/t}$.  For every $n_0$, the cubes at levels
$n\geq n_0$ cover $E_k(t)\cap[-M,M]^k$, and their total
$t'$-cost is at most
\[
 (2M+3)^k(2\sqrt{k})^{t'}
 \sum_{n\geq n_0}2^{kn(1-t'/t)}.
\]
This tends to zero as $n_0\to\infty$, and so do the diameters of the
cubes.  Thus $\mathcal H^{t'}(E_k(t)\cap[-M,M]^k)=0$.
Taking the countable union over $M$ proves the assertion about
$E_k(t)$.  For $t\geq k$, its dimension bound is immediate.
If $s<k$, then $U_k(s)\subseteq E_k(t)$ for every rational
$t\in(s,k)$, whence $\dimH U_k(s)\leq s$.  The case $s=k$ follows
from the ambient dimension.
\end{proof}

Fix an integer $k\geq1$.  Lipschitz estimates use the maximum norm.
For a partial map $f:D_f\to\R^k$, we use alternatives
\textup{(a)--(c)} of \textup{(F3)}, with $f$ in place of $h$.
An alternative holds at $x$ if it holds on some open neighborhood
of $x$. At most one alternative can hold at a given point.

We say that $f$ satisfies \emph{quantitative trimming}, abbreviated
\textup{(QT)}, if there is $\varepsilon_f>0$ such that every
nondegenerate compact cube $P$ contains a compact cube $Q$ with
$\ell(Q)\geq\varepsilon_f\ell(P)$ that has an open neighborhood
on which one of \textup{(a)--(c)} holds for $f$. Thus condition (F3) of
Definition~\ref{def:function-family} says that every unary section of
a map in the family satisfies \textup{(QT)}. Note that $f$ need not be
injective on a retained cube $Q$ in case \textup{(c)}, since it is only
required to be locally a bi-Lipschitz homeomorphism near $Q$; the next
lemma shows that it is injective on all sufficiently small subcubes.

\begin{lemma}[Uniform charts on compact cubes]
\label{lem:compact-bilipschitz-charts}
Suppose that $f$ is locally a bi-Lipschitz homeomorphism onto an
open image on a neighborhood of a compact cube $Q$.  There are
$\rho>0$ and $\kappa\geq1$ such that every compact cube
$P\subseteq Q$ with $\ell(P)\leq\rho$ has an open neighborhood
on which $f$ is a $\kappa$-bi-Lipschitz homeomorphism onto an
open image.  Common constants can be chosen for any finite
family of such compact cubes.
\end{lemma}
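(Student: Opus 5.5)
The plan is a compactness argument with a Lebesgue number. By hypothesis there is an open set $W\supseteq Q$ such that every $x\in W$ has an open neighborhood $V_x\subseteq W$ on which $f$ is a bi-Lipschitz homeomorphism onto an open set. Let $\kappa_x$ be a bi-Lipschitz constant for $f|_{V_x}$, measured in the maximum norm. For each $x\in Q$ we choose an open max-norm ball $B_x=B(x,r_x)$ with $B(x,2r_x)\subseteq V_x$. The balls $B_x$ cover $Q$, so by compactness finitely many of them, $B_{x_1},\dots,B_{x_N}$, already cover $Q$. We put $\kappa=\max_j\kappa_{x_j}\ge1$.

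Next we take a Lebesgue number for the cover: choose $\rho>0$ such that every subset of $Q$ of max-norm diameter at most $\rho$ lies in a single $B_{x_j}$. This exists because $Q$ is a compact metric space covered by finitely many open sets. Let $P\subseteq Q$ be a compact cube with $\ell(P)\le\rho$. Its max-norm diameter is $\ell(P)\le\rho$, so $P\subseteq B_{x_j}$ for some $j$, and the open set $U=B_{x_j}\subseteq V_{x_j}$ is a neighborhood of $P$. The restriction $f|_U$ is a restriction of the $\kappa_{x_j}$-bi-Lipschitz homeomorphism $f|_{V_{x_j}}$ onto the open set $f(V_{x_j})$. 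It is therefore injective and $\kappa$-bi-Lipschitz. Its image $f(U)$ is open, since a homeomorphism between open subsets of $\R^k$ carries open subsets to open subsets (it is the preimage of $U$ under the continuous inverse, intersected with the open set $f(V_{x_j})$). So $f|_U$ is a $\kappa$-bi-Lipschitz homeomorphism onto an open image, as required. The enlarged balls $B(x,2r_x)$ are not actually needed for this; they just leave room if one prefers a slightly larger neighborhood of $P$.

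For a finite family $Q_1,\dots,Q_m$, we apply the above to each $Q_i$ and obtain constants $(\rho_i,\kappa_i)$. Then $\rho=\min_i\rho_i$ and $\kappa=\max_i\kappa_i$ work for all of them simultaneously, since decreasing $\rho$ and increasing $\kappa$ preserve the conclusion.

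The only delicate point is a terminology check. We need that ``locally a bi-Lipschitz homeomorphism onto an open set'' gives, at each point, a neighborhood on which $f$ is injective, bi-Lipschitz and open. Restricting to subneighborhoods preserves all three properties, which is exactly what makes the Lebesgue-number step valid.
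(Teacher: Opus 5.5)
Your proof is correct and follows essentially the same route as the paper's: a finite subcover of $Q$ by chart domains, $\kappa$ as the maximum of the finitely many bi-Lipschitz constants, $\rho$ from a max-norm Lebesgue number (a cube's max-norm diameter is its side length), and the minimum/maximum of the constants for a finite family of cubes. Your check that restricting a chart to a smaller open set keeps it injective, $\kappa$-bi-Lipschitz, and with open image is a detail the paper leaves implicit.
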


\begin{proof}
Cover $Q$ by finitely many open chart domains and take the maximum
of their Lipschitz and inverse Lipschitz constants.
A Lebesgue number for this finite cover of $Q$, in the maximum
norm, gives $\rho>0$ such that every subset of $Q$ of diameter
at most $\rho$ lies in one chart.  A cube has maximum-norm
diameter equal to its side length.  For finitely many cubes,
take the minimum of the radii and the maximum of the constants.
\end{proof}

\begin{lemma}[Cubes in images and inverse images]
\label{lem:bilipschitz-cubes}
Let $P$ be a compact cube of side $L>0$, and let $g$ be a
$\kappa$-bi-Lipschitz homeomorphism on an open neighborhood of
$P$, with open image, where $\kappa\geq1$.
\begin{enumerate}[label=\textup{(\alph*)}]
\item The interior of $g(P)$ contains a compact cube of side
$L/(2\kappa)$.
\item If $W\subseteq g(P)$ is a compact cube of side $w>0$,
with center $y$, and $x=g^{-1}(y)$, the cube centered at $x$
of side $w/(4\kappa)$ lies in the interior of $P$ and is
mapped into $W$.
\end{enumerate}
\end{lemma}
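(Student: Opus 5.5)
The plan is to work throughout in the maximum norm, in which a compact cube of side \(L\) centred at \(c\) is the closed ball \(\{x:\|x-c\|_\infty\le L/2\}\). Let \(U\supseteq P\) be the open set on which \(g\) is a \(\kappa\)-bi-Lipschitz homeomorphism onto an open image, so that
\[
\kappa^{-1}\|x-x'\|_\infty\le\|g(x)-g(x')\|_\infty\le\kappa\|x-x'\|_\infty
\qquad(x,x'\in U).
\]
Both parts rest on one topological observation, which I would prove first: \(g\) maps interior points of \(P\) to interior points of \(g(P)\), and boundary points of \(P\) to boundary points of \(g(P)\).

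For the first half, \(g\) is a homeomorphism of \(U\) onto an open set. Hence \(g(\operatorname{int}P)\) is open in \(\R^k\), and so it lies in the interior of \(g(P)\). For the second half, take \(p\in\partial P\). There are points \(x_j\in U\setminus P\) with \(x_j\to p\), and then \(g(x_j)\to g(p)\). Injectivity of \(g\) on \(U\) gives \(g(x_j)\notin g(P)\). Hence \(g(p)\in\partial g(P)\), so \(g(p)\) is not an interior point of \(g(P)\).

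For (a), let \(c\) be the centre of \(P\). I claim the open ball \(B\) of radius \(L/(2\kappa)\) about \(g(c)\) lies in \(g(\operatorname{int}P)\). Suppose some \(y\in B\) lies outside this set. Follow the segment from \(g(c)\) to \(y\), and let \(z\) be the first point of the segment not in the open set \(g(\operatorname{int}P)\). Then \(z\) lies in the closure of \(g(\operatorname{int}P)\), and this closure is contained in the compact set \(g(P)\). So \(z=g(x)\) with \(x\in P\). By the choice of \(z\), \(x\notin\operatorname{int}P\), so \(x\in\partial P\) and \(\|x-c\|_\infty=L/2\). The lower Lipschitz bound then gives
\[
\|z-g(c)\|_\infty\ge\frac{L}{2\kappa}.
\]
But \(z\) lies on the segment, so \(\|z-g(c)\|_\infty\le\|y-g(c)\|_\infty<L/(2\kappa)\), a contradiction. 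Consequently the compact cube of side \(L/(2\kappa)\) centred at \(g(c)\) has half-side \(L/(4\kappa)\). It is therefore contained in \(B\), and so in \(g(\operatorname{int}P)\), which lies in the interior of \(g(P)\).

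For (b), first note \(x\in P\), since \(y\in W\subseteq g(P)\). The interior of \(W\) is an open subset of \(g(P)\), so it lies in the interior of \(g(P)\). By the observation it therefore contains no point \(g(p)\) with \(p\in\partial P\). Hence
\[
\|g(p)-y\|_\infty\ge w/2 \quad\text{for all } p\in\partial P,
\]
and the lower Lipschitz bound gives \(\|p-x\|_\infty\ge w/(2\kappa)\). In particular \(x\in\operatorname{int}P\). The open ball of radius \(w/(2\kappa)\) about \(x\) is connected, meets \(\operatorname{int}P\), and avoids \(\partial P\), so it lies in \(\operatorname{int}P\). The cube centred at \(x\) of side \(w/(4\kappa)\) has half-side \(w/(8\kappa)\), so it sits inside this ball and hence in \(\operatorname{int}P\subseteq U\). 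For \(x'\) in this cube the upper Lipschitz bound gives
\[
\|g(x')-y\|_\infty\le\kappa\cdot\frac{w}{8\kappa}=\frac{w}{8}\le\frac{w}{2},
\]
so \(g(x')\in W\).

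The only step needing care is the boundary-to-boundary observation, which uses injectivity of \(g\) on the open neighbourhood \(U\), not merely on \(P\). Everything else is a direct Lipschitz estimate.
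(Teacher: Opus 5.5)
Your proof is correct and is essentially the paper's argument: the paper also uses $\partial g(P)=g(\partial P)$, the inverse Lipschitz bound to keep $g(a)$ at distance at least $L/(2\kappa)$ from $\partial g(P)$ and $x$ at distance at least $w/(2\kappa)$ from $\partial P$, and the forward Lipschitz bound $\|g(u)-y\|_\infty\le w/8$. The only difference is that you spell out the step from a distance-to-boundary bound to containment in the interior, using the first-exit point on a segment and the connectedness of the ball around $x$; the paper leaves that step implicit.
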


\begin{proof}
Let $a$ be the center of $P$.  The homeomorphism property gives
$\partial g(P)=g(\partial P)$, and inverse Lipschitzness gives
\[
 \operatorname{dist}_\infty(g(a),\partial g(P))
       \geq L/(2\kappa).
\]
The cube centered at $g(a)$ of side $L/(2\kappa)$ therefore
lies in the interior of $g(P)$.

For (b), the distance from $y$ to $\partial g(P)$ is at least
$w/2$, because $W\subseteq g(P)$.  For every $b\in\partial P$,
\[
 \|x-b\|_\infty
 \geq\kappa^{-1}\|g(x)-g(b)\|_\infty
 \geq w/(2\kappa).
\]
Thus the proposed input cube lies in the interior of $P$.
For each of its points $u$,
$\|g(u)-y\|_\infty\leq\kappa\|u-x\|_\infty\leq w/8$,
so its image lies in $W$.
\end{proof}

The image restrictions and pullbacks in the following construction are
closely related to Richter's scalar-image coding
\cite[Sections~4.2 and~5]{Ric25}. Here we retain a compact family of
candidates of exact Hausdorff dimension $s$, while ensuring that the
prescribed real is recoverable from the jump of every image value
arising under \textup{(c)}.

\begin{proposition}[Simultaneous coded images]
\label{prop:general-cantor}
Let $k\geq1$, and let $(f_n)_{n\in\omega}$ be a countable family
of partial maps $\R^k\rightharpoonup\R^k$, with
$f_0=\mathrm{id}_{\R^k}$, each satisfying \textup{(QT)}.
For every $0<s<k$ and $c\in2^\omega$, there is a compact
perfect $K\subseteq[0,1]^k$ such that:
\begin{enumerate}[label=\textup{(\roman*)}]
\item $\dimH K=s$ and $K\subseteq E_k(s)$;
\item for every $x\in K$ and every $n$, one of \textup{(a)--(c)}
      holds for $f_n$ at $x$;
\item if $x\in K$ and \textup{(c)} holds for $f_n$ at $x$, then
\[
       f_n(x)\in E_k(s),
       \qquad c\leqT\Cut(f_n(x))'.
\]
\end{enumerate}
\end{proposition}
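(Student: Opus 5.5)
We build \(K\) as the intersection of a nested sequence of finite unions of compact cubes, \(K=\bigcap_j K_j\), organised in stages. The maps are handled by a dovetailing schedule in which each \(f_n\) is treated at infinitely many stages, and the stages alternate between two kinds of operation. In a \emph{trimming stage} for \(f_n\), we apply \textup{(QT)} inside each current cube \(P\). This gives a subcube \(Q\) with \(\ell(Q)\ge\varepsilon_{f_n}\ell(P)\) on a neighbourhood of which one of \textup{(a)--(c)} holds for \(f_n\). Shrinking further by Lemma~\ref{lem:compact-bilipschitz-charts}, in case \textup{(c)} we may assume that \(f_n\) is a \(\kappa\)-bi-Lipschitz homeomorphism near \(Q\), with \(\kappa\) uniform over the finitely many current cubes. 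After the first trimming stage for \(f_n\), every later cube inherits its alternative; this gives \textup{(ii)}. In a \emph{coding stage} we work in image coordinates. For every active pair \((n,P)\) with \(f_n\) in case \textup{(c)} on \(P\), we choose dyadic cubes \(W\subseteq f_n(P)\), which exist by Lemma~\ref{lem:bilipschitz-cubes}(a). We choose them so that the image point is forced to satisfy an approximation \(\|f_n(x)-2^{-m}v\|_\infty\le 2^{-\lceil km/s\rceil}\) at a new large level \(m\). Since \(f_0=\mathrm{id}\), this also yields \(K\subseteq E_k(s)\). We then pull the cube back by Lemma~\ref{lem:bilipschitz-cubes}(b) to a subcube of side \(w/(4\kappa)\).

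For the coding of \(c\), we reserve a sparse set of dyadic digit positions \(m_0<m_1<\cdots\), chosen fast-growing relative to all constants in play. At position \(m_i\) the binary digit of the first coordinate of \(f_n(x)\), read inside a fixed cell structure, is forced to equal \(c(i)\). We keep it safely away from dyadic boundaries by choosing \(W\) strictly inside the corresponding half-cell with a margin, so that the digit is determined by strict rational comparisons. Then \(\Cut(f_n(x))\) computes the digits at positions where the value is not dyadic, and the jump \(\Cut(f_n(x))'\) certainly decides them, via Lemma~\ref{lem:cuts}(d). The decoder must also know the positions \(m_i\) and which cells are used, and these depend on the non-effective constants \(\varepsilon,\kappa\). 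To get around this, we put a self-delimiting marker into the digits: for example, long blocks of zeros followed by a one, flagging where a coded bit sits. The decoder, using the jump, scans for the marker pattern and reads off the coded bits. Forcing a single digit costs only a bounded factor in side length, so it does not affect dimension.

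Exact dimension. The upper bound \(\dimH K\le s\) follows from \(K\subseteq E_k(s)\) and Lemma~\ref{lem:approximation}(d). For the lower bound we put the natural measure \(\mu\) on the tree of cubes and apply Lemma~\ref{lem:hausdorff}(c) with exponent \(t<s\); doing this for each rational \(t<s\) gives \(\dimH K\ge s\). Between the coding and trimming events we insert long \emph{free stages}. In each of these every cube of side \(\lambda\) is split into \(2^{kN}\) subcubes of side \(\lambda2^{-N}\), and we keep a fraction of them chosen to have exponent \(s\): roughly \(2^{sN}\) children, spread out in a grid. The losses at trimming and coding stages are bounded factors (\(\varepsilon_{f_n}\), \(1/(8\kappa)\), the digit constants), and the approximation stages cost a collapse from scale \(2^{-m}\) to \(2^{-km/s}\). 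All of these can be absorbed by making the intervening free stages long enough. Choosing the stage lengths inductively, after the constants at each stage are known, yields \(\mu(Q)\le C_t\lambda^t\) for every cube of side \(\lambda\). Perfectness comes from branching at least twice in every free stage.

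The main obstacle is reconciling the approximation requirement with the dimension bound. Forcing \(f_n(x)\in E_k(s)\) for every active \(n\) means collapsing to scale \(2^{-\lceil km/s\rceil}\) at infinitely many levels \(m\). The measure estimate must still hold at every intermediate scale, not only at the stage ends. We therefore schedule the collapses for different \(n\) at widely separated levels. Before each collapse we arrange, by a free stage, that the current mass at scale \(2^{-m}\) is about \(2^{-sm}\), so the collapse is exactly critical, consistent with \(E_k(s)\) having dimension \(s\). The delicate calculation is the verification that \(\mu(Q)\lesssim\ell(Q)^t\) for cubes whose side lies between \(2^{-km/s}\) and \(2^{-m}\). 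Here the bi-Lipschitz distortion by \(\kappa\) is what makes a bounded-loss estimate possible. Uniformity is handled by letting each later stage be chosen after the earlier constants are fixed.
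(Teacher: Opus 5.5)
Your overall architecture matches the paper's: a single \textup{(QT)} trimming per map, uniform charts, coding in image coordinates with pullback by Lemma~\ref{lem:bilipschitz-cubes}, $f_0=\mathrm{id}$ giving $K\subseteq E_k(s)$, and the natural measure. However, the dimension bookkeeping fails exactly at the point you call the main obstacle.

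You let free stages of exponent $s$ bring the cubes down to scale $2^{-m}$ with mass about $2^{-sm}$, and then collapse to side about $2^{-km/s}$. The collapse produces at most boundedly many children per cube. So afterwards there are still about $2^{sm}$ construction cubes, each of mass about $2^{-sm}=(2^{-km/s})^{s^2/k}$. Hence $\mu(Q)\le C_t\ell(Q)^t$ fails for every $t>s^2/k$. Covering $K$ by these cubes gives $\Haus^t(K)=0$ for $t>s^2/k$, so $\dimH K\le s^2/k<s$.

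The collapse is not a bounded loss. Its cost grows linearly in $m$, and lengthening the free stages only increases $m$, so it cannot be absorbed. For the collapse to be critical, a cube at scale $2^{-m}$ must carry mass $2^{-km+o(m)}$. That is, the branching down to scale $2^{-m}$ must be essentially full-dimensional: essentially every point of $2^{-m}\Z^k$ in the current image cube must be the base of a window. In addition, the levels must grow so fast that the previous scale is negligible.

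This is how the paper proceeds, with no separate free stages. At stage $j$ the children are the pullbacks of \emph{all} selected windows $g+[w_j,2w_j]^k$ inside a cube of side $\ell(P)/(2j^2)$ in the image. Here $g$ ranges over a translate of $h_j\Z^k$ with $h_j=2^{j-N_j}$, only $2^j$ times coarser than $2^{-N_j}$. With $N_j=2^{j^2}$ this gives $\log_2q_j\ge N_j-(k/s)N_{j-1}-4j$ and children of side about $2^{-(k/s)N_j}$, so the exponent of each stage tends to $s$.

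The decoding is also not established. You choose the levels after the non-effective constants of all maps are known. So the coding positions for $f_n$ depend on infinitely many non-effective parameters and cannot be hard-wired into the reduction, even non-uniformly. Your marker scheme does not repair this. The digits of $f_n(x)$ at levels devoted to other maps or to free stages are images under $f_n$ of choices made in the domain, and they are uncontrolled. So spurious markers cannot be excluded. Indeed, your own approximation windows already produce a long block of zeros followed by a one when $f_n(x)$ lies just above $2^{-m}v$.

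The paper instead fixes computable scales $N_j$ in advance and absorbs the non-effective constants by delay. It introduces $f_n$ only at a stage $j_n\ge\varepsilon_n^{-1}$ with $e(j_n)=n$. It uses $f_n$ only once $j\ge\kappa_n$ and $\ell(\widetilde P)\le\rho_n$. All losses are then polynomial in $j$ and negligible against $N_j$.

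At every stage $j$ with $e(j)=n$, the paper writes the entire prefix $c_{<j}$ as $\cell_{N_j}(y_1)\bmod 2^j$. All but finitely many such stages are active for $y=f_n(x)$. So each $c(i)$ is the limit of a $\Cut(y)$-computable sequence along the computable set $\{j:e(j)=n\}$. The jump in $c\leqT\Cut(y)'$ is used exactly for this limit. It is not needed for dyadic-boundary issues, which the window margins already remove.
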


\begin{proof}
Choose witnesses $0<\varepsilon_n\leq1$, and put
\[
 \lambda=\frac{k}{s}>1,\quad
 N_j=2^{j^2},\quad
 w_j=2^{-\lceil\lambda N_j\rceil-3},\quad
 h_j=2^{j-N_j},\quad
 c_{<j}=\sum_{i<j}c(i)2^i.
\]
For $j\geq1$,
\begin{equation}\label{eq:euclidean-window-bounds}
       2^{-\lambda N_j-4}\leq w_j\leq2^{-N_j-4}.
\end{equation}
Let $e(j)$ be the exponent of $2$ dividing $j$.  Choose
$j_0\geq6$ with $2^{j_0}\geq\lambda$, and integers
$d_n\geq\varepsilon_n^{-1}$.  Introduce $f_n$ at stage
\[
       j_n=\min\{j\geq j_0:e(j)=n,\ j\geq d_n\}.
\]
The stages $j_n$ need not be computable; the decoding below uses only
the computable function $e$.

Use the lattice translate
\[
 G_j=\left\{2^{-N_j}v:
 \begin{array}{l}
 v\in\mathbb Z^k,\quad v_1\equiv c_{<j}\pmod{2^j},\\
 v_i\equiv0\pmod{2^j}\quad(2\leq i\leq k)
 \end{array}\right\},
\]
with spacing $h_j$ in each coordinate.
For $g\in G_j$, let $W_j(g)=g+[w_j,2w_j]^k$.

Construct finite families $\mathcal Q_j$ of compact cubes,
starting with $\mathcal Q_{j_0-1}=\{[0,1]^k\}$.
At stage $j\geq j_0$, put $n=e(j)$.
If $j=j_n$, in each parent $P$ choose by (QT) a retained cube
$\widetilde P\subseteq P$ with
$\ell(\widetilde P)\geq\varepsilon_n\ell(P)\geq\ell(P)/j$.
Record which of \textup{(a)--(c)} holds on its open neighborhood.
For the finitely many retained cubes in case \textup{(c)},
Lemma~\ref{lem:compact-bilipschitz-charts}
supplies common constants $\rho_n>0$ and $\kappa_n\geq1$.
If there are no such cubes, put $\rho_n=\kappa_n=1$.
At stages other than $j_n$, put $\widetilde P=P$.

Every descendant remains in its introduction cube.  Use
$g_{j,P}=f_n$ if $f_n$ has been introduced,
$\widetilde P$ lies in an introduction cube in case \textup{(c)},
$j\geq\kappa_n$, and $\ell(\widetilde P)\leq\rho_n$.
Otherwise use $g_{j,P}=\mathrm{id}$.
Then $g_{j,P}$ is a $j$-bi-Lipschitz homeomorphism
on an open neighborhood of $\widetilde P$, with open image,
and
\begin{equation}\label{eq:euclidean-working-map}
       \ell(\widetilde P)\geq\ell(P)/j.
\end{equation}
It is the condition $\ell(\widetilde P)\leq\rho_n$ that makes $f_n$
injective near $\widetilde P$; local invertibility alone would not
suffice on a large cube.

For $j\geq j_0-1$, put
\[
       \delta_j=\frac{w_j}{4j},
       \qquad\eta_j=\frac{h_j}{2j};
\]
for $j\geq j_0$, put
\[
       q_j=\left\lfloor
                  \frac{\delta_{j-1}}{4j^2h_j}
             \right\rfloor,\qquad m_j=q_j^k.
\]
All level-$j$ cubes will have side $\delta_j$.
Since $2^{2j-2}\geq\lambda$ and $2^{2j-2}\geq4j$,
\[
       N_j=2^{2j-1}N_{j-1}
          \geq(\lambda+4j)N_{j-1}.
\]
Equation~\eqref{eq:euclidean-window-bounds} gives
\begin{align}
 \log_2\frac{\delta_{j-1}}{j^2h_j}
 &\geq N_j-\lambda N_{j-1}-j-6-3\log_2j \notag\\
 &\geq3j-6-3\log_2j\geq3.
 \label{eq:euclidean-branch-numerics}
\end{align}
Therefore $q_j\geq2$ and
\begin{equation}\label{eq:euclidean-branch-bounds}
 \frac{\delta_{j-1}}{8j^2h_j}
       \leq q_j\leq
 \frac{\delta_{j-1}}{4j^2h_j}.
\end{equation}

Let $L=\ell(P)$ for a current parent.
Initially $L=1\geq\delta_{j_0-1}$; afterwards
$L=\delta_{j-1}$.
By Lemma~\ref{lem:bilipschitz-cubes}(a), the interior of
$g_{j,P}(\widetilde P)$ contains a compact cube $S_{j,P}$
of side $a=L/(2j^2)$.
Equation~\eqref{eq:euclidean-branch-numerics} gives $a\geq4h_j$.
In each coordinate, the admissible base points of windows
lying in $S_{j,P}$ range over an interval of length $a-w_j$
intersected with a translate of $h_j\mathbb Z$.
There are at least
\[
       \frac{a-w_j}{h_j}-1
       \geq\frac a{h_j}-2
       \geq\frac a{2h_j}
       \geq\frac{\delta_{j-1}}{4j^2h_j}
       \geq q_j
\]
such coordinate values.
Choose the $q_j$ smallest in each coordinate, giving exactly
$m_j=q_j^k$ windows.  For each selected window, let $y_g$
be its center and
$x_g=(g_{j,P}|\widetilde P)^{-1}(y_g)$.
Keep the cube centered at $x_g$ of side
$\delta_j=w_j/(4j)$.
By Lemma~\ref{lem:bilipschitz-cubes}(b), it lies in the
interior of $\widetilde P$ and maps into that window.
These are the children of $P$.

Distinct windows have maximum-norm distance at least
$h_j-w_j\geq h_j/2$.  The $j$-Lipschitz bound implies
distance at least $\eta_j=h_j/(2j)$ between children of
the same parent.  Children of distinct parents inherit
separation $\eta_{j-1}>\eta_j$.
Thus distinct cubes in $\mathcal Q_j$ have maximum-norm
distance at least $\eta_j$.
Each parent has exactly $m_j\geq2$ children, and
$\delta_j\to0$.  Consequently
\[
       K=\bigcap_{j\geq j_0}\bigcup\mathcal Q_j
\]
is nonempty, compact, and perfect.

The finitely many introduction cubes for $f_n$ cover $K$,
so, at every point of $K$, one of \textup{(a)--(c)} holds for $f_n$.
Fix $x\in K$ at which \textup{(c)} holds for $f_n$, and put
$y=f_n(x)$.  Its introduction cube is in case \textup{(c)}.
The sides of all subsequent parents tend to zero.
Hence at every sufficiently large stage $j$ with $e(j)=n$,
the parent of $x$ satisfies the activation conditions
$j\geq\kappa_n$ and $\ell(\widetilde P)\leq\rho_n$.
At each such stage the construction gives $v\in\mathbb Z^k$
with the prescribed residues and
\begin{equation}\label{eq:euclidean-coded-window}
       y-2^{-N_j}v\in[w_j,2w_j]^k.
\end{equation}
Thus
\[
       \|y-2^{-N_j}v\|_\infty
       \leq2w_j\leq2^{-\lceil kN_j/s\rceil},
\]
and $y\in E_k(s)$.
Moreover, because $0<w_j<2w_j<2^{-N_j}$,
\[
       \operatorname{cell}_{N_j}(y_1)\bmod2^j=c_{<j}.
\]
Each fixed bit is eventually correct along the computable
sequence $e(j)=n$, so $c\leqT\Cut(y)'$.
The identity gives $K\subseteq E_k(s)$, and hence
$\dimH K\leq s$ by Lemma~\ref{lem:approximation}.

For the reverse dimension bound, put $M_{j_0-1}=1$ and
$M_j=m_{j_0}\cdots m_j$.
Equal division of mass among children gives an atomless
Borel probability measure $\mu$ supported on $K$, with
$\mu(P)=1/M_j$ for $P\in\mathcal Q_j$.
Fix $0<t<s$.
By \eqref{eq:euclidean-branch-bounds},
\begin{align*}
 \log_2q_j
 &\geq N_j-\lambda N_{j-1}-j-9-3\log_2j\\
 &\geq N_j-\lambda N_{j-1}-4j,
\end{align*}
where $9+3\log_2j\leq3j$ for $j\geq6$.
For $j\geq j_0+1$, it follows that
\begin{equation}\label{eq:euclidean-mass-growth}
 \log_2M_{j-1}
 \geq k\log_2q_{j-1}
 \geq kN_{j-1}-k\lambda N_{j-2}-4k(j-1)
       =(k-o(1))N_{j-1}.
\end{equation}
Also,
\[
       \frac{\delta_{j-1}}{16j^3}
       \leq q_j\eta_j
       \leq\frac{\delta_{j-1}}{8j^3}<1,
\]
and therefore
\begin{equation}\label{eq:euclidean-mass-scale}
 0<-\log_2(q_j\eta_j)
 \leq\lambda N_{j-1}+4\log_2j+10
       =(\lambda+o(1))N_{j-1}.
\end{equation}
Since $t<s=k/\lambda$,
\[
 \log_2\bigl(M_{j-1}(q_j\eta_j)^t\bigr)
       \geq(k-\lambda t-o(1))N_{j-1}\longrightarrow\infty.
\]
Thus
\[
       B_t=\sup_{j\geq j_0}
       \bigl(M_{j-1}(q_j\eta_j)^t\bigr)^{-1}<\infty.
\]

Let $Q$ be a closed test cube of side $r$ with
$0<r<\eta_{j_0-1}$, and choose $j\geq j_0$ such that
$\eta_j\leq r<\eta_{j-1}$.
It meets at most one parent at level $j-1$.
It meets at most $(1+r/\eta_j)^k$ cubes at level $j$:
choose an intersection point from each cube and label it
by the integer coordinate vector obtained by dividing
its coordinates relative to the lower corner of $Q$
by $\eta_j$ and taking floors.
Two points with the same label have maximum-norm distance
less than $\eta_j$, so the labels are distinct.
There are at most $(1+\lfloor r/\eta_j\rfloor)^k$
labels, giving the asserted bound.
As $r\geq\eta_j$, this is at most $(2r/\eta_j)^k$.
Since $M_j=q_j^kM_{j-1}$,
\begin{align*}
 \mu(Q)
 &\leq\frac1{M_{j-1}}
       \min\left\{1,\left(\frac{2r}{q_j\eta_j}\right)^k\right\}\\
 &\leq\frac{2^t r^t}{M_{j-1}(q_j\eta_j)^t}
 \leq2^tB_t r^t.
\end{align*}
The middle inequality uses $0<t<k$ and
$\min\{1,a^k\}\leq a^t$ for $a\geq0$.
For $r\geq\eta_{j_0-1}$ use
$\mu(Q)\leq1\leq\eta_{j_0-1}^{-t}r^t$.
Lemma~\ref{lem:hausdorff}(c) now gives
$\mathcal H^t(K)>0$.
Since this holds for every $0<t<s$, it proves
$\dimH K\geq s$ and completes the proof.
\end{proof}

\begin{remark}\label{rem:general-cantor-decoding}
The decoding is uniform: a single Turing functional recovers $c$
from $\Cut(f_n(x))'$ and the index $n$, for every $x\in K$
at which \textup{(c)} holds for $f_n$. It reads only the first coordinate,
at the scales $N_j=2^{j^2}$ along the stages with $e(j)=n$, and so it
does not depend on $k$, on $s$, or on the chart data
$\rho_n,\kappa_n$.
\end{remark}

\begin{proposition}[Coded points and their reflections]
\label{prop:reflection-cantor}
For every integer $k\geq1$, every $d\in[0,k)$, every $z\in\R^k$,
and every $c\in2^\omega$, there is a compact set
$K=K_k(d,z,c)\subseteq[0,1]^k$ such that:
\begin{enumerate}[label=\textup{(\roman*)}]
\item $K$ is perfect;
\item $x\in U_k(d)$ and $z-x\in U_k(d)$ for every $x\in K$;
\item $c\leqT x$ and $c\leqT(z-x)$ for every $x\in K$;
\item $\dimH K=d$.
\end{enumerate}
In particular, $\dimH U_k(d)=d$ for every $d\in[0,k]$.
\end{proposition}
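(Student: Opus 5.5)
Our plan is to adapt the Cantor scheme of Proposition~\ref{prop:general-cantor}, now with just two maps, the identity and the reflection $r(x)=z-x$. Both are isometries in the maximum norm, so no trimming or chart constants are needed and every stage is active for both maps. The statement only requires dimension $d$, so we split into two cases.

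\emph{Case $d>0$.} Put $\lambda=k/d>1$ and define $N_j$, $w_j$, $h_j$ as before. Let the stages alternate: even stages $j$ target $x$, odd stages target $z-x$. At a stage for $x$, children of a parent $P$ are the cubes of side $\delta_j$ placed in windows $g+[w_j,2w_j]^k$, with $g$ in the coded lattice $G_j$, lying inside $P$. At a stage for $z-x$, we pull back windows lying inside $r(P)=z-P$, and $r^{-1}$ is again a reflection, so children are cubes inside $P$ whose reflections lie in windows.

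Each point $x\in K$ then satisfies the window condition for $x$ infinitely often and for $z-x$ infinitely often. This gives $x,\,z-x\in E_k(t)$ for every $t\ge d$ (the windows are scaled for $d$), and by Lemma~\ref{lem:approximation}(b) both lie in $U_k(d)$. The same counting as before, which only needs bi-Lipschitz constant $1$, gives $\dimH K=d$.

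\emph{Case $d=0$.} Use super-fast scales, for instance windows of size $2^{-jN_j}$, with finitely many (at least two) children per parent. This makes $x$ and $z-x$ lie in every $E_k(t)$, and $\dimH K=0$ follows from Lemma~\ref{lem:approximation}(d).

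\emph{Computing $c$ from $x$ and from $z-x$.} The main obstacle is improving $c\leqT\Cut(y)'$ to $c\leqT y$, that is, making the cut itself compute $c$ with no jump. The jump in the earlier decoding came from reading $\operatorname{cell}_{N_j}$ at a window boundary. Here the window $[w_j,2w_j]$ sits strictly inside a cell, with margin at least $w_j>0$ from both cell endpoints. Once the point is known to lie in a window, Lemma~\ref{lem:cuts}(d) and (b) compute its cell index from the cut, and the cut decides the comparisons exactly since the point is away from dyadic endpoints. Along the computable sequence of stages devoted to $x$, the residue of $\operatorname{cell}_{N_j}(x_1)$ mod $2^j$ equals $c_{<j}$ at every such stage, not merely eventually, because there is no trimming. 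Hence $c\leqT\Cut(x)$, and likewise $c\leqT\Cut(z-x)$.

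If $d=0$ leaves too little room for the residue coding, we code one bit per stage by the choice between two windows. This still permits $q_j\ge2$ children.

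\emph{The final claim.} For the ``in particular'', the case $d<k$ gives $d\le\dimH U_k(d)$ from $K\subseteq U_k(d)$, and Lemma~\ref{lem:approximation}(d) gives the upper bound. For $d=k$ we have $U_k(k)=\R^k$ by Lemma~\ref{lem:approximation}(a).
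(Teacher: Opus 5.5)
Your proposal is correct and is essentially the paper's argument: stages of alternating parity serve $x$ and $z-x$, windows from the coded lattice $G_j$ are pulled back by the identity or by the reflection with no trimming, and $c$ is read off from the residues of $\operatorname{cell}_{N_j}$ at every stage of the right parity. The paper avoids your case split by using the slowly decreasing exponents $d_j=d+(k-d)/j$, which for $d=0$ give your windows of size about $2^{-jN_j}$, and it takes the windows themselves as children. One correction to your diagnosis: the jump in Proposition~\ref{prop:general-cantor} does not come from window boundaries, since $\operatorname{cell}_N(x)$ is always computable from $\Cut(x)$ by Lemma~\ref{lem:cuts}(d). It comes only from the bits there being correct just eventually, because activation is non-effective. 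So your second explanation, exact residues at every stage because there is no trimming, is the right one, and the fallback one-bit-per-stage coding is unnecessary.
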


\begin{proof}
Fix $k,d,z,c$.  For $j\geq1$, put
\[
 \begin{split}
 N_j&=2^{j^2},\qquad h_j=2^{j-N_j},\qquad
 d_j=d+\frac{k-d}{j},\\
 a_j&=\left\lceil\frac{kN_j}{d_j}\right\rceil+3,
 \qquad w_j=2^{-a_j},\qquad
 c_{<j}=\sum_{i<j}c(i)2^i.
 \end{split}
\]
We have $d<d_j\leq k$, $d_j\geq k/j$, and $d_j\to d$.
Thus, including when $d=0$,
\begin{equation}\label{eq:reflection-exponents}
       N_j+3\leq a_j\leq jN_j+4.
\end{equation}
For $j\geq2$, the scales satisfy
\begin{equation}\label{eq:reflection-scales}
 w_{j-1}\geq8h_j,
 \qquad w_j\leq2^{-N_j-3}\leq h_j/4,
 \qquad w_j<w_{j-1},
 \qquad h_j<h_{j-1}.
\end{equation}
To check the first and third inequalities, use
$N_j=2^{2j-1}N_{j-1}$ and
\[
       N_j-(j-1)N_{j-1}\geq j+7 \quad(j\geq2).
\]
For $j=2$ the left side is $14$; for $j\geq3$ it is at least
$2^{2j-1}\geq j+7$.  Equation~\eqref{eq:reflection-exponents}
then gives
\[
 \log_2(w_{j-1}/h_j)
      =N_j-j-a_{j-1}
      \geq N_j-(j-1)N_{j-1}-j-4\geq3,
\]
and $a_j\geq N_j+3>(j-1)N_{j-1}+4\geq a_{j-1}$.
The other inequalities in \eqref{eq:reflection-scales} follow
directly from the definitions.

For $j\geq2$, let
\[
 G_j=\left\{2^{-N_j}v:
 \begin{array}{l}
  v\in\mathbb Z^k,\quad v_1\equiv c_{<j}\pmod{2^j},\\
  v_l\equiv0\pmod{2^j}\quad(2\leq l\leq k)
 \end{array}\right\}.
\]
This is a translate of $h_j\mathbb Z^k$.
For $g\in G_j$, let $W_j(g)=g+[w_j,2w_j]^k$.
Define the isometry
\[
       \varphi_j(x)=
       \begin{cases}
          x,&j\text{ even},\\
          z-x,&j\text{ odd}.
       \end{cases}
\]
Each $\varphi_j$ maps axis-parallel closed cubes onto
axis-parallel closed cubes of the same side length.
Finally, put
\[
       \kappa_j=\left\lfloor\frac{w_{j-1}}{2h_j}\right\rfloor,
       \qquad m_j=\kappa_j^k.
\]
By \eqref{eq:reflection-scales},
\begin{equation}\label{eq:reflection-branching}
       \kappa_j\geq4,
       \qquad
       \frac{w_{j-1}}{4h_j}\leq\kappa_j
                       \leq\frac{w_{j-1}}{2h_j}.
\end{equation}

We construct finite families $\mathcal Q_j$ of closed cubes,
starting with $\mathcal Q_1=\{[0,1]^k\}$.
Members of $\mathcal Q_j$ have side length $\ell_j$, where
$\ell_1=1$ and $\ell_j=w_j$ for $j\geq2$.
In particular, $\ell_{j-1}\geq w_{j-1}$.
Let $P\in\mathcal Q_{j-1}$, and write
\[
       \varphi_j(P)=\prod_{l=1}^k[\alpha_l,\alpha_l+\ell_{j-1}].
\]
A window $W_j(g)$ is contained in this cube exactly when
\[
       g_l\in[\alpha_l-w_j,\alpha_l+\ell_{j-1}-2w_j]
                  \quad(1\leq l\leq k).
\]
For each coordinate $l$, the admissible values form the
intersection of an interval of length $\ell_{j-1}-w_j$ with a
translate of $h_j\mathbb Z$.  Their number is at least
\[
 \frac{\ell_{j-1}-w_j}{h_j}-1
   \geq\frac{\ell_{j-1}}{h_j}-2
   \geq\frac{\ell_{j-1}}{2h_j}
   \geq\kappa_j.
\]
Here we used $w_j\leq h_j$ and
$\ell_{j-1}\geq w_{j-1}\geq8h_j$.
Choose the $\kappa_j$ smallest admissible values in each
coordinate and keep the cubes
\[
       \varphi_j^{-1}(W_j(g))
\]
for the resulting Cartesian product of coordinate choices.
These are exactly $m_j$ children of $P$, each of side $w_j$ and
contained in $P$.  Let $\mathcal Q_j$ be the collection of all
children and put
\[
       K=\bigcap_{j\geq1}\bigcup\mathcal Q_j.
\]

Two distinct windows at level $j$ differ in some base-point
coordinate by at least $h_j$, and therefore have
$\|\cdot\|_\infty$-distance at least
$h_j-w_j\geq h_j/2$.  Since $\varphi_j$ is an isometry, the same
holds for two children of a common parent.  Children of distinct
parents inherit distance at least $h_{j-1}/2\geq h_j/2$.
It follows inductively that distinct cubes in $\mathcal Q_j$
have $\|\cdot\|_\infty$-distance at least $h_j/2$.

In particular, each point of $K$ lies in a unique cube at every
level.  The set $K$ is nonempty and compact, every construction
cube has at least two children, and every child has descendants
at all later levels.  Since the diameters at level $j$ tend to
zero, no point of $K$ is isolated.  Thus $K$ is perfect, also
when $d=0$.

Let $x\in K$, let $\epsilon\in\{0,1\}$, and put $y=x$ if
$\epsilon=0$, and $y=z-x$ if $\epsilon=1$.  At every
$j\geq2$ with $j\equiv\epsilon\pmod2$, the construction gives
$g\in G_j$ such that
\begin{equation}\label{eq:reflection-window-membership}
       y-g\in[w_j,2w_j]^k.
\end{equation}
Fix a rational $t>d$.  Since $d_j\to d$, for all sufficiently
large such $j$ we have $d_j<t$, and
\[
 \|y-g\|_\infty
      \leq2w_j
      =2^{-\lceil kN_j/d_j\rceil-2}
      \leq2^{-\lceil kN_j/t\rceil}.
\]
There are infinitely many stages of the required parity, so
$y\in E_k(t)$.  Since this holds for every rational $t>d$, we
obtain $y\in U_k(d)$, proving assertion (ii).

To decode $c(i)$ from $\Cut(y)$, choose the least integer
$j\geq\max\{2,i+1\}$ with $j\equiv\epsilon\pmod2$.
Read the first coordinate cut and compute
$v=\operatorname{cell}_{N_j}(y_1)$.
By \eqref{eq:reflection-window-membership} and
$0<2w_j<2^{-N_j}$, this integer is the first coordinate index
of the corresponding $g=2^{-N_j}(v_1,\ldots,v_k)$.
Consequently
\[
        v\bmod2^j=c_{<j}.
\]
Its $i$th binary digit is $c(i)$.  This proves both reductions
in assertion (iii), without taking a Turing jump.

It remains to compute the dimension.
If $d=0$, assertion (ii) and Lemma~\ref{lem:approximation} already
give $\dimH K=0$.  Suppose that $d>0$ and fix $0<t<d$.
Put $M_1=1$ and $M_j=m_2\cdots m_j$ for $j\geq2$.
Giving every child an equal share of its parent's mass defines
a Borel probability measure $\mu$ supported on $K$, with
\[
       \mu(P)=1/M_j\qquad(P\in\mathcal Q_j).
\]
The separation of the cubes makes this identity unambiguous.
Also, $M_j\to\infty$, so $\mu$ has no atoms.

Equations \eqref{eq:reflection-exponents} and
\eqref{eq:reflection-branching} give, for $j\geq2$,
\begin{align*}
   \log_2\kappa_j
   &\geq N_j-j-a_{j-1}-2\\
   &\geq N_j-(j-1)N_{j-1}-j-6\\
   &\geq N_j-3jN_{j-1}.
\end{align*}
Since $M_j\geq m_j=\kappa_j^k$ and
$a_j\leq kN_j/d_j+4$, it follows that
\begin{equation}\label{eq:reflection-mass-growth}
 \log_2(M_jw_j^t)
    \geq kN_j\left(1-\frac{t}{d_j}-3j2^{1-2j}\right)-4t
       \longrightarrow\infty.
\end{equation}
Indeed, $t/d_j\to t/d<1$.  Thus
\[
       C_t=\sup_{j\geq1}(M_jw_j^t)^{-1}<\infty.
\]

Let $Q$ be any closed cube of side length $\ell>0$.
If $\ell\geq w_1$, then
\[
      \mu(Q)\leq1\leq(\ell/w_1)^t\leq C_t\ell^t,
\]
because $C_t\geq w_1^{-t}$.  Otherwise choose $j\geq2$ with
$w_j\leq\ell<w_{j-1}$.

First, $Q$ meets at most $2^k$ cubes of $\mathcal Q_{j-1}$.
For $j=2$ there is only one such cube.  For $j\geq3$, those cubes
have common side length $L=w_{j-1}>\ell$, and their centers have
pairwise $\|\cdot\|_\infty$-distance at least $L$.
The centers of cubes meeting $Q$ lie in a cube of side
$L+\ell<2L$.  Divide that cube into $2^k$ pieces, each of
$\|\cdot\|_\infty$-diameter less than $L$; every piece contains
at most one center.

Second, each parent $P\in\mathcal Q_{j-1}$ has at most
$(2+\ell/h_j)^k$ children meeting $Q$.
A child is a cube $u+[0,w_j]^k$, where
$u=g+w_j\mathbf1$ or $u=z-g-2w_j\mathbf1$ for $g\in G_j$.
Thus each coordinate of $u$ ranges over a translate of
$h_j\mathbb Z$.  If
$Q=\prod_l[\beta_l,\beta_l+\ell]$, such a child can meet $Q$
only if
\[
        u_l\in[\beta_l-w_j,\beta_l+\ell]
                       \quad(1\leq l\leq k).
\]
This interval has length $\ell+w_j\leq\ell+h_j$, so it contains
at most $2+\ell/h_j$ possible coordinate values.
Combining the two counts gives
\[
       \mu(Q)\leq\frac{2^k(2+\ell/h_j)^k}{M_j}.
\]
If $\ell<h_j$, then
\[
       \mu(Q)\leq6^k/M_j
                    \leq6^kC_tw_j^t
                    \leq6^kC_t\ell^t.
\]
If $\ell\geq h_j$, then $2+\ell/h_j\leq3\ell/h_j$.
Using $M_j=\kappa_j^kM_{j-1}$,
$\kappa_jh_j\geq w_{j-1}/4$, and $\ell<w_{j-1}$, we obtain
\begin{align*}
       \mu(Q)
       &\leq\frac{6^k\ell^k}{h_j^k\kappa_j^kM_{j-1}}\\
       &\leq\frac{24^k}{M_{j-1}}
                        \left(\frac\ell{w_{j-1}}\right)^k\\
       &\leq\frac{24^k}{M_{j-1}}
                        \left(\frac\ell{w_{j-1}}\right)^t
        \leq24^kC_t\ell^t.
\end{align*}
Thus $\mu(Q)\leq24^kC_t\ell^t$ for every closed cube.
Every subset of $\R^k$ of Euclidean diameter $\ell$ lies in
an axis-parallel cube of side $\ell$, and singleton sets have
zero $\mu$-measure.  The mass distribution principle therefore
gives $\mathcal H^t(K)\geq(24^kC_t)^{-1}>0$.
It follows that $\dimH K\geq d$.
The reverse inequality follows from $K\subseteq U_k(d)$ and
Lemma~\ref{lem:approximation}.  This proves assertion (iv).

Finally, the existence of $K\subseteq U_k(d)$ of dimension $d$
gives $\dimH U_k(d)=d$ for $d<k$.  For $d=k$, the equality follows
from $U_k(k)=\R^k$.
\end{proof}

\begin{remark}[Uniform reflection decoding]
\label{rem:reflection-decoding}
There are two Turing functionals $\Phi_0,\Phi_1$, independent of
$k,d,z,c$, such that, with the fixed vector coding,
\[
        \Phi_0(\Cut(x))=c,
        \qquad \Phi_1(\Cut(z-x))=c
                      \quad(x\in K_k(d,z,c)).
\]
The decoders use only the first coordinate, the fixed scales
$N_j=2^{j^2}$, and the parity of $j$.
The slowly decreasing exponents $d_j$ make $d=0$ part of the
same construction.  In particular, every $z\in\R^k$ can be
written as $z=x+(z-x)$ with both summands in $U_k(0)$, so
\[
          U_k(0)+U_k(0)=\R^k,
          \qquad\dimH U_k(0)=0.
\]
This observation will supply spanning pairs in the basis
construction.
\end{remark}

\section{Coded null sets}\label{sec:null-sets}

For \(k\ge1\), fix uniformly computable enumerations
\((\vartheta_i^k)_{i\in\omega}\) of \(\Q\cap(0,k)\) and
\((C_i^k)_{i\in\omega}\), where \(C_0^k=\varnothing\) and the other terms
list all open cubes with rational vertices and rational side length.
For a cube of side \(\ell\), put \(\sigma(C)=k\ell\), and let
\(\sigma(\varnothing)=0\). Thus \(\diam C\le\sigma(C)\).
For \(q\in\omega^\omega\), define
\[
 r_q^k=\vartheta_{q(0)}^k,\qquad
 t_q^k=\vartheta_{q(1)}^k,\qquad
 C_{q,n,i}^k=C_{q(2+\langle n,i\rangle)}^k
 \quad(n\ge1,\ i\in\omega).
\]

\begin{definition}\label{def:null-codes}
A code \(q\) is \(k\)-good if \(t_q^k<r_q^k\) and, for every \(n\ge1\)
and \(m\in\omega\),
\[
 \sigma(C_{q,n,m}^k)\le2^{-n},
 \qquad
 \sum_{i\le m}\sigma(C_{q,n,i}^k)^{t_q^k}\le2^{-n}.
\]
For a \(k\)-good code put
\[
 N_k(q)=\bigcap_{n\ge1}\bigcup_{i\in\omega}C_{q,n,i}^k,
 \qquad \theta_q^k=t_q^k.
\]
Otherwise put \(N_k(q)=\varnothing\) and \(\theta_q^k=0\).
\end{definition}

\begin{proposition}\label{prop:null-codes}
\begin{enumerate}[label=\textup{(\alph*)}]
\item Membership \(x\in N_k(q)\) is arithmetic, uniformly in \(k,q,x\),
      and every \(N_k(q)\) is \(G_\delta\).
\item For all \(k,q\), \(\theta_q^k<r_q^k\) and
      \(\Haus^t(N_k(q))=0\) whenever \(t>\theta_q^k\).
      In particular, \(\dimH N_k(q)<r_q^k\).
\item If \(r\in\Q\cap(0,k)\) and \(A\subseteq\R^k\) satisfies
      \(\dimH A<r\), then \(A\subseteq N_k(q)\) for a \(k\)-good code
      \(q\) with \(r_q^k=r\).
\end{enumerate}
\end{proposition}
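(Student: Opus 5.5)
Part (a) is a quantifier count. Goodness of \(q\) says that \(\vartheta^k_{q(1)}<\vartheta^k_{q(0)}\) and, for all \(n\ge1\) and \(m\), two inequalities between computable rationals hold. Each \(\sigma(C^k_{q,n,i})\) is rational, but it is raised to a rational power \(t\). I will compare \(\sum_{i\le m}\sigma^{t}\le2^{-n}\) by computing each term to arbitrary precision. A non-strict inequality between computable reals is \(\Pi^0_1\), so goodness is \(\Pi^0_2\) in \(q\), uniformly in \(k\). Membership \(x\in C^k_{q,n,i}\) for an open rational cube is \(\Sigma^0_1\) in \(\Cut(x)\) (strict comparisons with rational vertices), and \(C_0^k=\varnothing\) is decidable. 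Hence
\[
x\in N_k(q)\iff [q\text{ good}\wedge\forall n\,\exists i\,(x\in C^k_{q,n,i})]\vee\text{false}
\]
is arithmetic, uniformly in \(k,q,x\). For a good code, \(N_k(q)\) is visibly a countable intersection of open sets; otherwise it is empty. Either way it is \(G_\delta\).

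For (b), if \(q\) is not good, then \(\theta=0<r_q^k\) because \(\vartheta\) ranges over \((0,k)\), and \(N_k(q)=\varnothing\) is \(\Haus^t\)-null. If \(q\) is good, then \(\theta=t_q^k<r_q^k\). For each \(n\) the cubes \(C^k_{q,n,i}\) cover \(N_k(q)\), have diameter \(\le\sigma\le2^{-n}\), and satisfy \(\sum_i(\diam)^{\theta}\le\sum_i\sigma^{\theta}\le2^{-n}\) by letting \(m\to\infty\). So \(\Haus^{\theta}_{2^{-n}}(N_k(q))\le2^{-n}\), and letting \(n\to\infty\) gives \(\Haus^{\theta}(N_k(q))=0\). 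For \(t>\theta\) we get \(\Haus^t=0\) by Lemma~\ref{lem:hausdorff}(a) monotonicity: if \(\dimH\le\theta<t\), then \(\Haus^t=0\). Hence \(\dimH N_k(q)\le\theta<r_q^k\).

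For (c), given \(\dimH A<r\), I choose a rational \(t\) with \(\dimH A<t<r\), and a rational \(t'\) with \(\dimH A<t'<t\), taking \(t'>0\) in case \(\dimH A=0\). Then \(\Haus^{t'}(A)=0\) by Lemma~\ref{lem:hausdorff}(a). I will use exponent \(t\) in the code, keeping \(t'\) only as a safety margin; in fact \(\Haus^t(A)=0\) is all that is needed. The main step is to convert arbitrary covers into rational open cubes measured by \(\sigma\). Given \(n\), pick a cover \((V_i)\) of \(A\) with \(\diam V_i\le\delta\) and \(\sum(\diam V_i)^t<\epsilon\), where \(\delta,\epsilon\) are small depending on \(k,t,n\). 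Each \(V_i\) with positive diameter lies in a closed cube of side \(\diam V_i\), and hence in an open rational cube of side \(\ell_i\le2\diam V_i\). The set of \(V_i\) of diameter zero is countable; I cover the \(i\)th such point by a rational cube with \((k\ell)^t\le\epsilon2^{-i}\). Then \(\sum\sigma^t\le (2k)^t\epsilon+2\epsilon\le2^{-n}\) for suitable \(\epsilon\), and \(\sigma\le 2k\delta\le2^{-n}\). Padding with \(C_0^k=\varnothing\) handles finite covers. I then choose indices \(q(0),q(1)\) with \(\vartheta_{q(0)}=r\), \(\vartheta_{q(1)}=t\), and fill the columns \(q(2+\langle n,i\rangle)\) with these cubes. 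All the partial-sum constraints hold, so \(q\) is good, and \(A\subseteq\bigcup_iC_{q,n,i}\) for every \(n\), so \(A\subseteq N_k(q)\). The only mild obstacle is the zero-diameter and finite-cover bookkeeping, handled as above.
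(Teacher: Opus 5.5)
Your proposal is correct and follows the paper's proof essentially step by step. For (a), both you and the paper show that goodness and membership in rational open cubes are arithmetic. For (b), both use the $n$-th covers to get $\Haus^{\theta}(N_k(q))=0$. For (c), both convert small $\Haus^t$-covers into rational open cubes of $\sigma$-cost at most $2^{-n}$. The only difference is cosmetic: the paper enlarges every piece by a summable padding $\eta_i$, which handles empty and zero-diameter pieces uniformly, whereas you treat zero-diameter pieces separately with a geometric budget. Both work, and your auxiliary exponent $t'$ is not needed.
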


\begin{proof}
The values \(\sigma(C)^t\), for rational \(t>0\), are uniformly computable
real algebraic numbers. Their comparisons with rational numbers are
arithmetic, so goodness is arithmetic.
Membership in a rational open cube is arithmetic in the cut of the point,
which proves (a).
For a good code the \(n\)-th cover has diameters at most \(2^{-n}\) and
total \(t_q^k\)-cost at most \(2^{-n}\).
Hence \(\Haus^{t_q^k}(N_k(q))=0\), and (b) follows from
Lemma~\ref{lem:hausdorff}. The nongood case is immediate.

For (c), choose a rational \(t\) with \(\dimH A<t<r\).
Fix \(n\ge1\), and let \(\delta=2^{-n}/(4k)\).
Choose a cover \(A\subseteq\bigcup_i V_i\) with
\[
 \diam V_i\le\delta,\qquad
 \sum_i(\diam V_i)^t<(4k)^{-t}2^{-n-1}.
\]
Put
\[
 \eta_i=\min\{\delta,((4k)^{-t}2^{-n-2-i})^{1/t}\};
 \qquad
 \sum_i\eta_i^t\le(4k)^{-t}2^{-n-1}.
\]
If \(V_i=\varnothing\), set \(D_i=\varnothing\).
Otherwise let \(\delta_i=\diam V_i\), choose \(v_i\in V_i\), a rational
point \(c_i\) with \(\|c_i-v_i\|_\infty<\eta_i/2\), and a rational
\(\ell_i\) satisfying
\[
                  2\delta_i+\eta_i<\ell_i\le2\delta_i+2\eta_i.
\]
Let \(D_i\) be the open cube of side \(\ell_i\) centered at \(c_i\).
Then \(V_i\subseteq D_i\), and
\[
                 \sigma(D_i)\le2k(\delta_i+\eta_i)\le2^{-n}.
\]
Using \((a+b)^t\le2^t(a^t+b^t)\), we obtain
\[
 \sum_i\sigma(D_i)^t
 \le(4k)^t\sum_i(\delta_i^t+\eta_i^t)<2^{-n}.
\]
Carry out this construction for every \(n\), and code the resulting cubes,
with the first two entries coding \(r\) and \(t\).
The resulting code is \(k\)-good and its null set contains \(A\).
\end{proof}

\begin{corollary}[Dimension test]\label{cor:dimension-test}
For \(B\subseteq\R^k\) and \(s\in[0,k]\),
\[
 \dimH B\ge s
 \quad\Longleftrightarrow\quad
 (\forall q\in\omega^\omega)\,
      (r_q^k<s\ \Longrightarrow\ B\nsubseteq N_k(q)).
\]
\end{corollary}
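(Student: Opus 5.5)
We prove the two implications separately, using Proposition~\ref{prop:null-codes}.

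\emph{Forward direction.} Suppose \(\dimH B\ge s\), and let \(q\) satisfy \(r_q^k<s\). Part (b) of Proposition~\ref{prop:null-codes} gives \(\dimH N_k(q)<r_q^k<s\). If we had \(B\subseteq N_k(q)\), monotonicity of Hausdorff dimension would give \(\dimH B<s\), a contradiction. Hence \(B\nsubseteq N_k(q)\). This direction needs no case distinctions, since (b) holds for every code, good or not; a non-good code has \(N_k(q)=\varnothing\), which is consistent with this.

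\emph{Reverse direction, by contraposition.} Suppose \(\dimH B<s\). We must find a code \(q\) with \(r_q^k<s\) and \(B\subseteq N_k(q)\). Since \(\dimH B<s\le k\), we can choose a rational \(r\) with
\[
 \dimH B<r<s .
\]
Then \(r\in\Q\cap(0,k)\): positivity holds because \(r>\dimH B\ge0\), and \(r<k\) because \(r<s\le k\). Part (c) of Proposition~\ref{prop:null-codes} now gives a \(k\)-good code \(q\) with \(r_q^k=r<s\) and \(B\subseteq N_k(q)\). This code violates the right-hand side.

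\emph{Edge cases.} When \(s=0\), the left side holds trivially. The right side is also vacuous, because no code has \(r_q^k<0\): every \(\vartheta_i^k\) is positive. So the equivalence holds when \(s=0\). In the reverse direction the case \(s=0\) cannot occur, since \(\dimH B<0\) is impossible. The main point to check is exactly this: the rational \(r\) must lie in the enumerated range \(\Q\cap(0,k)\) used to define \(r_q^k\), and it does whenever \(\dimH B<s\le k\).
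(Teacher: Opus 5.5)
Your proof is correct and follows the paper's argument. The forward direction is the contrapositive via Proposition~\ref{prop:null-codes}(b); the converse chooses a rational \(r\) with \(\dimH B<r<s\) and applies part (c), with \(s=0\) handled as a vacuous case, and your check that \(r\in\Q\cap(0,k)\) makes explicit what the paper leaves implicit.
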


\begin{proof}
If \(B\subseteq N_k(q)\) and \(r_q^k<s\), then
\(\dimH B<r_q^k<s\).
Conversely, if \(\dimH B<s\), choose a rational
\(\dimH B<r<s\) and apply Proposition~\ref{prop:null-codes}(c).
The assertion for \(s=0\) is vacuous on the right and automatic on the left.
\end{proof}

\section{Proof of Theorem~\ref{thm:A}}\label{sec:general-closures}

Fix a coded function family as in Definition~\ref{def:function-family}.
We call the elements $\iota=(n,\gamma)\in I$ \emph{labels}, and write
$\iota^*=\widehat n\oplus\gamma$ for the real that codes $\iota$.
A single recursion of length $\omega_1$ handles all labels at once:
every record carries a label, all records belong to one history, and
a generator with label $\iota$ contributes only to the closures for
$\iota$. For a label $(n,\gamma)$, the rational tags
lie in $(0,n)$, and we use the approximation sets $E_n(r)$ of
Section~\ref{sec:cantor-sets} and the null-set codes $N_n(q)$ of
Section~\ref{sec:null-sets}.

For a finite tuple $\bar a$ of points of $\R^n$ we abbreviate
$\cl_\iota(\operatorname{ran}\bar a)$ to $\cl_\iota(\bar a)$, and
write $\cl_\iota(\bar a,x)$ for $\cl_\iota(\operatorname{ran}\bar a\cup\{x\})$.
Using projections, permutation and identification of variables, and
dummy variables, every element of $\cl_\iota(\bar a)$ can be written
as $f_{\iota,e}(\bar a)$ for a map $f_{\iota,e}$ whose arity is exactly
the length of $\bar a$.

\begin{lemma}\label{cl:effective-closure}
Let $\iota=(n,\gamma)\in I$.
\begin{enumerate}
 \item $\cl_\iota$ is extensive, monotone and idempotent, has finite
 character, and maps countable sets to countable sets. Moreover,
 $\Q^n\subseteq\cl_\iota(\varnothing)$.
 \item The relation $y\in\cl_\iota(\bar a)$, as a relation of $\iota$,
 the finite tuple $\bar a$ and $y$, is lightface $\Delta^1_1$. More
 generally, if a countable set $S\subseteq\R^n$ is enumerated by a
 finite or infinite sequence with code $\sigma$, then
 $y\in\cl_\iota(S)$ is $\Delta^1_1$ as a relation of $\iota^*$,
 $\sigma$ and $\Cut(y)$.
 \item Let $h(t)=f_{\iota,e}(\bar a,t)$. If $h$ is defined and
 injective on an open neighborhood of $x\in\R^n$ and $h(x)=y$, then
 \[
 \Cut(x)\leq_h\iota^*\oplus\Cut(\bar a)\oplus\Cut(y).
 \]
\end{enumerate}
\end{lemma}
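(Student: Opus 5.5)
Part (1) follows directly from (F1). Extensivity comes from the projections. Monotonicity is immediate from the definition. For idempotence, suppose $y=f(b_1,\dots,b_m)$ with each $b_j=g_j(\bar a_j)$. Then $y$ is a value of the composition $f(g_1,\dots,g_m)$ at the concatenated tuple, after adding dummy variables. This composite lies in $\mathcal F_\iota$, and it is defined exactly where all the intermediate values are defined. Finite character holds because each element of $\cl_\iota(A)$ is witnessed by a finite tuple. Countability holds because $\mathcal F_\iota$ is countable and $A^{<\omega}$ is countable when $A$ is. The inclusion $\Q^n\subseteq\cl_\iota(\varnothing)$ comes from the nullary constants.

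For (2), we use the normal form noted before the lemma. The statement $y\in\cl_\iota(\bar a)$ holds iff there is $e\in\omega$ whose arity, computable from $e$, equals the length of $\bar a$, and $\operatorname{Eval}(n,\gamma,e,\bar a,y)$ holds. This is a number quantifier over a $\Delta^1_1$ matrix, so by Lemma~\ref{fact:effective}(d) it is $\Pi^1_1$. For the complement we use the same form with a universal quantifier: $y\notin\cl_\iota(\bar a)$ iff for all $e$ of the right arity, $\neg\operatorname{Eval}$. Since $\Delta^1_1$ is closed under complements and number quantifiers, the relation is $\Delta^1_1$.

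Uniformity in $\iota^*$ needs a little care. $I$ itself is $\Delta^1_1$, and $n$ is recovered from $\widehat n$. The tuple is a finite join together with its length, so the arity check is computable.

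For a countable $S$ coded by $\sigma$, $y\in\cl_\iota(S)$ holds iff there are $e$ and a finite sequence of indices $i_1,\dots,i_m<|\sigma|$ such that $\operatorname{Eval}(n,\gamma,e,(\sigma(i_1),\dots,\sigma(i_m)),y)$. Here the entries $\sigma(i)$ are extracted uniformly recursively from $\sigma$, and they are reals coding points. Extracting them is a recursive substitution, which preserves $\Delta^1_1$. So again we have a number existential over $\Delta^1_1$. The complement is a number universal over the negated matrix, so the relation is $\Delta^1_1$. One technical point is whether the entries of $\sigma$ are valid cuts. We can restrict to entries satisfying the arithmetic condition of Lemma~\ref{lem:cuts}(a), or just note that $\operatorname{Eval}$ is evaluated on codes.

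For (3), fix a rational open cube $C\ni x$ on which $h$ is defined and injective. This gives a finite parameter, which is harmless since the reduction is nonuniform. Then $x$ is the unique $t\in\R^n$ satisfying
\[
t\in C\quad\text{and}\quad \operatorname{Eval}(n,\gamma,e,(\bar a,t),y).
\]
Over the space of codes, consider the set of reals $v$ such that $v$ is a vector cut (arithmetic by Lemma~\ref{lem:cuts}(a)), $\Dec$-coordinates lie in $C$ (arithmetic), and $\operatorname{Eval}$ holds with $t$ coded by $v$. This set is $\Delta^1_1(\iota^*\oplus\Cut(\bar a)\oplus\Cut(y))$, with $e$ and $C$ as finite parameters. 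It is the singleton $\{\Cut(x)\}$, because cuts are unique codes. By Lemma~\ref{fact:effective}(c), $\Cut(x)$ is hyperarithmetic in that oracle.

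The main obstacle is bookkeeping in (3). We must make sure that $e$ here indexes the map with $\bar a$ placed before $t$. This is legitimate by permutation of variables (F1), although the index may not be computable. That is fine, since only the existence of some index is needed for a nonuniform reduction.
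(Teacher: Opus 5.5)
Your proof is correct and follows the paper's argument essentially step by step. Part (1) comes from the closure properties in (F1). Part (2) is a number quantifier applied to the $\Delta^1_1$ evaluation relation: over indices $e$ of the right arity, and for a coded sequence $\sigma$ also over finitely many positions in $\sigma$. Part (3) exhibits $\{\Cut(x)\}$ as a $\Delta^1_1(\iota^*\oplus\Cut(\bar a)\oplus\Cut(y))$ singleton, using a rational cube inside the injectivity neighborhood, and then applies Lemma~\ref{fact:effective}(c). The bookkeeping worry at the end is moot: the hypothesis already writes $h$ as $f_{\iota,e}(\bar a,t)$ with $\bar a$ first, so no permutation of variables is needed.
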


\begin{proof}
The projections give extensiveness. Monotonicity and finite character
are clear from the definition of $\cl_\iota$, and idempotence follows
from closure under composition. The constants give
$\Q^n\subseteq\cl_\iota(\varnothing)$. Since $\mathcal F_\iota$ is
countable and a countable set has only countably many finite tuples,
$\cl_\iota$ maps countable sets to countable sets.

For~(2), $y\in\cl_\iota(\bar a)$ holds if and only if
$\operatorname{Eval}(n,\gamma,e,\bar a,y)$ holds for some $e$ such
that $f_{\iota,e}$ has the arity of $\bar a$. This is a number
quantifier applied to a $\Delta^1_1$ relation. For a sequence coded
by $\sigma$ we quantify, in addition, over finite sets of positions
in the sequence.

For~(3), choose a rational open cube $Q$ containing $x$ inside a
neighborhood on which $h$ is defined and injective. Then $\{\Cut(x)\}$
is the set of cuts of points $t\in Q$ with $f_{\iota,e}(\bar a,t)=y$,
which is a $\Delta^1_1(\iota^*,\Cut(\bar a),\Cut(y))$ singleton, since
$e$ and $Q$ are finite data. Its element is hyperarithmetic in these
parameters by Lemma~\ref{fact:effective}(c).
\end{proof}

We shall use the following simple observation. If a section
$h(t)=f_{\iota,e}(\bar a,t)$ is constant with value $v$ on a nonempty
open subset of $\R^n$, then
\begin{equation}\label{cl:constant-old}
 v\in\cl_\iota(\bar a).
\end{equation}
Indeed, choose a rational vector $q\in\Q^n$ in that open set.
Since $q\in\cl_\iota(\varnothing)$, the value
$v=f_{\iota,e}(\bar a,q)$ belongs to $\cl_\iota(\bar a)$ by
idempotence. This is where we use that the constants in (F1) include
every rational vector, so that they are dense in $\R^n$.

\begin{lemma}[Extension with protected points]\label{cl:extension}
Fix $\iota=(n,\gamma)\in I$.  Let $S,T\subseteq\R^n$ be countable
and suppose that
\[
 T\cap\cl_\iota(S)=\varnothing.
\]
Let $0<r<n$, let $N\subseteq\R^n$ have Hausdorff dimension less than
$r$, and let $d,u\in\omega^\omega$.  There is a point $z\in\R^n$
such that:
\begin{enumerate}
 \item $z\notin\cl_\iota(S)\cup N$ and
 $T\cap\cl_\iota(S\cup\{z\})=\varnothing$;
 \item for every finite tuple $\bar a$ from $S$ and every
 $y\in\cl_\iota(\bar a,z)\setminus\cl_\iota(\bar a)$,
 \[
 y\in E_n(r)
 \quad\hbox{and}\quad
 d\oplus\Cut(z)\leq_h\Cut(y);
 \]
 \item $u\leq_h\Cut(z)$.
\end{enumerate}
\end{lemma}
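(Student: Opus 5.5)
The plan is to take \(z\) from one of the Cantor sets of Proposition~\ref{prop:general-cantor}, built for all relevant unary sections at once, and to discard the bad points by a dimension count. Fix an enumeration of \(S\) with code \(\sigma\). Consider all partial maps
\[
 h_{e,\bar a}(x)=f_{\iota,e}(\bar a,x),
\]
where \(\bar a\) is a finite tuple from \(S\) and \(f_{\iota,e}\) has arity \(|\bar a|+1\). This family is countable because \(S\) is countable and \(\mathcal F_\iota\) is countable. By (F3) each \(h_{e,\bar a}\) satisfies \textup{(QT)}. Put \(f_0=\mathrm{id}_{\R^n}\) at the front; this is itself such a section, namely a projection with no parameters. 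Let \(c\in2^\omega\) code \(d\oplus u\oplus\sigma\oplus\iota^*\), for example as the characteristic function of the graph. Apply Proposition~\ref{prop:general-cantor} with \(k=n\), \(s=r\in(0,n)\) and this \(c\). This gives a compact perfect \(K\) with \(\dimH K=r\). Moreover, at every point of \(K\) every section falls under one of (a)--(c), and for (c) we have \(h(x)\in E_n(r)\) and \(c\leqT\Cut(h(x))'\).

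Next I show that all but a set of dimension \(<r\) of \(K\) is good. The bad set is \(N\cup\cl_\iota(S)\cup B_T\). Here \(B_T\) is the set of \(x\in K\) with \(t\in\cl_\iota(S\cup\{x\})\) for some \(t\in T\). For such \(x\), finite character gives a tuple \(\bar a\) from \(S\) and an index \(e\) with \(t=h_{e,\bar a}(x)\). Since \(t\notin\cl_\iota(\bar a)\subseteq\cl_\iota(S)\), alternative (b) is excluded by \eqref{cl:constant-old}, and (a) is excluded because \(h_{e,\bar a}\) is defined at \(x\). Hence (c) holds at \(x\). Then \(x\) is an isolated point of \(h_{e,\bar a}^{-1}(t)\), because a local bi-Lipschitz homeomorphism is injective near \(x\). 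So for fixed \(e,\bar a,t\) these \(x\) form a discrete, hence countable, set, and \(B_T\) is countable. The set \(\cl_\iota(S)\) is countable by Lemma~\ref{cl:effective-closure}(1). Choose \(t'\) with \(\dimH N<t'<r\). Then \(\Haus^{t'}\) vanishes on the bad set by Lemma~\ref{lem:hausdorff}(b), while \(\Haus^{t'}(K)>0\) by Lemma~\ref{lem:hausdorff}(a). So some \(z\in K\) avoids the bad set, and this proves (1).

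For (2), let \(y\in\cl_\iota(\bar a,z)\setminus\cl_\iota(\bar a)\) and write \(y=h_{e,\bar a}(z)\). As above, (a) and (b) are impossible at \(z\), so (c) holds. Proposition~\ref{prop:general-cantor}(iii) then gives \(y\in E_n(r)\) and \(c\leqT\Cut(y)'\), so \(c\leqh\Cut(y)\). From \(c\) we obtain \(d\), \(\iota^*\) and \(\sigma\), and \(\sigma\) yields \(\Cut(\bar a)\). The positions of \(\bar a\) in \(\sigma\) are finite data, so nonuniformity is harmless for \(\leqh\). Lemma~\ref{cl:effective-closure}(3) then gives
\[
 \Cut(z)\leqh\iota^*\oplus\Cut(\bar a)\oplus\Cut(y)\leqh\Cut(y),
\]
and hence \(d\oplus\Cut(z)\leqh\Cut(y)\). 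For (3), apply (iii) to \(f_0=\mathrm{id}\), which is in case (c) everywhere; this gives \(c\leqT\Cut(z)'\), so \(u\leqh\Cut(z)\).

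I expect the main obstacle to be the protection of \(T\) in (1): one has to see that the points \(z\) that would generate a protected point form only a countable set. This rests on excluding alternative (b) through \eqref{cl:constant-old} and on the discreteness of fibres in case (c). The remaining steps follow directly once the Cantor set is built for the full countable family of sections, with \(c\) also encoding \(\sigma\) and \(\iota^*\).
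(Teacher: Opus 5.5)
Your proposal is correct and follows the paper's proof essentially step for step. It takes the same Cantor set from Proposition~\ref{prop:general-cantor}, built for all unary sections over finite tuples from \(S\) with the identity listed first and with \(c\) coding \(\iota^*\oplus\sigma\oplus d\oplus u\); it uses the same countability argument for the points that would capture a protected point, ruling out (b) via \eqref{cl:constant-old} and using injectivity in case (c); and it recovers \(\Cut(z)\) from \(\Cut(y)\) via Lemma~\ref{cl:effective-closure}(3) in the same way.
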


\begin{proof}
Choose a code $\sigma$ for an enumeration of $S$, and choose
$c\in2^\omega$ coding $\iota^*\oplus\sigma\oplus d\oplus u$.
The sections
\[
 h(t)=f_{\iota,e}(\bar a,t),
 \qquad \bar a\in S^{<\omega},\quad t\in\R^n,
\]
where $f_{\iota,e}$ has arity $|\bar a|+1$, form a countable family of
partial maps satisfying \textup{(QT)}, by (F3). We apply
Proposition~\ref{prop:general-cantor} to this family, with the identity
of $\R^n$ listed first, with $s=r$, and with the real $c$. It gives a
compact perfect set $K\subseteq[0,1]^n$ of Hausdorff dimension $r$
such that, for every section $h$ and every $x\in K$, one of
\textup{(a)--(c)} holds for $h$ at $x$, and such that, whenever \textup{(c)} holds
for $h$ at $x\in K$,
\begin{equation}\label{cl:cantor-image}
 h(x)\in E_n(r)
 \quad\hbox{and}\quad
 c\leq_h\Cut(h(x)).
\end{equation}
In particular, the identity section gives
\begin{equation}\label{cl:cantor-identity}
 c\leq_h\Cut(x)\qquad(x\in K).
\end{equation}

Put $M=\cl_\iota(S)$.  We claim that
\begin{equation}\label{cl:protected-bad}
 \{x\in K:
       T\cap\cl_\iota(S\cup\{x\})\ne\varnothing\}
 \quad\hbox{is countable}.
\end{equation}
Indeed, if $w\in T\cap\cl_\iota(S\cup\{x\})$, then $w=h(x)$ for one of
the sections $h$ above, and there are only countably many pairs
$(w,h)$. So fix $w\in T$ and a section $h(t)=f_{\iota,e}(\bar a,t)$,
and let $x\in K$ satisfy $h(x)=w$. Then $h$ is defined at $x$, and it
is not constant near $x$, for otherwise~\eqref{cl:constant-old} would
give $w\in\cl_\iota(\bar a)\subseteq M$, contrary to $T\cap
M=\varnothing$. Thus \textup{(a)} and \textup{(b)} are excluded at $x$,
so \textup{(c)} holds. Hence $h$ is injective on a neighborhood of $x$,
and $x$ is an isolated point of
$\{x'\in K:h(x')=w\}$. This set is therefore countable, which
proves~\eqref{cl:protected-bad}.

Both $M$ and the set in~\eqref{cl:protected-bad} are countable, so
their union with $N$ has Hausdorff dimension less than
$r=\dim_{\mathrm H}K$. Choose $z\in K$ outside this union. Then (1)
holds, and (3) follows from~\eqref{cl:cantor-identity}.

For~(2), fix $\bar a$ and $y$, and write $y=h(z)$ with
$h(t)=f_{\iota,e}(\bar a,t)$. Since $y\notin\cl_\iota(\bar a)$,
\eqref{cl:constant-old} shows that $h$ is not constant near $z$. Thus
\textup{(c)} holds for $h$ at $z$, and~\eqref{cl:cantor-image}
gives $y\in E_n(r)$ and $c\leq_h\Cut(y)$. In particular $d$, $\iota^*$
and $\Cut(\bar a)$ are hyperarithmetic in $\Cut(y)$. Since $h$ is
injective near $z$, Lemma~\ref{cl:effective-closure}(3) yields
\[
 \Cut(z)\leq_h\iota^*\oplus\Cut(\bar a)\oplus\Cut(y)
           \leq_h\Cut(y),
\]
which completes the proof of~(2).
\end{proof}

Fix an effective coding $r\mapsto\ulcorner r\urcorner$ of the positive
rationals. Using the join codes of Section~\ref{sec:preliminaries}, we
distinguish the following kinds of reals.
\begin{enumerate}
 \item A \emph{null record} has the form $0^\frown v$, with $v$ arbitrary.
 \item A \emph{generator record} has the form
 \[
 b=1^\frown\bigl(\widehat n\oplus\gamma
        \oplus\widehat{\ulcorner r\urcorner}\oplus\Cut(z)\bigr),
 \]
 where $\iota=(n,\gamma)\in I$, $r\in\Q\cap(0,n)$, and $z\in\R^n$.
 Write $\operatorname{lab}(b)=\iota$,
 $\operatorname{tag}(b)=r$, and $\operatorname{pt}(b)=z$.
 \item A \emph{reservation record} has the form
 \[
 b=2^\frown\bigl(\widehat n\oplus\gamma\oplus\Cut(t)\bigr),
 \qquad (n,\gamma)\in I,\quad t\in\R^n.
 \]
 Its label is $(n,\gamma)$ and its point is $t$.
\end{enumerate}
Let $\operatorname{Gen}$ and $\operatorname{Res}$ denote the sets of
generator and reservation records. They are $\Delta^1_1$, since $I$ is
$\Delta^1_1$ and, once $n$ has been read off, the condition $r<n$ is
decidable and the validity of the vector cut is arithmetic by
Lemma~\ref{lem:cuts}. The label, tag and point of a
record are computable from it, and conversely a record is computable
from its label, its tag (if any) and the cut of its point. Reservation
points are never used as generators; they are the points to be kept
out of the closures.

A \emph{dimension demand} has the canonical form
\[
 p=1^\frown\bigl(\widehat n\oplus\gamma\oplus q\bigr),
 \qquad (n,\gamma)\in I,\quad q\in\omega^\omega.
\]
Its label is $\iota=(n,\gamma)$, and it uses $r_q^n$ and $N_n(q)$.
A \emph{reservation demand} has the form
\[
 p=2^\frown\bigl(\widehat n\oplus\gamma\bigr),
 \qquad(n,\gamma)\in I.
\]
Both kinds of demands form $\Delta^1_1$ sets. Each label has one
reservation demand, and one dimension demand for every code $q$.

For a history $A$ and a label $\iota$, define
\begin{align*}
 S_\iota(A)&=
 \{\operatorname{pt}(b):b\in\operatorname{ran}A\cap\operatorname{Gen},
                         \operatorname{lab}(b)=\iota\},\\
 T_\iota(A)&=
 \{\operatorname{pt}(b):b\in\operatorname{ran}A\cap\operatorname{Res},
                         \operatorname{lab}(b)=\iota\},\\
 M_\iota(A)&=\cl_\iota(S_\iota(A)).
\end{align*}
These are countable subsets of $\R^n$, where $\iota=(n,\gamma)$.
Call $A$ \emph{coherent} if all its entries are null, generator, or
reservation records, and
\begin{equation}\label{cl:coherence}
 T_\iota(A)\cap M_\iota(A)=\varnothing
 \qquad\hbox{for every }\iota\in I.
\end{equation}

\begin{lemma}\label{cl:coherence-effective}
Coherence is a lightface $\Delta^1_1$ property of histories.
\end{lemma}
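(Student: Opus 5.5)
The plan is to write coherence as a conjunction of two conditions and show each is \(\Delta^1_1\) by exhibiting both a \(\Pi^1_1\) and a \(\Sigma^1_1\) form. Alternatively, we build it from \(\Delta^1_1\) pieces using only number quantifiers, which Lemma~\ref{fact:effective}(d) handles. The key point is that a history is a countable sequence whose entries are uniformly available from its code. Therefore quantifiers over entries, and over labels \emph{occurring} in the history, are number quantifiers, not real quantifiers.

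\emph{First conjunct.} The first step is to check that ``every entry is a null, generator, or reservation record'' is \(\Delta^1_1\). This is \((\forall m<|A|)\), a number quantifier, applied to a disjunction of membership in the null-record set (computable), \(\operatorname{Gen}\), or \(\operatorname{Res}\). The latter two were already noted to be \(\Delta^1_1\).

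\emph{Second conjunct.} Condition~\eqref{cl:coherence} quantifies over all \(\iota\in I\), a real quantifier. However, \(T_\iota(A)\) is empty unless \(\iota\) is the label of some reservation entry of \(A\). So \eqref{cl:coherence} is equivalent to the following: for every \(m<|A|\) with \(A(m)\in\operatorname{Res}\), writing \(\iota=\operatorname{lab}(A(m))\) and \(t=\operatorname{pt}(A(m))\) (both computable from \(A(m)\)), we have \(t\notin\cl_\iota(S_\iota(A))\).

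To treat \(S_\iota(A)\) effectively, we do not form an enumeration of it separately. Instead we unfold Lemma~\ref{cl:effective-closure}(2) directly. The condition \(t\in\cl_\iota(S_\iota(A))\) holds iff there exist a finite set of positions \(m_1,\dots,m_j<|A|\) and an index \(e\) such that each \(A(m_i)\in\operatorname{Gen}\) with \(\operatorname{lab}(A(m_i))=\iota\), \(f_{\iota,e}\) has arity \(j\), and \(\operatorname{Eval}(n,\gamma,e,\operatorname{pt}(A(m_1)),\dots,\operatorname{pt}(A(m_j)),t)\) holds. Here we use the remark before Lemma~\ref{cl:effective-closure} that every element of \(\cl_\iota(\bar a)\) is \(f_{\iota,e}(\bar a)\) with exact arity.

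Equality of labels is arithmetic, in fact \(\Pi^0_1\), in the codes. \(\operatorname{Eval}\) is \(\Delta^1_1\) and is applied to reals computable from \(A\). So the whole expression is a number existential over a \(\Delta^1_1\) matrix, hence \(\Delta^1_1\). Its negation, universally quantified over \(m\), stays \(\Delta^1_1\).

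The only delicate point is substituting computable functions of \(A\) (cuts of points, labels) into the \(\Delta^1_1\) relation \(\operatorname{Eval}\), and handling finite or infinite length of \(A\) uniformly. Both are routine closure properties of \(\Delta^1_1\) under recursive substitution. The main conceptual obstacle is the elimination of the real quantifier over \(\iota\in I\), which the reduction to labels occurring in \(A\) resolves.
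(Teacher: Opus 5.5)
Your proposal is correct and follows essentially the same route as the paper. Both arguments use number quantifiers over positions of reservation records and over finite sets of generator positions with the same label, and both rely on finite character and a \(\Delta^1_1\) closure-membership matrix. The only difference is that you unfold Lemma~\ref{cl:effective-closure}(2) directly into \(\operatorname{Eval}\) with exact arity, where the paper simply cites that lemma.
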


\begin{proof}
That every entry of $A$ is a record is a number quantifier applied to
a $\Delta^1_1$ condition. By finite character, \eqref{cl:coherence}
says that for every position of a reservation record in $A$ and every
finite set of positions of generator records with the same label, the
reserved point is not in the closure of these generator points. The
labels are read off from the records, so only number quantifiers
occur, and the matrix is $\Delta^1_1$ by
Lemma~\ref{cl:effective-closure}(2).
\end{proof}

Let $\mathcal P\subseteq
(\omega^\omega)^{\leq\omega}\times\omega^\omega\times\omega^\omega$
be the relation defined by the following alternatives.
\begin{enumerate}
 \item[(P0)] $A$ is not coherent, and $b$ is arbitrary.
 \item[(P1)] $A$ is coherent, $p$ is not a demand, and $b$ is a null record.
 \item[(P2)] $A$ is coherent, $p$ is the reservation demand with label
 $\iota=(n,\gamma)$, and $b$ is a reservation record with that label
 and point $t\notin M_\iota(A)$.
 \item[(P3)] $A$ is coherent, $p$ is the dimension demand with label
 $\iota=(n,\gamma)$ and code $q$, and $b$ is a generator record with
 that label, tag $r_q^n$, and point $z\in\R^n$, satisfying:
 \begin{enumerate}
  \item $z\notin M_\iota(A)\cup N_n(q)$;
  \item $T_\iota(A)\cap
       \cl_\iota(S_\iota(A)\cup\{z\})=\varnothing$;
  \item for every finite tuple $\bar a$ from $S_\iota(A)$ and every
  $y\in\R^n$,
  \begin{equation}\label{cl:selection-output}
  \begin{split}
   y\in\cl_\iota(\bar a,z)\setminus\cl_\iota(\bar a)
   \quad\Longrightarrow\quad
   &y\in E_n(r_q^n)\ \text{ and }\\[-2pt]
   &A\oplus p\oplus\Cut(z)\leq_h\Cut(y).
  \end{split}
  \end{equation}
 \end{enumerate}
\end{enumerate}
Note that $A$ in~\eqref{cl:selection-output} is the whole history,
including the records with other labels.

\begin{lemma}\label{cl:selection-lemma}
The relation $\mathcal P$ is lightface $\Pi^1_1$, and every section
\[
 \mathcal P_{A,p}=\{b:(A,p,b)\in\mathcal P\}
\]
is cofinal in the hyperarithmetic degrees.  If $A$ is coherent and
$(A,p,b)\in\mathcal P$, then any history enumerating
$\operatorname{ran}A\cup\{b\}$ is coherent.
\end{lemma}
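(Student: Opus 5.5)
The proof splits into three parts: the complexity of $\mathcal P$, the cofinality of its sections, and the preservation of coherence.

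\emph{Complexity.} Coherence is $\Delta^1_1$ by Lemma~\ref{cl:coherence-effective}, so (P0) is $\Delta^1_1$. (P1) is $\Delta^1_1$, since being a demand and being a null record are $\Delta^1_1$. In (P2), the condition $t\notin M_\iota(A)$ is $\Delta^1_1$ in $A$ and $\Cut(t)$ by Lemma~\ref{cl:effective-closure}(2): the generator points of label $\iota$ are read off from $A$ by number quantifiers over positions. In (P3):
\begin{itemize}
\item Conditions (a) and (b) are $\Delta^1_1$. Membership in $N_n(q)$ is arithmetic (Proposition~\ref{prop:null-codes}(a)), $r_q^n$ is computable from $q$, and (b) is handled via finite character exactly as in Lemma~\ref{cl:coherence-effective}.
\item Condition (c) has a real quantifier $\forall y$ and a number quantifier over finite tuples of positions. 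Its antecedent is $\Delta^1_1$. The consequent is $y\in E_n(r)$, which is arithmetic by Lemma~\ref{lem:approximation}(c), together with a $\leq_h$ statement, which is $\Pi^1_1$ by Lemma~\ref{fact:effective}(a).
\end{itemize}
So (c) is a universal real quantifier over an implication from a $\Delta^1_1$ antecedent to a $\Pi^1_1$ consequent. This is $\Pi^1_1$, and hence $\mathcal P$, a finite disjunction of $\Pi^1_1$ alternatives, is $\Pi^1_1$.

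\emph{Cofinality.} Fix $A$, $p$ and a target real $u$. We must find $b\in\mathcal P_{A,p}$ with $u\leq_h b$.
\begin{itemize}
\item In cases (P0) and (P1), take $b=0^\frown u$ (or any record computing $u$).
\item In case (P2), $M_\iota(A)$ is countable. A point $t$ computing $u$ can be chosen outside it, since the $t$ with $u\leq_h\Cut(t)$ are uncountable: for instance, points of a perfect set coding $u$ in their digits. Then $b$ computes $t$ and hence $u$.
\item In case (P3), apply Lemma~\ref{cl:extension} with $S=S_\iota(A)$, $T=T_\iota(A)$ (disjoint from $\cl_\iota(S)$ by coherence), $r=r_q^n\in(0,n)$, $N=N_n(q)$ (of dimension $<r$ by Proposition~\ref{prop:null-codes}(b)), $d=A\oplus p$, and the given $u$. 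The resulting $z$ satisfies (a)--(c) of (P3), and $u\leq_h\Cut(z)\leq_h b$.
\end{itemize}
The one point to check in (P3) is that the history code $A\oplus p\oplus\Cut(z)$ demanded in \eqref{cl:selection-output} is exactly what item~(2) of Lemma~\ref{cl:extension} supplies with $d=A\oplus p$.

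\emph{Coherence preservation.} If $b$ is a null record, nothing changes. If $b$ is a reservation record added under (P2), its point lies outside $M_\iota(A)$, and the closures are unchanged. If $b$ is a generator under (P3), clause~(b) gives $T_\iota\cap\cl_\iota(S\cup\{z\})=\varnothing$, and other labels are unaffected. Since $S_\iota$, $T_\iota$ and $M_\iota$ depend only on $\operatorname{ran}$, the conclusion holds for any enumeration of $\operatorname{ran}A\cup\{b\}$.

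The main obstacle is the complexity count for (c). One must be careful that the universal real quantifier sits in front of a $\Pi^1_1$ matrix and is not nested inside a negation. One must also be careful that ``demand with label $\iota$'' and the extraction of $S_\iota(A)$ from $A$ stay $\Delta^1_1$ uniformly in $\iota^*$.
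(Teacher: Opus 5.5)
Your proof is correct and follows the paper's argument essentially step for step: the same $\Delta^1_1$/$\Pi^1_1$ bookkeeping for (P0)--(P3) (with the universal real quantifier in (P3)(c) in front of a $\Delta^1_1\Rightarrow\Pi^1_1$ matrix), the same application of Lemma~\ref{cl:extension} with $d=A\oplus p$ for dimension demands, and the same case check for preserving coherence. The only differences are cosmetic. In the incoherent case you take $0^\frown u$ where the paper takes $u$. For reservation demands you get uncountably many points hyperarithmetically above $u$ by direct digit coding, whereas the paper obtains them from Proposition~\ref{prop:general-cantor} applied to the identity with $s=n/2$.
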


\begin{proof}
Records, demands and coherence are $\Delta^1_1$, and so is membership
in the closure of the points at given positions of $A$, by
Lemma~\ref{cl:effective-closure}(2). Together with
Proposition~\ref{prop:null-codes}, this shows that (P2), (P3)(a) and
(P3)(b) are $\Delta^1_1$. In (P3)(c) the hypothesis of the implication
is $\Delta^1_1$, and its conclusion is $\Pi^1_1$, because membership in
$E_n(r)$ is arithmetic and $\leq_h$ is $\Pi^1_1$. The tuple $\bar a$
is given by finitely many positions in $A$, and the quantifier over
$y$ is a universal real quantifier, so (P3)(c) is $\Pi^1_1$. Hence
$\mathcal P$ is $\Pi^1_1$.

For cofinality, fix $A,p$ and a real $u$. If $A$ is not coherent, take
$b=u$; if $A$ is coherent and $p$ is not a demand, take $b=0^\frown u$.
For a reservation demand with label $\iota=(n,\gamma)$, apply
Proposition~\ref{prop:general-cantor} to the identity of $\R^n$ alone,
with $s=n/2$ and a real $c\in2^\omega$ computing $u$. Every point of
the resulting set computes $u$ in the jump of its cut, and since the
set is uncountable we may choose such a point $t\notin M_\iota(A)$.
Its reservation record $b$ satisfies (P2), and
$u\leq_h\Cut(t)\leq_T b$.

For a dimension demand with label $\iota=(n,\gamma)$ and code $q$,
apply Lemma~\ref{cl:extension} with
\[
 S=S_\iota(A),\qquad T=T_\iota(A),\qquad
 r=r_q^n,\qquad N=N_n(q),\qquad d=A\oplus p.
\]
Its hypotheses hold: $T\cap\cl_\iota(S)=\varnothing$ by coherence, and
$\dim_{\mathrm H}N_n(q)<r_q^n<n$ by Proposition~\ref{prop:null-codes}.
The generator record $b$ of the resulting point $z$, with tag $r_q^n$,
satisfies (P3), and $u\leq_h\Cut(z)\leq_T b$.

Finally, let $A$ be coherent and $(A,p,b)\in\mathcal P$. A null record
changes none of the sets $S_\iota$ and $T_\iota$. Under (P2) a point
outside $M_\iota(A)$ is added to $T_\iota(A)$, and under (P3) the
new generator keeps the closure disjoint from $T_\iota(A)$, by
(P3)(b). The other labels are not affected, so coherence is preserved.
\end{proof}

Applying Theorem~\ref{fact:selection} to $\mathcal P$, we obtain a
lightface $\Pi^1_1$ set $Y$ compatible with $\mathcal P$, witnessed by
\[
 \omega^\omega=\{p_\xi:\xi<\omega_1\},\qquad
 Y=\{b_\xi:\xi<\omega_1\},
\]
and histories $A_\xi$ with
\[
 \operatorname{ran}A_\xi=\{b_\eta:\eta<\xi\},
 \qquad(A_\xi,p_\xi,b_\xi)\in\mathcal P.
\]
We show by induction on $\xi$ that every $A_\xi$ is coherent. Suppose
that $A_\eta$ is coherent for all $\eta<\xi$. Then each $b_\eta$,
$\eta<\xi$, is a record, and any history with range
$\operatorname{ran}A_\eta\cup\{b_\eta\}$ is coherent by
Lemma~\ref{cl:selection-lemma}. If~\eqref{cl:coherence} failed for
$A_\xi$, the failure would be witnessed, by finite character, by one
reservation record and finitely many generator records of $A_\xi$;
if $\eta$ is the largest of their stages, they all lie in
$\operatorname{ran}A_\eta\cup\{b_\eta\}$, a contradiction. Thus
alternative (P0) never occurs: every dimension demand is answered by
a generator record and every reservation demand by a reservation
record.

For $\iota=(n,\gamma)\in I$ and $\alpha\in[0,n]$, define
\begin{align*}
 S_\iota(\alpha)&=
 \{\operatorname{pt}(b):b\in Y\cap\operatorname{Gen},
       \operatorname{lab}(b)=\iota,
       \operatorname{tag}(b)<\alpha\},\\
 A_\iota(\alpha)&=\cl_\iota(S_\iota(\alpha)).
\end{align*}
Clearly each $A_\iota(\alpha)$ is closed under $\mathcal F_\iota$,
$A_\iota(\alpha)\subseteq A_\iota(\beta)$ for $\alpha\leq\beta$, and
$A_\iota(0)=\cl_\iota(\varnothing)$, since all tags are positive.

\begin{lemma}[Finite support and recovery]\label{cl:finite-support}
Fix $\iota=(n,\gamma)\in I$.  If $0<\alpha\leq n$ and
$y\in A_\iota(\alpha)\setminus\cl_\iota(\varnothing)$, then
there are generator records $b_{\xi_1},\ldots,b_{\xi_m}$, with
$\xi_1<\cdots<\xi_m$ and $m\geq1$, such that, writing
$z_i=\operatorname{pt}(b_{\xi_i})$ and
$r_i=\operatorname{tag}(b_{\xi_i})$,
\begin{enumerate}
 \item their labels are $\iota$ and every $r_i<\alpha$;
 \item $y\in\cl_\iota(z_1,\ldots,z_m)$;
 \item $y\in E_n(r_m)$;
 \item $b_{\xi_i}\leq_h\Cut(y)$ for $1\leq i\leq m$.
\end{enumerate}
\end{lemma}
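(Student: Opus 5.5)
By finite character, \(y\in\cl_\iota(F)\) for some finite \(F\subseteq S_\iota(\alpha)\). Since \(y\notin\cl_\iota(\varnothing)\), \(F\) is nonempty. Among all such finite supports I would choose one that is minimal, in the following sense. First minimize the largest stage \(\xi_m\) at which a point of \(F\) was added. Then, with that maximum fixed, minimize the cardinality of \(F\).

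Write the chosen support as generator records \(b_{\xi_1},\dots,b_{\xi_m}\) with \(\xi_1<\dots<\xi_m\). Each record has label \(\iota\) and tag \(r_i<\alpha\), because its point lies in \(S_\iota(\alpha)\). This gives (1) and (2). Each \(b_{\xi_i}\) is a generator record, so it arose under alternative (P3) with some history \(A_{\xi_i}\) and demand \(p_{\xi_i}\). Put \(\bar a=(z_1,\dots,z_{m-1})\). These points come from records at stages below \(\xi_m\), so they lie in \(\operatorname{ran}A_{\xi_m}\), and \(\bar a\) is a tuple from \(S_\iota(A_{\xi_m})\).

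Next I check that \(y\notin\cl_\iota(\bar a)\). If \(m=1\) this is the hypothesis \(y\notin\cl_\iota(\varnothing)\). If \(m>1\) and \(y\in\cl_\iota(\bar a)\), then \(\{z_1,\dots,z_{m-1}\}\) would be a support whose largest stage is \(\xi_{m-1}<\xi_m\), contradicting minimality. Hence \(y\in\cl_\iota(\bar a,z_m)\setminus\cl_\iota(\bar a)\). Condition \eqref{cl:selection-output} applied at stage \(\xi_m\) then gives two facts. First, \(y\in E_n(r_m)\), which is (3). Second, \(A_{\xi_m}\oplus p_{\xi_m}\oplus\Cut(z_m)\leq_h\Cut(y)\).

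For (4), the history \(A_{\xi_m}\) lists every earlier record, and its entries are uniformly computable from its code. So each \(b_{\xi_i}\) with \(i<m\) is computable from \(A_{\xi_m}\) once its position is fixed, and that position is finite data. The record \(b_{\xi_m}\) is computable from its label \(\iota\), its tag \(r_m\) and \(\Cut(z_m)\). The tag is a rational, so it is finite data. The label satisfies \(\iota^*\le_h p_{\xi_m}\), since the demand \(p_{\xi_m}\) contains \(\widehat n\oplus\gamma\). Thus every \(b_{\xi_i}\) is hyperarithmetic in \(\Cut(y)\).

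The only delicate point is the choice of minimality. It must be made with respect to the largest stage, not the number of points. This ensures that removing the last generator genuinely leaves \(y\) outside the closure, which is exactly what (P3)(c) needs. The tags of the earlier generators play no role in this step.
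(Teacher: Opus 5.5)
Your proof is correct and is essentially the paper's argument: choose a minimal finite support, apply (P3)(c) at the last stage $\xi_m$ with $\bar a=(z_1,\ldots,z_{m-1})$, read the earlier records off $A_{\xi_m}$, and read the last one off $p_{\xi_m}$ and $\Cut(z_m)$. Your closing remark is mistaken, however. The paper simply takes a support of minimal cardinality, which works just as well, since $y\in\cl_\iota(z_1,\ldots,z_{m-1})$ would give a smaller support; minimizing the largest stage is one valid choice rather than a necessity, and your secondary minimization of cardinality is never used.
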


\begin{proof}
By finite character, $y$ lies in the closure of finitely many points of
$S_\iota(\alpha)$. Choose such a set of minimal size, and list the
corresponding records in the order of their stages. This gives (1)
and (2), and $m\geq1$ since $y\notin\cl_\iota(\varnothing)$. Put
$\xi=\xi_m$ and $\bar a=(z_1,\ldots,z_{m-1})$. By minimality,
\[
 y\in\cl_\iota(\bar a,z_m)\setminus\cl_\iota(\bar a),
\]
and $\bar a$ is a tuple from $S_\iota(A_\xi)$. Condition (P3)(c) at
stage $\xi$ therefore gives (3), together with
\[
 A_\xi\oplus p_\xi\oplus\Cut(z_m)\leq_h\Cut(y).
\]
For $i<m$, the record $b_{\xi_i}$ is an entry of $A_\xi$, hence
hyperarithmetic in $\Cut(y)$. The record $b_\xi$ is computable from
$p_\xi$ and $\Cut(z_m)$, as the demand $p_\xi$ determines the label and
the tag. This proves (4).
\end{proof}

We stress that $y$ may well lie in the closure of all the generators
chosen before stage $\xi_m$; the argument only uses the generators in
a minimal support of $y$, and this is why the output condition in
(P3)(c) is imposed over every finite tuple $\bar a$. Neither this
argument nor the rest of the proof uses an exchange property of
$\cl_\iota$.

\begin{proof}[Proof of Theorem~\ref{thm:A}]
Fix a label $\iota=(n,\gamma)\in I$. We first verify (1)--(3) for the
sets $A_\iota(\alpha)$ defined above, and then show that they are the
sections of a single lightface $\Pi^1_1$ set $G$.

Let $0<\alpha\leq n$. By Lemma~\ref{cl:finite-support}(3),
\[
 A_\iota(\alpha)\subseteq
 \cl_\iota(\varnothing)\ \cup\!
 \bigcup_{\substack{r\in\Q\\0<r<\alpha}}E_n(r),
\]
and since $\cl_\iota(\varnothing)$ is countable,
$\dim_{\mathrm H}A_\iota(\alpha)\leq\alpha$ by
Lemmas~\ref{lem:hausdorff}(b) and~\ref{lem:approximation}. Suppose
that $\dim_{\mathrm H}A_\iota(\alpha)<\alpha$, and choose a rational
$r$ with $\dim_{\mathrm H}A_\iota(\alpha)<r<\alpha$; note that $r<n$
even if $\alpha=n$. By Proposition~\ref{prop:null-codes}(c) there is a
code $q$ with $r_q^n=r$ and $A_\iota(\alpha)\subseteq N_n(q)$. The
dimension demand for $\iota$ and $q$ is answered by a generator of tag
$r<\alpha$, which therefore lies in $A_\iota(\alpha)$; but by (P3)(a)
it lies outside $N_n(q)$, a contradiction. Hence
$\dim_{\mathrm H}A_\iota(\alpha)=\alpha$. For $\alpha=0$ the set
$A_\iota(0)=\cl_\iota(\varnothing)$ is countable.

The reservation demand for $\iota$ is answered by a record with some
point $t_\iota\in\R^n$. If $t_\iota$ were in $A_\iota(n)$, it would be
in the closure of finitely many generators with label $\iota$. All of
their records, and the reservation record, appear in $A_\xi$ for some
$\xi<\omega_1$, and $A_\xi$ would not be coherent. Hence
$t_\iota\notin A_\iota(n)$, and every
$A_\iota(\alpha)$ is proper. (The reservation demand may well come
after some generators with label $\iota$ have been chosen; what
matters is that the closure at its stage is countable, and that
coherence protects $t_\iota$ from then on.)

The inclusions $A_\iota(\alpha)\subseteq A_\iota(\beta)$ for
$\alpha<\beta$ are strict because the two sets have different
dimensions. Let $\alpha>0$ and $y\in A_\iota(\alpha)$, and let $F$ be
a finite set of generators of tag less than $\alpha$ with
$y\in\cl_\iota(F)$. Choose a rational $r$ with $0<r<\alpha$ that
exceeds the tags of all points of $F$. Then $y\in A_\iota(r)$, and
together with monotonicity this proves
\[
 A_\iota(\alpha)=
 \bigcup_{\substack{r\in\Q\\0<r<\alpha}}A_\iota(r).
\]

It remains to verify joint definability.
For $(n,\gamma)\in I$, $\alpha\in[0,n]$ and $y\in\R^n$, put
\[
 v=\widehat n\oplus\gamma\oplus\Cut(\alpha)\oplus\Cut(y),
\]
and let $W(v,t)$ say that $t$ codes a nonempty finite list
$(c_1,\ldots,c_m)$ of generator records such that
\[
 c_i\in Y,\qquad
 \operatorname{lab}(c_i)=(n,\gamma),\qquad
 \operatorname{tag}(c_i)<\alpha
 \quad(1\leq i\leq m),
\]
and
\[
 y\in\cl_{(n,\gamma)}
        (\operatorname{pt}(c_1),\ldots,\operatorname{pt}(c_m)).
\]
The relation $W$ is lightface $\Pi^1_1$: membership in $Y$ is
$\Pi^1_1$, the remaining clauses are $\Delta^1_1$ by
Lemma~\ref{cl:effective-closure}(2), and only number quantifiers
over the entries of the list are involved. We claim that, for
$\iota=(n,\gamma)$,
\begin{equation}\label{cl:sg-definition}
 \begin{split}
 y\in A_\iota(\alpha)\quad\Longleftrightarrow\quad
 &y\in\cl_\iota(\varnothing)\ \text{ or }\\[-2pt]
 & (\exists t\leq_h v)\,W(v,t).
 \end{split}
\end{equation}
The implication from right to left is clear. Conversely, if
$y\in A_\iota(\alpha)\setminus\cl_\iota(\varnothing)$, then
$\alpha>0$, and the records given by Lemma~\ref{cl:finite-support} are
hyperarithmetic in $\Cut(y)$, hence in $v$; their list witnesses
$W(v,t)$ for some $t\leq_h v$. Since $y\in\cl_\iota(\varnothing)$ is
a $\Delta^1_1$ condition, the Spector--Gandy theorem,
Lemma~\ref{fact:effective}(b), shows that
\[
 G=\{(n,\gamma,\alpha,y):(n,\gamma)\in I,\
          0\leq\alpha\leq n,\ y\in\R^n,\ y\in A_{(n,\gamma)}(\alpha)\}
\]
is lightface $\Pi^1_1$. Its sections $A_\iota(\alpha)$ are then
$\Pi^1_1(\gamma,\alpha)$.

If the family is given relative to a real $\delta$, the same argument
works with $\delta$ added as a parameter throughout: we use the
selection theorem relative to $\delta$, and in the proof of
Lemma~\ref{cl:extension} we let $c$ compute $\delta$ as well. This
gives $G\in\Pi^1_1(\delta)$.
\end{proof}

For the field applications, take \(n=1\) and the single label
\(I=\{(1,\gamma_0)\}\), where \(\gamma_0\) is the constant zero
sequence, and omit the label from the notation; the following
observation will verify (F3).

\begin{lemma}\label{lem:one-variable}
Let \(h\) be a partial map \(\R\rightharpoonup\R\), and let
\(D\subseteq\R\) be a finite set such that on each component of
\(\R\setminus D\) the map \(h\) is undefined, or constant, or \(C^1\)
with nowhere vanishing derivative. Then \(h\) satisfies \textup{(F3)}
with \(\varepsilon_h=1/(2|D|+2)\).
\end{lemma}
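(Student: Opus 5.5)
Given a compact interval \(P=[p,p+L]\), the plan is to locate a subinterval \(Q'\) of length at least \(L/(2|D|+2)\) whose interior neighborhood misses \(D\), and then read off alternative (a), (b) or (c) from the component of \(\R\setminus D\) containing it. Put \(m=|D\cap P|\le|D|\). Divide \(P\) into \(2m+2\) consecutive closed subintervals of equal length \(L/(2m+2)\geq L/(2|D|+2)\). Each point of \(D\) lies in at most two of these pieces (exactly two only when it is a shared endpoint), so at most \(2m\) pieces meet \(D\). At least two pieces therefore miss \(D\); pick one and call it \(Q'\). Since \(Q'\) is compact and disjoint from the finite set \(D\), it has positive distance from \(D\). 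Hence some open interval \(V\supseteq Q'\) avoids \(D\), and \(V\) lies in a single component \(J\) of \(\R\setminus D\), because \(V\) is connected.

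Now sort by the behavior of \(h\) on \(J\). If \(h\) is undefined on \(J\), alternative (a) holds on \(V\). If \(h\) is constant on \(J\), then (b) holds. If \(h\) is \(C^1\) on \(J\) with \(h'\ne0\) everywhere, then \(h'\) has constant sign on the interval \(J\) by the intermediate value theorem for derivatives (or by continuity of \(h'\)). So \(h\) is strictly monotone on \(J\), and \(h(V)\) is an open interval. On the compact interval \(Q'\), \(|h'|\) is bounded above and away from zero. Shrinking \(V\) to a slightly larger compact interval's interior around \(Q'\), still inside \(J\), gives bounds \(0<c\le|h'|\le C\) on \(V\). The mean value theorem then shows that \(h|_V\) is a bi-Lipschitz homeomorphism onto the open interval \(h(V)\), which is (c).

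The only step needing care is the counting. Endpoints of pieces can coincide with points of \(D\), and \(D\) may meet \(P\) only in its boundary; both are already covered by the ``at most two pieces per point'' bound. The degenerate case \(D=\varnothing\) gives \(\varepsilon_h=1/2\) and is trivially covered. Neither the choice of \(V\) nor the constants need to be effective, which matches (F3). Finally, since \(P\) was an arbitrary compact cube in \(\R\), this yields (F3) (equivalently (QT)) with \(\varepsilon_h=1/(2|D|+2)\).
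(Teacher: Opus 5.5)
Your proof is correct and follows essentially the paper's route: find a subinterval of proportional length with an open neighborhood missing \(D\), then read off (a)--(c) from the component of \(\R\setminus D\) containing it, using that \(h\) is bi-Lipschitz wherever \(|h'|\) is bounded above and away from zero. The only difference is the pigeonhole step: the paper takes a longest of the at most \(|D|+1\) pieces into which \(D\) cuts the interval and keeps its closed middle half, with the interior of that piece as the neighborhood, whereas you use an equal grid of \(2m+2\) cells of which at most \(2m\) meet \(D\); both give the constant \(1/(2|D|+2)\).
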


\begin{proof}
On an open interval where \(h\) is \(C^1\) with nowhere vanishing
derivative, \(h\) is strictly monotone, and near each point
\(|h'|\) is bounded above and away from zero; hence \(h\) is a local
bi-Lipschitz homeomorphism there. Thus on each component of
\(\R\setminus D\), and on every open subset of that component,
one of \textup{(a)--(c)} holds for \(h\). Put \(d=|D|\).
A compact interval \(Q\) is split by \(D\)
into at most \(d+1\) pieces, so one of them has length at least
\(\ell(Q)/(d+1)\). The closed middle half \(Q'\) of this piece
satisfies \(\ell(Q')\geq\ell(Q)/(2d+2)\), and the interior of the
piece is an open neighborhood of \(Q'\) on which one of
\textup{(a)--(c)} holds for \(h\).
\end{proof}

Field terms are built from variables \(x_1,x_2,\ldots\) and rational
constants by means of \(+\), \(-\), \(\cdot\) and the inverse
\(s\mapsto s^{-1}\). Fix a
computable enumeration \((t_e,m_e)_{e\in\omega}\) of the pairs
consisting of a field term and a number \(m\) such that the variables
of the term are among \(x_1,\ldots,x_m\). For \(\bar a\in\R^{m_e}\), the
value \(t_e(\bar a)\) is defined by recursion on the term, with
\(s^{-1}\) defined exactly when \(s\) is defined and nonzero; let
\(f_e\) be the partial map \(\bar a\mapsto t_e(\bar a)\) of arity
\(m_e\).

By recursion on a term \(t\) in \(x_1,\ldots,x_m\), we compute rational
polynomials \(P_t,Q_t\) and a finite list \(R_{t,1},\ldots,R_{t,k}\) of
rational polynomials in \(x_1,\ldots,x_m\) such that, for every
\(\bar a\in\R^m\), \(t(\bar a)\) is defined if and only if
\(R_{t,i}(\bar a)\ne0\) for all \(i\), and in that case
\(Q_t(\bar a)\ne0\) and \(t(\bar a)=P_t(\bar a)/Q_t(\bar a)\). For
variables and constants this is clear, with an empty list. For
\(s\pm s'\) and \(s\cdot s'\) we join the two lists and use the usual
formulas for fractions; for \(s^{-1}\) we add \(P_s\) to the list of
\(s\) and put \(P_{s^{-1}}=Q_s\), \(Q_{s^{-1}}=P_s\).
Thus \(t\) defines exactly the same partial map as the fraction
\(P_t\prod_iR_{t,i}/(Q_t\prod_iR_{t,i})\). Conversely, every fraction
of polynomials with rational coefficients is a field term, so this is
precisely the family used in the Introduction.

\begin{proof}[Proof of Corollary~\ref{cor:subfields}]
We check the conditions of Definition~\ref{def:function-family} for
\(\mathcal F=\{f_e:e\in\omega\}\). The family contains the projections
and the rational constants. It is closed under permutation and
identification of variables and under dummy variables, since these
amount to substitutions of variables in the terms. It is closed under
composition, since
\(\bar a\mapsto t(s_1(\bar a),\ldots,s_k(\bar a))\) is given by the
term \(t(s_1,\ldots,s_k)+0\cdot(s_1+\cdots+s_k)\); the second summand
makes it undefined wherever one of the \(s_i\) is. This proves (F1).
The arity \(m_e\) is computable from \(e\), and
\[
 f_e(\bar a)=y
 \quad\Longleftrightarrow\quad
 \bigwedge_{i}R_{t_e,i}(\bar a)\ne0
 \quad\hbox{and}\quad
 Q_{t_e}(\bar a)\,y-P_{t_e}(\bar a)=0
\]
is arithmetic, uniformly in \(e\), \(\bar a\) and \(y\), by
Lemma~\ref{lem:cuts}(c). This proves (F2).

For (F3), let \(h\) be a unary section of some \(f_e\); after a
permutation of variables, \(h(x)=t(\bar a,x)\) for a field term \(t\)
and a real tuple \(\bar a\). Put \(p(x)=P_t(\bar a,x)\),
\(q(x)=Q_t(\bar a,x)\) and \(r_i(x)=R_{t,i}(\bar a,x)\). If some
\(r_i\) is the zero polynomial, \(h\) is nowhere defined. Otherwise
let \(D_0\) be the finite set of zeros of the \(r_i\); on
\(\R\setminus D_0\) the map \(h\) is defined and equal to \(p/q\). If
the polynomial \(p'q-pq'\) vanishes identically, \(h\) is constant on
each component of \(\R\setminus D_0\). If not, then on each component
of the complement of \(D_0\) and of the zeros of \(p'q-pq'\), \(h\) is
\(C^1\) with nowhere vanishing derivative \((p'q-pq')/q^2\). In either
case Lemma~\ref{lem:one-variable} gives (F3).

For \(A\subseteq\R\), \(\cl(A)\) is the set of values of field terms at
tuples from \(A\), that is, the subfield \(\Q(A)\) generated by \(A\).
Hence the sets closed under \(\mathcal F\) are exactly the subfields of
\(\R\), and \(\cl(\varnothing)=\Q\). Theorem~\ref{thm:A} gives a
lightface \(\Pi^1_1\) set, and we let \(G_{\mathrm{fld}}\) be its section
at the computable label \((1,\gamma_0)\), which is again lightface
\(\Pi^1_1\). Its sections \(F(\alpha)\) are the sets \(A(\alpha)\) of
the theorem and have the required properties.
\end{proof}

\begin{proof}[Proof of Corollary~\ref{cor:exponential}]
We apply Theorem~\ref{thm:A} with \(n=1\) to the family
\(\mathcal F\) of all \(\varnothing\)-definable partial functions
\(\R^m\rightharpoonup\R\), \(m\geq0\), of \(\R_{\exp}\).
This family is countable and satisfies (F1).
Wilkie's theorem \cite{Wil96} gives model completeness and
o-minimality of \(\R_{\exp}\); we use these to verify (F2) and (F3),
respectively.

For (F2), fix a computable enumeration
\((m_e,k_e,\tau_e)_{e\in\omega}\), where \(\tau_e(\bar x,y,\bar z)\)
is a term in the ordered-ring language with \(\exp\),
\(|\bar x|=m_e\) and \(|\bar z|=k_e\). Put
\[
 R_e(\bar x,y)\quad\Longleftrightarrow\quad
 \exists\bar z\in\R^{k_e}\ \tau_e(\bar x,y,\bar z)=0,
\]
and let \(f_e(\bar x)\) be the unique \(y\) satisfying \(R_e(\bar x,y)\)
when there is exactly one such \(y\), and undefined otherwise.
By model completeness, every \(\varnothing\)-definable graph has
an existential definition. The usual effective encoding of
inequalities and Boolean combinations by equations with auxiliary
variables puts each such definition in the displayed form.
Thus \(\mathcal F=\{f_e:e\in\omega\}\), with computable arities.

Truth of existential formulas is arithmetic, uniformly in the formula
and the cuts of its real inputs. Indeed, each term is a uniformly
computable continuous function, and compactness gives
\[
 \exists\bar z\in\R^k\ \tau(\bar a,\bar z)=0
 \quad\Longleftrightarrow\quad
 \exists N\geq1\ \forall j\geq1\
 \exists\bar q\in(\Q\cap[-N,N])^k\
 |\tau(\bar a,\bar q)|<2^{-j}.
\]
The right-hand side has only number quantifiers, and strict
inequalities for computable continuous terms are semidecidable from
the input cuts. The same observation applies after the effective
equational encoding of any existential formula. Consequently
\[
 f_e(\bar a)=y
 \quad\Longleftrightarrow\quad
 R_e(\bar a,y)\ \land\
 \neg\exists v\,\bigl(v\ne y\land R_e(\bar a,v)\bigr)
\]
is arithmetic uniformly in \(e,\bar a,y\), proving (F2).
This enumeration does not require an effective procedure for finding
an existential equivalent of an arbitrary formula.

For (F3), let \(h:\R\rightharpoonup\R\) be any partial function
definable, with parameters, in an o-minimal expansion of the real
field. Its domain is a finite union of points and intervals.
The standard piecewise \(C^1\) theorem
\cite[Chapter~7, \S3]{vdD98} gives a finite partition of \(\R\) into
points and open intervals such that, on each open interval, \(h\)
is either undefined or \(C^1\).
On each interval of the latter kind, \(h'\) is definable, so its zero
set is a finite union of points and intervals. Refining the partition
at the endpoints and isolated points of these zero sets makes \(h'\)
either identically zero or nowhere zero on each remaining open
interval. In the former case \(h\) is constant by the mean value
theorem. Lemma~\ref{lem:one-variable} therefore gives (F3).
Fixing the other arguments of a map in \(\mathcal F\) preserves
definability with parameters, so this applies to every required section.

The \(\mathcal F\)-closure of \(A\subseteq\R\) is precisely
\(\dcl_{\R_{\exp}}(A)\). Theorem~\ref{thm:A} consequently gives
\(G_{\exp}\) and all the asserted dimension and chain properties,
including \(E(0)=\dcl_{\R_{\exp}}(\varnothing)\).
Each section is definably closed; definable choice for o-minimal
expansions of ordered fields \cite{vdD98} and the Tarski--Vaught test
show that it is an elementary substructure of \(\R_{\exp}\).
Its ordered-field reduct is therefore real closed.
\end{proof}

\section{Proof of Theorem~\ref{thm:B}}\label{sec:bases}

Fix a coded family of pregeometries as in
Definition~\ref{def:pregeometry-family}.
A label is a triple \(\lambda=(k,\gamma,s)\) with
\((k,\gamma)\in I\) and \(0\le s\le k\).
Write \(\cl_\lambda=\cl_{(k,\gamma)}\).
The set of labels is lightface \(\Delta^1_1\).
For a label \(\lambda\), a real \(q\), and \(x\in\R^k\), put
\[
 W(\lambda,q,x)
 \quad\Longleftrightarrow\quad
        r_q^k\ge s\ \hbox{or}\ x\notin N_k(q).
\]
This relation is arithmetic.
We build a basis for every label in a single recursion; one stage adds a
finite batch to at most one label.

\begin{lemma}\label{lem:basis-extension}
Let \(\lambda=(k,\gamma,s)\) be a label, let \(J\subseteq\R^k\) be
countable and \(\cl_\lambda\)-independent, and let \(u\in\omega^\omega\).
\begin{enumerate}[label=\textup{(\alph*)}]
\item If \(y\notin\cl_\lambda(J)\), there are distinct points
      \(x,x'\in U_k(s)\setminus J\) such that
      \(J\cup\{x,x'\}\) is independent,
      \(y\in\cl_\lambda(\{x,x'\})\),
      \(u\leqT x,\ u\leqT x'\), and \(x\equivh x'\).
\item For every \(q\in\omega^\omega\) there is
      \(x\in U_k(s)\setminus\cl_\lambda(J)\) such that
      \(u\leqT x\) and \(W(\lambda,q,x)\).
\end{enumerate}
\end{lemma}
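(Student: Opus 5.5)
The plan is to obtain both parts from the coded reflection sets \(K_k(d,z,c)\) of Proposition~\ref{prop:reflection-cantor}. We fix \(c\in2^\omega\) with \(u\leqT c\), and for part (a) we let \(c\) also compute \(\Cut(y)\). Every point of such a set lies in \(U_k(d)\), and we have \(U_k(d)\subseteq U_k(s)\) whenever \(d\le s\) by Lemma~\ref{lem:approximation}(b). So it suffices to pick \(d\in[0,k)\) with \(d\le s\), and then choose a point of the perfect set \(K\) outside a set that is countable, or at least of dimension smaller than \(\dimH K\).

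For (a), take \(K=K_k(0,y,c)\) and put \(x'=y-x\) for \(x\in K\).
\begin{itemize}
\item Proposition~\ref{prop:reflection-cantor}(ii) gives \(x,x'\in U_k(0)\subseteq U_k(s)\).
\item Proposition~\ref{prop:reflection-cantor}(iii) gives \(c\leqT x\) and \(c\leqT x'\), hence \(u\leqT x\) and \(u\leqT x'\).
\item Since \(c\) computes \(\Cut(y)\), Lemma~\ref{lem:cuts}(c) shows that \(\Cut(y-x)\) is arithmetic in \(\Cut(x)\oplus\Cut(y)\), hence in \(\Cut(x)\). Symmetrically \(\Cut(x)\) is arithmetic in \(\Cut(x')\). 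So \(x\equivh x'\).
\item By (C2), \(y=x+x'\in\cl_\lambda(\{x,x'\})\).
\end{itemize}
The set \(\cl_\lambda(J\cup\{y\})\) is countable by (C1), while \(K\) is uncountable because it is perfect. So we choose \(x\in K\setminus\cl_\lambda(J\cup\{y\})\). Then \(x\notin\cl_\lambda(J)\supseteq J\), and \(J\cup\{x\}\) is independent by Lemma~\ref{lem:independence}(b).

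Suppose \(x'\in\cl_\lambda(J\cup\{x\})\). Then \(y\in\cl_\lambda(J\cup\{x\})\), because \(y\in\cl_\lambda(\{x,x'\})\) and \(\cl_\lambda\) is idempotent. Since \(y\notin\cl_\lambda(J)\), exchange gives \(x\in\cl_\lambda(J\cup\{y\})\), contradicting the choice of \(x\). Hence \(x'\notin\cl_\lambda(J\cup\{x\})\). By Lemma~\ref{lem:independence}(b) again, \(J\cup\{x,x'\}\) is independent. Also \(x'\notin J\) and \(x'\ne x\), since both \(J\) and \(x\) lie in \(\cl_\lambda(J\cup\{x\})\). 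This exchange step, choosing the single avoided set \(\cl_\lambda(J\cup\{y\})\) so that independence of the pair follows, is the only delicate point.

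For (b), split according to \(q\).
\begin{itemize}
\item If \(r_q^k\ge s\), then \(W(\lambda,q,x)\) holds for every \(x\). Take \(K=K_k(0,0,c)\) and any \(x\in K\setminus\cl_\lambda(J)\).
\item Otherwise \(r_q^k<s\), and \(r_q^k<k\) by definition. Choose \(d\) with \(r_q^k<d\le s\) and \(d<k\). Take \(K=K_k(d,0,c)\), so \(\dimH K=d\). By Proposition~\ref{prop:null-codes}(b), \(\dimH N_k(q)<r_q^k<d\), and \(\cl_\lambda(J)\) is countable. So \(K\not\subseteq N_k(q)\cup\cl_\lambda(J)\), and we pick \(x\) outside this union.
\end{itemize}
In both cases \(x\in U_k(d)\subseteq U_k(s)\) and \(u\leqT c\leqT x\). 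The case \(s=0\) falls under the first alternative, because \(r_q^k>0\).
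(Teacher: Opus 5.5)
Your proposal is correct and matches the paper's proof step for step: the same reflection set \(K_k(0,y,c)\) with \(c\) computing \(\Cut(y)\) and \(u\), the same choice of \(x\) outside \(\cl_\lambda(J\cup\{y\})\), and the same exchange argument for independence of \(J\cup\{x,x'\}\). The only difference is cosmetic and lies in (b). There you allow any \(d\in(r_q^k,s]\) with \(d<k\) and compare Hausdorff dimensions. The paper instead takes \(d=r_q^k\) and compares \(\Haus^t\) for some \(\theta_q^k<t<r_q^k\).
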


\begin{proof}
Choose \(c_u\in2^\omega\) coding the graph of \(u\), so \(u\leqT c_u\).
For (a), use Proposition~\ref{prop:reflection-cantor} with
\[
             d=0,\qquad z=y,\qquad c=\Cut(y)\oplus c_u.
\]
It gives a perfect set \(K\subseteq[0,1]^k\).
By (C1), \(\cl_\lambda(J\cup\{y\})\) is countable, so choose
\[
                 x\in K\setminus\cl_\lambda(J\cup\{y\}),
                 \qquad x'=y-x.
\]
Then \(J\cup\{x\}\) is independent, and (C2) gives
\(y=x+x'\in\cl_\lambda(\{x,x'\})\).
If \(x'\in\cl_\lambda(J\cup\{x\})\), then
\(y\in\cl_\lambda(J\cup\{x\})\).
Since \(y\notin\cl_\lambda(J)\), exchange would imply
\(x\in\cl_\lambda(J\cup\{y\})\), a contradiction.
Thus \(J\cup\{x,x'\}\) is independent, in particular \(x\ne x'\) and
both points are outside \(J\).
The reflection proposition gives \(x,x'\in U_k(0)\subseteq U_k(s)\)
and makes both points compute \(u\) and \(y\).
Subtraction is arithmetic on cuts; hence \(x'\leqh x\) and
\(x\leqh x'\).

For (b), set \(d=r_q^k\) if \(r_q^k<s\), and \(d=0\) otherwise.
Use Proposition~\ref{prop:reflection-cantor} with parameters
\((k,d,0,c_u)\).
Its perfect set \(K\) lies in \(U_k(d)\subseteq U_k(s)\), and every
point computes \(u\).
If \(r_q^k\ge s\), choose any point of
\(K\setminus\cl_\lambda(J)\).
If \(r_q^k<s\), choose \(t\) with
\(\theta_q^k<t<r_q^k=d\).
Then
\[
 \Haus^t\bigl(N_k(q)\cup\cl_\lambda(J)\bigr)=0,
 \qquad
 \Haus^t(K)>0.
\]
Choose \(x\) outside that union.
In either case \(W(\lambda,q,x)\) holds and adjoining \(x\) preserves
independence.
\end{proof}

Within this section, call a real \(b\) null if \(b(0)=0\).
A nonnull \emph{record} has \(b(0)=m\ge1\) and
\[
 b^+=\widehat k\oplus\gamma\oplus\Cut(s)
              \oplus\Cut(x_1)\oplus\cdots\oplus\Cut(x_m),
\]
where \((k,\gamma,s)\) is a label and
\(\Cut(x_1)<_{\mathrm{lex}}\cdots<_{\mathrm{lex}}\Cut(x_m)\).
Write
\(\lab(b)=(k,\gamma,s)\) and \(\bat(b)=\{x_1,\ldots,x_m\}\).
The set \(\mathrm{Rec}\) of records is \(\Delta^1_1\).
Its label and entries are uniformly readable from the record.
For every label \(\lambda\) and finite nonempty batch \(J\), there is
exactly one such record. It is arithmetic in \(\lambda\) and any enumeration
of \(J\).

A \emph{demand} is a real \(p\) with \(p(0)=e\in\{0,1\}\) and
\[
              p^+=\widehat k\oplus\gamma\oplus\Cut(s)\oplus\pi
\]
for a label \(\lambda=(k,\gamma,s)\).
If \(e=0\), require \(\pi=\Cut(y)\) for some \(y\in\R^k\); this is the
spanning demand for \(y\).
If \(e=1\), let \(\pi=q\in\omega^\omega\); this is the dimension demand
for \(q\).
The set \(\mathrm{Dem}\) is \(\Delta^1_1\).
Every spanning and dimension demand occurs among the reals.

For a history \(a\), define
\[
 J_\lambda(a)=
 \bigcup\{\bat(b):b\in\ran a\cap\mathrm{Rec},\ \lab(b)=\lambda\}.
\]
Call \(a\) coherent if every entry is null or a record and each
\(J_\lambda(a)\) is independent.
These conditions depend only on \(\ran a\).
The sets \(J_\lambda(a)\) are countable and are empty unless the label
appears in \(a\).

Define \(\mathscr F_{\mathrm{bas}}(a,p,b)\) by the following alternatives:
\begin{enumerate}[label=\textup{(B\arabic*)}]
\item \(a\) is not coherent, with no restriction on \(p,b\);
\item \(a\) is coherent, \(p\notin\mathrm{Dem}\), and \(b\) is null;
\item \(a\) is coherent, \(p\in\mathrm{Dem}\) has label
      \(\lambda=(k,\gamma,s)\), and \(b\in\mathrm{Rec}\) has the same
      label, with the following properties, where
      \(J=J_\lambda(a)\) and \(L=\bat(b)\):
      \begin{enumerate}[label=\textup{(\alph*)}]
      \item \(L\subseteq U_k(s)\setminus J\), and \(J\cup L\) is independent;
      \item \(x\equivh x'\) for all \(x,x'\in L\), and
            \(\widehat k\oplus\gamma\oplus\Cut(s)\leqh\Cut(x)\)
            for every \(x\in L\);
      \item for a spanning demand, \(y\in\cl_\lambda(J\cup L)\);
            for a dimension demand, \(W(\lambda,q,x)\) holds for some \(x\in L\).
      \end{enumerate}
\end{enumerate}

\begin{lemma}\label{lem:basis-selection}
The relation \(\mathscr F_{\mathrm{bas}}\) is lightface \(\Pi^1_1\), and
every section \((\mathscr F_{\mathrm{bas}})_{a,p}\) is cofinal in the
hyperdegrees.
\end{lemma}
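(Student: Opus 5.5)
The proof has two parts: a complexity computation for \(\mathscr F_{\mathrm{bas}}\), and a case-by-case construction of witnesses showing that each section is cofinal. Both parallel Lemma~\ref{cl:selection-lemma}.

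\emph{Definability.} Records, demands and labels form \(\Delta^1_1\) sets, and label, batch and target are uniformly readable from them. The main step is to show that coherence of a history \(a\) is \(\Delta^1_1\). By Lemma~\ref{lem:independence}(a), \(J_\lambda(a)\) is independent if and only if every finite subset is. A finite subset is given by finitely many positions in \(a\) together with indices into the batches of those records. Independence of such a finite set \(F\) says that no \(x\in F\) lies in \(\cl_\lambda(F\setminus\{x\})\), which is \(\Delta^1_1\) by (C3). Only number quantifiers occur, so coherence is \(\Delta^1_1\) by Lemma~\ref{fact:effective}(d). Since \(J\) and \(L\) are handled the same way, (B3)(a) reduces to the arithmetic relation \(U_k(s)\) (Lemma~\ref{lem:approximation}(c)), to ``\(x\notin J\)'' (a number quantifier over entries of \(a\)), and to independence of \(J\cup L\), treated as above. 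Clause (B3)(b) is a finite conjunction of \(\leqh\) statements, each \(\Pi^1_1\) by Lemma~\ref{fact:effective}(a). In (B3)(c), the spanning clause \(y\in\cl_\lambda(J\cup L)\) is an existential number quantifier over finite subsets followed by (C3). The dimension clause is a finite disjunction of instances of the arithmetic relation \(W\). Hence \(\mathscr F_{\mathrm{bas}}\) is a Boolean combination of \(\Delta^1_1\) sets with a \(\Pi^1_1\) conjunct, so it is lightface \(\Pi^1_1\).

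\emph{Cofinality.} Fix \(a\), \(p\) and \(u\). If \(a\) is incoherent, take \(b=u\). If \(a\) is coherent and \(p\notin\mathrm{Dem}\), take \(b=0^\frown u\). Now let \(a\) be coherent and \(p\) a demand with label \(\lambda=(k,\gamma,s)\), and set \(J=J_\lambda(a)\), which is countable and independent. Replace \(u\) by \(u'=u\oplus\widehat k\oplus\gamma\oplus\Cut(s)\).

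For a dimension demand, Lemma~\ref{lem:basis-extension}(b) gives \(x\in U_k(s)\setminus\cl_\lambda(J)\) with \(u'\leqT x\) and \(W(\lambda,q,x)\). Take \(L=\{x\}\). Lemma~\ref{lem:independence}(b) gives independence of \(J\cup L\), and (b) of (B3) holds because \(u'\leqT x\).

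For a spanning demand with target \(y\), there are two cases. If \(y\notin\cl_\lambda(J)\), Lemma~\ref{lem:basis-extension}(a) gives the pair \(L=\{x,x'\}\), and every clause of (B3) is immediate. If \(y\in\cl_\lambda(J)\), the spanning clause holds already, and we adjoin a single point supplied by Lemma~\ref{lem:basis-extension}(b). To make this case concrete, apply that lemma with any \(q\) such that \(r_q^k\ge s\), so that \(W\) holds trivially.

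In every case the record \(b\) is arithmetic in the batch and the label, and each batch point computes \(u'\). Therefore \(u\leqh b\).

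The only delicate point is checking that no hidden real quantifier enters the definability computation. Every clause involving \(J\) or \(L\) must be reduced, through finite character, to number quantifiers over positions in \(a\) and in \(b\), so that (C3) applies only to finite tuples. Once this is done, the remaining steps are bookkeeping.
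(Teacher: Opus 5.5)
Your proposal is correct and follows the paper's proof essentially step for step: coherence and the non-reducibility clauses of (B3) reduce to number quantifiers over positions together with (C3), the \(\leqh\) clauses supply the \(\Pi^1_1\) part, and cofinality uses the same case split with the padded target \(u\oplus\widehat k\oplus\gamma\oplus\Cut(s)\). One small slip: for an already-met spanning demand you cannot always choose \(q\) with \(r_q^k\ge s\), because \(r_q^k\in(0,k)\) and no such code exists when \(s=k\); this is harmless, since (B3)(c) imposes no \(W\)-condition on spanning demands, so an arbitrary code works, as the paper says.
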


\begin{proof}
A position in \(a\) is a pair \((n,\ell)\) with \(n<|a|\),
\(a(n)\in\mathrm{Rec}\), and \(1\le\ell\le a(n)(0)\).
Its point and label are uniformly given by columns of \(a(n)\).
The set of positions is \(\Delta^1_1\), and equality of points or labels is
arithmetic.

For any set of positions given by a \(\Delta^1_1\) condition and having a
common label, its point set \(S\) is independent exactly when, for every
position with point \(x\) and every finite tuple of positions whose points
differ from \(x\), the point \(x\) does not belong to the closure of that
tuple. Finite character and (C3) make this condition \(\Delta^1_1\).
Likewise, \(y\in\cl_\lambda(S)\) is a number-existential condition over
finite tuples of positions and is \(\Delta^1_1\).
It follows that coherence is \(\Delta^1_1\).
The remaining requirements in (B3), except for the hyperarithmetic
reducibility conditions, are \(\Delta^1_1\). Those conditions are
\(\Pi^1_1\) by Lemma~\ref{fact:effective}(a).
All three alternatives therefore define a \(\Pi^1_1\) relation.

For cofinality, fix \(a,p,u\).
If \(a\) is incoherent, take \(b=u\).
If \(a\) is coherent but \(p\notin\mathrm{Dem}\), take \(b=0^\frown u\).
Otherwise let \(\lambda=(k,\gamma,s)\) be the label and
\(J=J_\lambda(a)\), and put
\[
             u^*=u\oplus\widehat k\oplus\gamma\oplus\Cut(s).
\]
Use \(u^*\) as the target real in each of the following applications
of Lemma~\ref{lem:basis-extension}.
For a spanning demand with \(y\notin\cl_\lambda(J)\), use the pair from
Lemma~\ref{lem:basis-extension}(a).
For a spanning demand already met, use any singleton from part (b) of
that lemma, with an arbitrary null-set code.
For a dimension demand use part (b) with its actual code \(q\).
In each case the canonical record for the resulting batch satisfies
(B3), and computes a point which computes \(u^*\), hence \(u\).
\end{proof}

\begin{proof}[Proof of Theorem~\ref{thm:B}]
Apply Theorem~\ref{fact:selection} to \(\mathscr F_{\mathrm{bas}}\).
Let \(Y\) be the resulting lightface \(\Pi^1_1\) set of records, with
enumerations \((p_\xi)_{\xi<\omega_1}\), \((b_\xi)_{\xi<\omega_1}\),
and histories \(a_\xi\) witnessing compatibility.

Every history \(a_\xi\) is coherent.
Indeed, assume this for all earlier histories.
Each earlier record is then null or satisfies (B3).
If \(S\subseteq J_\lambda(a_\xi)\) is finite and nonempty, choose finitely many earlier
batches covering \(S\), and let \(\eta<\xi\) be the last of their stages.
Then
\[
             S\subseteq J_\lambda(a_\eta)\cup\bat(b_\eta),
\]
an independent set by (B3)(a).
Thus every finite subset of \(J_\lambda(a_\xi)\) is independent, so the
whole set is independent by Lemma~\ref{lem:independence}.
This proves the induction.

For every label \(\lambda\), put
\[
 B_\lambda=
 \bigcup\{\bat(b):b\in Y\cap\mathrm{Rec},\ \lab(b)=\lambda\}
 =\bigcup_{\xi<\omega_1}J_\lambda(a_\xi).
\]
It is independent as the union of a chain of independent sets.
All its points lie in \(U_k(s)\), so
\(\dimH B_\lambda\le s\).
Every spanning demand is met at its stage, giving
\(\cl_\lambda(B_\lambda)=\R^k\).
Every dimension demand with \(r_q^k<s\) contributes a point outside
\(N_k(q)\). Corollary~\ref{cor:dimension-test} therefore yields
\(\dimH B_\lambda\ge s\).

It remains to check joint definability.
For \(v=\widehat k\oplus\gamma\oplus\Cut(s)\oplus\Cut(x)\), we have
\begin{equation}\label{eq:basis-bounded}
 x\in B_{(k,\gamma,s)}
 \quad\Longleftrightarrow\quad
 (\exists b\leqh v)\,
 [b\in Y\cap\mathrm{Rec},\
   \lab(b)=(k,\gamma,s),\ x\in\bat(b)].
\end{equation}
The right-to-left implication is immediate.
For the converse, choose a record whose batch contains \(x\).
By (B3)(b), every other point of that batch is hyperarithmetic in \(x\).
The canonical record is arithmetic in its label and the finite batch,
so it is hyperarithmetic in \(v\).
The matrix of \eqref{eq:basis-bounded} is \(\Pi^1_1\).
Lemma~\ref{fact:effective}(b) proves that the membership relation, jointly
in the label and the point, is lightface \(\Pi^1_1\).
This is the required set \(\mathcal G\).

If the family is given relative to a real \(\delta\), the same argument
works with \(\delta\) added as a parameter throughout: we use the
selection theorem relative to \(\delta\), and in the cofinality proof
of Lemma~\ref{lem:basis-selection} we use \(u^*\oplus\delta\) as the
target real. Replacing \(v\) by \(\delta\oplus v\) in
\eqref{eq:basis-bounded} and applying Lemma~\ref{fact:effective}(b)
relative to \(\delta\) gives \(\mathcal G\in\Pi^1_1(\delta)\).
\end{proof}

\begin{proof}[Proof of Corollary~\ref{cor:bases}]
For every \(n\ge1\), consider the pregeometry
\[
              \operatorname{lin}^n(A)=\spanop_{\Q}(A),
              \qquad A\subseteq\R^n,
\]
and, identifying \(\C\) with \(\R^2\), consider
\[
 \alg(A)=\{z\in\C:z\text{ is algebraic over }\Q(A)\},
 \qquad A\subseteq\C.
\]
Their bases are precisely the Hamel bases over \(\Q\) and the
transcendence bases of \(\C/\Q\), respectively.
Both closures contain sums of two input points and take countable
sets to countable sets: there are only countably many finite rational
linear combinations, and only countably many polynomials over a
countable field, each nonzero polynomial having finitely many roots.
Thus (C1) and (C2) hold.

For (C3), finite linear-span membership is given by
\[
 y\in\spanop_{\Q}\{x_1,\ldots,x_m\}
 \quad\Longleftrightarrow\quad
 (\exists r_1,\ldots,r_m\in\Q)\;
 y=\sum_{i=1}^{m}r_ix_i.
\]
Similarly, \(y\in\alg(\{x_1,\ldots,x_m\})\) if and only if there is
\(P\in\Q[X_1,\ldots,X_m,Y]\) such that
\[
              P(\bar x,Y)\ne0\text{ in }\C[Y],
              \qquad P(\bar x,y)=0,
\]
as follows by clearing denominators in \(\Q(\bar x)[Y]\).
Real and imaginary parts of complex rational polynomials are rational
polynomials in the real coordinates, so Lemma~\ref{lem:cuts} makes
these tests arithmetic, uniformly in the finite tuples and the
ambient dimension; the displayed quantifiers range over countable
sets with computable enumerations.

Let \(\gamma_0\in2^\omega\) be the constant zero sequence.
Apply Theorem~\ref{thm:B} to the linear-span family with label set
\(\{(n,\gamma_0):n\ge1\}\), and to the algebraic-closure family with
the single label \((2,\gamma_0)\).
Taking sections at the computable parameter \(\gamma_0\) gives the
required lightface \(\Pi^1_1\) relations \(H(n,s,x)\) and \(T(s,z)\),
whose sections are \(\Pi^1_1(s)\).
\end{proof}

\section{Remarks and questions}\label{sec:remarks}

\begin{remark}[Closure under compositions and computability]
\label{rem:closure-scope}
Condition (F3) applies to the entire composition-closed family.
On \(\R^2\), addition and \(T(x_1,x_2)=(x_1,-x_2)\) separately
satisfy the condition on unary sections, but the generated map
\(x\mapsto x+T(x)=(2x_1,0)\) fails (F3).

Condition (F3) cannot be omitted in general, as the following
zero--one law illustrates.
By \cite[Theorem~1.1 and Corollary~1.2]{Wan26}, a subring of \(\R\)
closed under binary interleaving for every choice of binary expansions
has Hausdorff dimension \(0\) or \(1\); so does the field of reals
computable from members of a Turing ideal. Thus, for \(0<\alpha<1\),
the subfields of Corollary~\ref{cor:subfields} cannot be closed under
binary interleaving or arise from Turing ideals. Condition (F2) only
requires effective evaluation of the specified operations.
\end{remark}

\begin{remark}[Bases without definability requirements]\label{rem:zfc-bases}
The existence of bases of prescribed Hausdorff dimension, without a
definability requirement, is already provable in ZFC.
For Hamel bases of \(\R\) over \(\Q\), this is the result of Lutz, Qi and
Yu \cite{LQY24}.
More generally, suppose \(\cl\) is a pregeometry on \(\R^k\) satisfying
\[
 a+b\in\cl(\{a,b\}),\qquad
 |\cl(A)|\le |A|+\aleph_0
 \quad(A\subseteq\R^k).
\]
Fix \(s\in[0,k]\), enumerate all spanning demands and all dimension
demands with \(r_q^k<s\) in order type \(2^{\aleph_0}\), and run the
construction of Section~\ref{sec:bases}, omitting the coding requirements.
At any stage the previously chosen independent set has size less than
\(2^{\aleph_0}\), and so does its closure.
For a spanning demand the available perfect set has size \(2^{\aleph_0}\).
For a dimension demand, the Borel set \(K\setminus N_k(q)\) has positive
\(\Haus^t\)-measure for some \(t>0\), hence is uncountable and has size
\(2^{\aleph_0}\) by the perfect set theorem for Borel sets.
Thus the necessary points can always be chosen.
The resulting basis has dimension \(s\).
The contribution of Theorem~\ref{thm:B} is its coanalytic definability
and simultaneous uniformity.

For a finite-character closure, countability of finite closures implies
the displayed cardinal bound. It applies to all the pregeometries in
Definition~\ref{def:pregeometry-family}.
The Euclidean function-closure construction in Theorem~\ref{thm:A} has
a different requirement: it controls all unary vector sections over
every finite tuple of earlier generators. Its simultaneous Cantor
argument uses the countability of that earlier parameter set.
\end{remark}

\begin{remark}[A contrasting consistency statement]\label{rem:solovay}
Relative to an inaccessible cardinal, it is consistent with ZFC that
there are no projective proper subfields of \(\R\) of positive Hausdorff
dimension and no projective bases of the kinds in
the Hamel and transcendence applications of Theorem~\ref{thm:B}.
Indeed, let \(\kappa\) be inaccessible and force with
\(\operatorname{Coll}(\omega,{<}\kappa)\).
In the resulting extension, the Hausdorff-measure regularity results of
Farah and Zapletal \cite[Corollaries~5.12 and~5.21]{FZ06}
give a compact positive-dimensional subset of every positive-dimensional
set of reals definable from real and ordinal parameters.
If such a set is a subring, the ring generated by this compact subset
is a \(\sigma\)-compact positive-dimensional subring, and is therefore
\(\R\) by \cite{EM03}.
To exclude a real-and-ordinal definable Hamel basis \(B\) of \(\R^n\)
over a countable field \(K\), fix \(b\in B\) and put
\(H=\spanop_K(B\setminus\{b\})\). This is a real-and-ordinal definable
additive subgroup, and its cosets \(qb+H\), \(q\in K\), cover \(\R^n\).
Thus \(H\) is nonmeager. Solovay's theorem \cite{Sol70} gives it the
Baire property, and Pettis' theorem \cite[Theorem~9.9]{Kec95} makes it
open. The connectedness of \(\R^n\) then forces \(H=\R^n\), contrary
to the independence of \(B\).
The transcendence-basis assertion follows from
\cite[Theorem~12.3.12]{LZ20}, using the trivial forcing over the
associated symmetric Solovay inner model, which has the same reals and
contains the real-and-ordinal definable sets.
Every projective set is real-and-ordinal definable.

Consequently, assuming the consistency of an inaccessible cardinal,
the conclusions of the two main theorems cannot be proved in ZFC
without their constructibility hypothesis.
The same applies to Corollary~\ref{cor:subfields} and
Corollary~\ref{cor:exponential}, since they concern proper subfields of
\(\R\) of positive Hausdorff dimension.
\end{remark}

\begin{remark}[Thin sections]\label{rem:thin}
A set is thin if it contains no nonempty perfect subset.
Under \(V=L\), Vidny\'anszky's method can select the record set inside
the self-constructible reals
\[
                  \mathcal S=\{a\in\omega^\omega:a\in L_{\omega_1^a}\},
\]
which form a thin set \cite[Section~5.2]{Vid14},
\cite[Theorem~(2A-1)]{Kec75}.
For the bases of Theorem~\ref{thm:B}, condition (B3)(b) makes every
point of a batch hyperarithmetically compute its label, and makes all
points in the batch hyperarithmetically equivalent. Hence the canonical
record and each point in its batch are hyperarithmetically equivalent,
for arbitrary labels \((k,\gamma,s)\). Since self-constructibility is
invariant under hyperarithmetic equivalence, the cuts of all points
in every basis section belong to \(\mathcal S\).

The sets of Theorem~\ref{thm:A} can be made thin for every label
\(\iota=(n,\gamma)\) as well. Let
\(y\in A_\iota(\alpha)\setminus\cl_\iota(\varnothing)\), and let
\(b_\xi\) be the last record in a minimal support of \(y\), with point
\(z\). Then \(b_\xi\leqh\Cut(y)\) by Lemma~\ref{cl:finite-support}.
Conversely, since \(z\notin\cl_\iota(\varnothing)\), (P3)(c) applied
to the empty tuple and to \(y=z\) shows that the whole history
\(A_\xi\) is hyperarithmetic in \(\Cut(z)\), hence in \(b_\xi\).
So are all the generators in the support of \(y\). The label is also
read from \(b_\xi\), so the \(\Delta^1_1\) evaluation relation gives
\(\Cut(y)\leqh b_\xi\). Thus \(\Cut(y)\equivh b_\xi\), and the cuts of
\(A_\iota(\alpha)\setminus\cl_\iota(\varnothing)\) belong to
\(\mathcal S\).
The map \(x\mapsto\Cut(x)\) is Borel and injective, so the image of a
perfect subset of \(\R^n\) would be an uncountable analytic set, and
would contain a perfect set \cite{Kec95}. Hence
\(A_\iota(\alpha)\setminus\cl_\iota(\varnothing)\) is thin, and so is
\(A_\iota(\alpha)\), since \(\cl_\iota(\varnothing)\) is countable.
The same cut-map argument proves that every basis section is thin.
In particular, \(A_\iota(n)\) is a thin set of full Hausdorff
dimension \(n\) for every label \(\iota=(n,\gamma)\).
The argument relativizes to a fixed real parameter by using the
corresponding relative self-constructible reals.
\end{remark}

\begin{question}\label{q:perfect}
	Assume \(V=L\).
	Can the coanalytic Hamel and transcendence bases above be chosen to
	contain a nonempty perfect subset while retaining any prescribed
	Hausdorff dimension in their respective ranges?
	Can proper subfields of \(\R\), or real closed ones, be so chosen at
	every positive dimension?
	Compare \cite[Problem~5.8]{Vid14}.
\end{question}

\begin{question}\label{q:from-one}
	Over ZFC, does the existence of a coanalytic Hamel basis of \(\R\)
	over \(\Q\) imply the existence of such bases of every Hausdorff dimension
	in \([0,1]\)?
	Does the existence of a coanalytic proper subfield of \(\R\) of positive
	Hausdorff dimension imply that every \(\alpha\in(0,1]\) is realized by
	a proper coanalytic subfield?
\end{question}

\begin{remark}[The thin basis case]
	The Hamel-basis part of Question~\ref{q:from-one} has an affirmative
	answer if the given coanalytic basis is thin.
	If \(B\) is a thin \(\Pi^1_1(a)\) Hamel basis, the relativized
	Mansfield--Solovay theorem \cite[Theorem~7]{Man70} yields \(B\subseteq L[a]\).
	Hence \(\R=\spanop_{\Q}(B)\subseteq L[a]\).
	The selection method and the proofs relativize to the constructible
	universe with parameter \(a\). Apply them inside \(L[a]\).
	That model and the ambient universe have the same reals.
	The coanalytic definitions and the algebraic properties are therefore
	absolute, and the Hausdorff dimensions agree because countable
	rational-cube covers are coded by reals.
	This yields coanalytic Hamel bases of every dimension, and also the
	field constructions above, allowing the parameter \(a\).
	The same argument applies to thin coanalytic transcendence bases of
	\(\C\) over \(\Q\).
\end{remark}

\begin{question}\label{q:measure}
Assume \(V=L\). For every \(n\ge1\), every countable subfield
\(K\subseteq\R\), and every \(0<s<n\), is there a coanalytic Hamel
basis \(B\) of \(\R^n\) over \(K\) such that
\[
                    \dimH B=s,\qquad 0<\Haus^s(B)<\infty?
\]
\end{question}

By left continuity, the sets in Theorem~\ref{thm:A} already satisfy
\(\Haus^\alpha(A_\iota(\alpha))=0\) for every \(0<\alpha\le n\), since
each rational section in the defining union has dimension below \(\alpha\).

For subrings, finite positive Hausdorff measure at a positive exponent
is impossible: well-separated translates of a bounded positive-measure
part would force infinite measure.
For a coanalytic algebraically independent set \(B\subseteq\C\), the
same requirement would give a positive-dimensional compact subset
\(P\subseteq B\), by universal measurability and inner regularity on a
finite-measure Borel hull. The ring \(\Z[P]\) is then \(\sigma\)-compact,
so \cite[Theorem~2]{EM03} would force \(\Z[P]=\R\) or \(\C\), contrary
to \(1/2\notin\Z[P]\).

\begin{question}\label{q:packing}
Assume \(V=L\). For each of the following kinds of sets, which pairs
\(0\le s\le t\le d\) occur as the Hausdorff and packing dimensions
of a coanalytic example: Hamel bases of \(\R\) over \(\Q\), proper
subrings of \(\R\), or transcendence bases of \(\C\) over \(\Q\),
where \(d=1\) for the first two kinds and \(d=2\) for the third?
\end{question}

\subsection*{Acknowledgements}
In preparing this manuscript, the author used generative AI tools to
refine the exposition, explore mathematical ideas, develop and check
proofs, and identify relevant literature.
In particular, the Danus system was used in the development of the
coded Cantor-set constructions in Propositions~\ref{prop:general-cantor}
and~\ref{prop:reflection-cantor}.
The author takes full responsibility for the manuscript.

\end{document}